\newtheorem{theorem}{Theorem}[section]
\newtheorem{lemma}[theorem]{Lemma}
\newtheorem{corollary}[theorem]{Corollary}
\newtheorem{proposition}[theorem]{Proposition}
\theoremstyle{definition}
\newtheorem{notation}[theorem]{Notation}
\newtheorem*{notation*}{Notation and conventions}
\newtheorem{remark}[theorem]{Remark}
\def\C{{\mathbb C}}
\def\P{{\mathbb P}}
\def\Q{{\mathbb Q}}
\def\R{{\mathbb R}}
\def\Z{{\mathbb Z}}
\def\cC{{\mathcal C}}
\def\cD{{\mathcal D}}
\def\cE{{\mathcal E}}
\def\cL{{\mathcal L}}
\def\cN{{\mathcal{N}}}
\def\cO{{\mathcal{O}}}
\def\cV{{\mathcal V}}
\def\cY{{\mathcal Y}}
\def\Q{{\mathbb{Q}}}
\def\fg{{\mathfrak g}}
\def\fh{{\mathfrak h}}
\def\fp{{\mathfrak p}}
\def\fb{{\mathfrak b}}
\def\fa{{\mathfrak a}}
\def\fd{{\mathfrak d}}
\def\fe{{\mathfrak e}}
\def\ff{{\mathfrak f}}
\def\fsl{{\mathfrak sl}}
\def\operatorname#1{\mathop{\rm #1}\nolimits}
\def\DA{{\rm A}}
\def\DB{{\rm B}}
\def\DC{{\rm C}}
\def\DD{{\rm D}}
\def\DE{{\rm E}}
\def\DF{{\rm F}}
\def\DG{{\rm G}}
\def\Aut{\operatorname{Aut}}
\def\CAut{\operatorname{CAut}}
\def\Hom{\operatorname{Hom}}
\def\Pic{\operatorname{Pic}}
\def\Hom{\operatorname{Hom}}
\def\codim{\operatorname{codim}}
\def\rank{\operatorname{rank}}
\def\det{\operatorname{det}}
\def\ss{\operatorname{ss}}
\def\Ad{\operatorname{Ad}}
\def\ad{\operatorname{ad}}
\def\Lie{\operatorname{Lie}}
\def\PGL{\operatorname{PGL}}
\def\SL{\operatorname{SL}}
\def\Hex{\operatorname{Hex}}
\def\SO{\operatorname{SO}}
\def\Sp{\operatorname{Sp}}
\newcommand{\pb}{\ar@{}[dr]|{\text{\pigpenfont J}}}
\def\Mo{\operatorname{\hspace{0cm}M}}
\newcommand\ignore[1]{}
\DeclareMathOperator{\HH}{H}
\DeclareMathOperator{\mult}{mult}
\newcommand\CC{{\mathbb{C}}}
\newcommand\PP{{\mathbb{P}}}
\newcommand\RR{{\mathbb{R}}}
\newcommand\ZZ{{\mathbb{Z}}}
\newcommand\g{{\mathfrak g}}
\newcommand\ra{{\ \rightarrow\ }}
\newcommand\lra{\longrightarrow}
\title%[Higher rank torus actions]
{High rank torus actions on contact manifolds}
\author[Occhetta]{Gianluca Occhetta}
\address{Dipartimento di Matematica, Universit\`a degli Studi di Trento, via
Sommarive 14 I-38123 Povo di Trento (TN), Italy}
\email{gianluca.occhetta@unitn.it, eduardo.solaconde@unitn.it}
\author[Romano]{Eleonora A. Romano}
\address{Instytut Matematyki UW, Banacha 2, 02-097 Warszawa, Poland}
\email{J.Wisniewski@uw.edu.pl, elrom@mimuw.edu.pl}
\author[Sol\'a Conde]{Luis E. Sol\'a Conde}
\author[Wi\'sniewski]{Jaros\l{}aw A. Wi\'sniewski}
\subjclass[2010]{Primary 14L30; Secondary 14M17, 14M25}
\thanks{First and third author supported by PRIN project ``Geometria delle variet\`a algebriche''. First, second and third author supported by MIUR project FFABR. Second and fourth author supported by Polish National Science Center project 2016/23/G/ST1/04282. The results in this paper were partially obtained while the second and fourth author visited the University of Trento. }
\begin{document}
\maketitle

\begin{abstract}
We prove LeBrun--Salamon conjecture in the following situation: if $X$ is a contact Fano manifold of dimension $2n+1$ whose group of automorphisms is reductive of rank $\geq \max(2,(n-3)/2)$ then $X$ is the adjoint variety of a simple group. The rank assumption is fulfilled not only by the three series of classical linear groups but also by almost all the exceptional ones.
\end{abstract}

%%%%%%%%%%%%%%%%%%%%%%%

\section{Introduction}\label{sec:intro}

A fundamental result of Riemannian geometry is the classification of manifolds with a metric according to their holonomy groups, by De Rham and Berger, \cite{Berger}. One of the classes of Riemannian manifolds in this classification are quaternion-K{\"a}hler manifolds which are of (real) dimension $4n$, with $n\geq 2$, and holonomy group $\Sp(n)\Sp(1)=[\Sp(n)\times \Sp(1)]/\ZZ_2\subset \SO(4n,\RR)$. Salamon proved that the twistor space of a compact positive quaternion-K{\"a}hler manifold is a (complex) contact Fano manifold of (complex) dimension $2n+1$ and admits a K{\"a}hler-Einstein metric, \cite{Salamon}; a converse statement holds by a result of LeBrun, \cite{LeBrun}. A celebrated LeBrun--Salamon conjecture, \cite{SaLe}, asserts that every compact positive quaternion-K{\"a}hler manifold is symmetric or, equivalently, every contact Fano manifold with K{\"a}hler-Einstein metric is a homogeneous space. In fact, quaternion-K{\"a}hler symmetric spaces are known as Wolf spaces and, equivalently, homogeneous complex contact manifolds are known to be the closed orbits in the projectivizations of adjoint representations of simple algebraic groups; we call them adjoint varieties.

In the absence of a proof of LeBrun--Salamon conjecture in its full generality, low-dimensional cases have been verified for $n\leq 4$. That is, the conjecture was proved for positive quaternion-K{\"a}hler manifolds of (real) dimension $\leq 16$ and for contact Fano manifolds with a K{\"a}hler-Einstein metric of dimension $\leq 9$, see \cite{BWW} and references therein. Also, the conjecture is known for contact Fano manifolds $X$ (even without assuming that they admit a K{\"a}hler-Einstein metric) if the first Chern class of the quotient $L=TX/F$ of the contact distribution $F\hookrightarrow TX$ does not generate the second cohomology $\HH^2(X,\mathbb{Z})$, \cite{KPSW}. In fact, if the class is divisible in $\HH^2(X,\mathbb{Z})$ then $X$ is known to be the projective space $\mathbb{P}^{2n+1}$, hence the closed orbit in the adjoint representation of $\Sp(2n+2,\mathbb{C})$. On the other hand, if the rank of the second cohomology is $\geq 2$, then $X$ is the (naive) projectivization of the cotangent bundle on $\mathbb{P}^{n+1}$, hence the closed orbit in the adjoint representation of $\SL(n+2,\mathbb{C})$.

LeBrun--Salamon conjecture was proved in some cases with additional assumptions on group actions. Bielawski \cite{Bielawski} and  Fang, \cite{Fa1, Fa2},  considered quaternion-K{\"a}hler manifolds with an action of a (real) torus.  It is known, \cite{Matsushima}, that the automorphisms group of a Fano manifold with a K{\"a}hler-Einstein metric is reductive, which is an equivalent of a real compact group in the complex case. Thus the automorphisms of such a variety contain an algebraic torus; the rank of a maximal subtorus is, by definition, the rank of the reductive group.  The results of Fang imply that contact Fano manifolds with K{\"a}hler-Einstein metric admitting an action of an algebraic torus of rank $r \geq n/2+3$ are adjoint varieties of the simple groups $\Sp(2n+2,\mathbb{C})$ and $\SL(n+2,\mathbb{C})$, which are of rank $n+1$. We note that the condition $r\geq n/2+3$ is {\em not} fulfilled for the group $\SO(n+4,\mathbb{C})$ which is of rank $\lfloor n/2\rfloor+2$.

The main theorem of the present paper,  Theorem \ref{thm:LBFang}, improves previous results so that not only it covers the adjoint varieties of all the classical series of linear groups but also almost all the exceptional ones. That is, if $X$ is a contact Fano manifold of dimension $2n+1$ whose group of automorphisms is of rank $\geq \max(2,(n-3)/2)$ then $X$ is the adjoint variety of one of the classical linear groups $\SL(n+2,\mathbb{C})$, i.e.,~type $\DA$, $\Sp(2n+2,\mathbb{C})$, i.e.,~type $\DC$, $\SO(n+4,\mathbb{C})$, i.e., type $\DB$ or $\DD$, or a simple group of type $\DG_2, \DF_4, \DE_6, \DE_7$.\par\medskip

\begin{figure}[h!!]
\includegraphics[width=10cm]{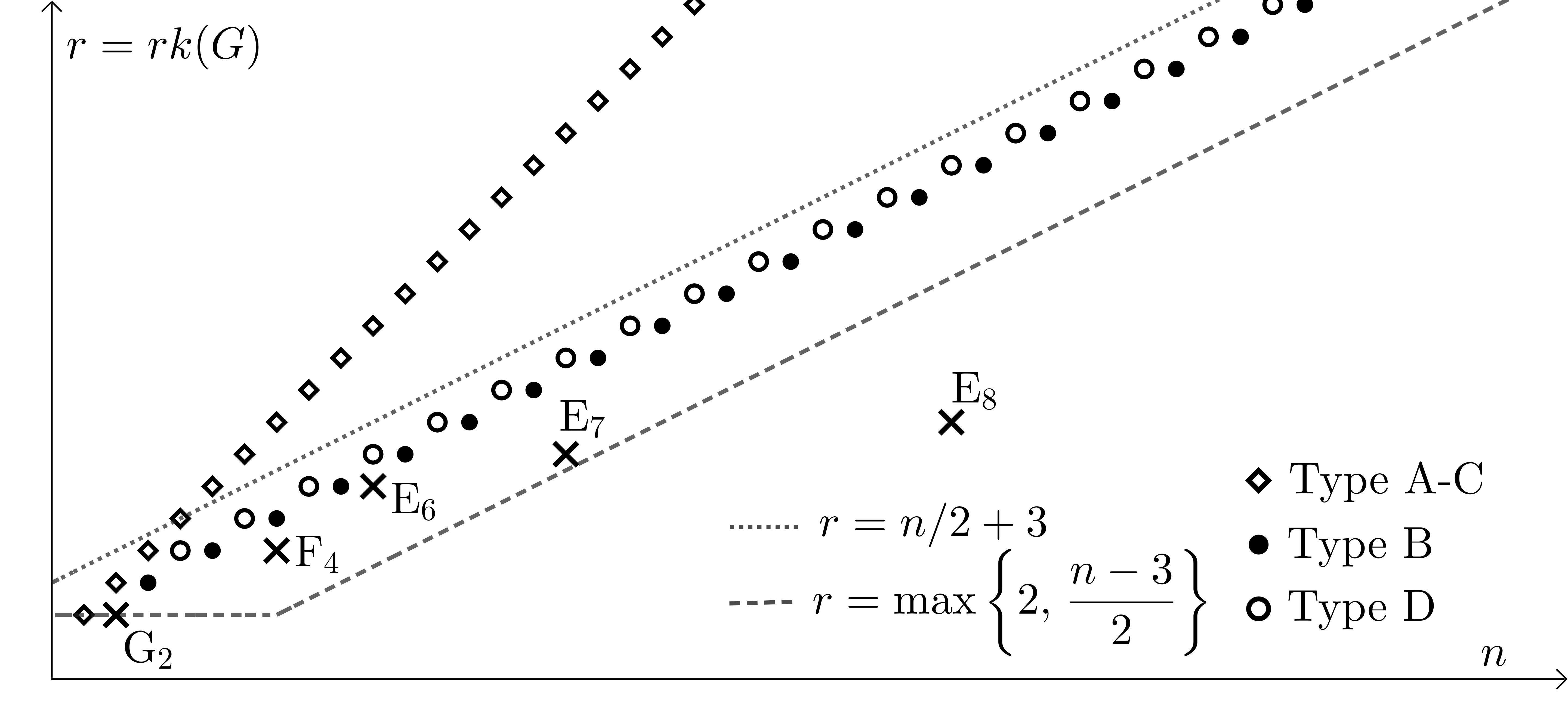}
\caption{\label{fig:fang} Comparing rank and dimension of adjoint varieties for simple groups. The line below is the bound of Theorem \ref{thm:LBFang};
the line above indicates previously known bounds.
}
\end{figure}

Although motivated by a problem from Riemannian geometry, the present paper depends solely on methods from algebraic geometry. Thus our language and formulation of results is provided in terms specific for algebraic group actions, especially algebraic torus actions, and complex birational geometry. In fact, Theorem \ref{thm:LBFang} follows from Theorem \ref{thm:LB_isolated_pts}, which is the technical core of the present paper. Theorem \ref{thm:LB_isolated_pts} asserts that a contact Fano manifold is the adjoint variety of one of the simple groups if the group of its automorphisms is reductive of rank $\geq 2$ and, in addition, the action of its Cartan torus has only isolated {\em extremal fixed points}. The arguments in the proof of this result are about the action of a suitably chosen two dimensional subtorus in the Cartan torus in question. This {\em downgrading} of the torus action and suitable results about {\em adjunction theory} for varieties with a $\CC^*$-action, \cite{RW}, and about the birational  geometry of {\em small bandwidth} $\CC^*$-actions, \cite{ORSW}, enable to identify the variety as the adjoint variety of a simple group.

%%%%%%%%%%%%%%%%%%%%%%%%%%%%%%%%%%%%%%

The paper is organized as follows. Section 2 contains general results about algebraic groups and their actions, while Section 3 concentrates on homogeneous manifolds. In particular, we deal with adjoint varieties of simple algebraic groups. As the main result of the present paper covers the case of adjoint varieties of most exceptional algebraic groups, a significant part of its contents concerns properties specific for these groups, see e.g.~\cite{LM01,KaYa}. However, in our approach, the classical linear group series $\SO(n+4,\CC)$, that is the groups of type $\DB$ and $\DD$, are treated on equal terms with exceptional groups. This is because the Picard groups of their adjoint varieties are generated by the class of the quotient $L=TX/F$, contrary to the other two types of classical linear groups. This allows us to restate geometrical properties of adjoint varieties related to the so-called Freudenthal magic square in a context appropriate for our purposes.

Section 4 summarizes specific results concerning torus actions on contact manifolds, as in \cite[Section~5]{BWW},  while Section 5 contains the proof of Theorem \ref{thm:LB_isolated_pts}, the main technical result of the paper. For clarity, the proof is divided into five steps; a short guide through the proof is provided after the formulation of the theorem. Finally, Section 6 contains the proof of Theorem \ref{thm:LBFang}.
\medskip

\noindent{\bf Acknowledgements:} The authors would like to thank A. Weber and J. Buczy\'nski for their interesting comments and for providing bibliographic references. They are also grateful to Willem de Graaf for his comments on Lie algebras.

%%%%%%%%%%%%%%%%%%%%%%%

%%%%%%%%%%%%%%%%%%%%%%%

\section{Preliminaries}\label{sec:prelim}

This section contains some background material and language we will use later on. Let us start by introducing some general notation.

\begin{notation*} Throughout this paper all the varieties will be algebraic, projective and defined over the field of complex numbers, mostly smooth (in this case we will also refer to them as projective manifolds). Given a vector bundle $\cE$ on a variety $X$, we will denote by $\P(\cE)$ the quotient of the complement of the zero section of $\cE$ by the action of homotheties.  The Lie groups $\SL(n,\C),\SO(n,\C),\dots$ will be denoted by $\SL(n),\SO(n),\dots$ and considered with their structure of algebraic groups. Given a semisimple group $G$, a {\em rational homogeneous $G$-variety} is a projective variety of the form $G/P$; a subgroup $P\subset G$ for which $G/P$ is projective is called {\em parabolic}. Such a variety is completely determined by combinatorial data: the Dynkin diagram $\cD$ of $G$ and a subset $I$ of the set $D$ of nodes in $\cD$ (of cardinality equal to the Picard number of $G/P$). More concretely, up to conjugacy, the parabolic subgroup $P$ can always be written as $BW(D\setminus I)B$, where $B\subset G$ is a Borel subgroup, and $W(D\setminus I)$ is the subgroup of the Weyl group $W$ of $G$ generated by the reflections associated to the nodes of $D\setminus I$.  
We will then write (see \cite[Section~2.4]{ORSW} for details and examples):
$$
\cD(I):=G/BW(D\setminus I)B.
$$  
\end{notation*}

\subsection{Torus actions on smooth projective varieties}\label{ssec:prelimXH}

We will briefly recall here some general facts on torus actions on smooth complex projective varieties. We refer the interested reader to \cite{BWW} for a complete account on the concepts and tools that we introduce here. 

Throughout the section we will denote by $(X,L)$  a {\em polarized pair}, consisting of a smooth complex projective variety $X$ and an ample line bundle $L$ on $X$, and by $H\simeq {(\C^*)}^{r}$ an $r$-dimensional complex torus, acting effectively on $X$. The {\em lattice of weights} of $H$ will be denoted by $\Mo(H):=\Hom(H,\C^*)$.

The set of points of $X$  fixed by the action of $H$, denoted by $X^H$, is a union of smooth closed irreducible subvarieties, indexed by a finite set $\cY$:
$$
X^H=\bigsqcup_{Y\in \cY} Y.
$$

Given a {\em linearization of the action of} $H$ on the line bundle $L$, $\mu_L\colon H\times L\rightarrow L$, the weight of the action of $H$ on the fiber $L_x$ on a fixed point $x$ depends only on the fixed point component $Y$ containing $x$, and we denote it, abusing notation, by $\mu_L(Y)$.   
  
The {\em polytope of fixed points} $\bigtriangleup(X,L,H,\mu_L)$ (respectively, the {\em polytope of sections} $\Gamma(X,L,H,\mu_L)$) is defined as the convex hull in $\Mo(H)\otimes \mathbb{R}$ of the weights of the action of $H$ on $L$ (respectively, of the weights of the action of $H$ on $H^{0}(X,L)$).  We refer to \cite[Section 2.1]{BWW} for details. A fixed point component $Y\in\cY$ for which $\mu_L(Y)$ is a vertex of $\bigtriangleup(X,L,H,\mu_L)$ will be called an {\em extremal fixed point component}.  
  
We will sometimes consider the restriction of the action of $H$ on $(X,L)$ to a subtorus $H'\subset H$, a process that we call {\em downgrading}. We denote by $\imath:H'\to H$ the inclusion, by $\imath^*:\Mo(H)\to\Mo(H')$ the induced map between the lattices of weights, by  $\cY'$ the set of fixed point components of the induced action of $H'$ on $X$, and by $\mu'_L\colon \cY'\to \Mo(H')$ the map associating to every $H'$-fixed point component $Y$ the $H'$-weight of $L$ on every point of $Y$. The following straightforward lemma describes how the weights of the action behave with respect to the downgrading:

\begin{lemma}\label{lem:downgrading}
With the above notation:
\begin{enumerate}[(i)]
\item every $Y\in \cY$ is contained in a unique fixed point component $Y'\in \cY'$, and $\mu'_L(Y')=\imath^*\mu_L(Y)$;
\item every $Y'\in \cY'$ is invariant by $H$, and contains at least a fixed point component $Y\in \cY$;
\item given $Y'\in \cY'$, the torus $H/H'$ acts on $Y'$ with fixed point locus $(Y')^{H/H'}=$ $X^H\cap Y'=Y'^H$; 
\item $\bigtriangleup(X,L,H',\mu'_L)=\imath^*\!\!\bigtriangleup\!(X,L,H,\mu_L)$;
\item for every $Y'\in \cY'$, $\bigtriangleup(Y',L,H/H',{\mu_L})
\subseteq\bigtriangleup(X,L,H,\mu_L)\cap (\imath^{*})^{-1}\mu'_L(Y')$.
\end{enumerate} 
\end{lemma}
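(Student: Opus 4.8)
The plan is to establish the five assertions in the order given, deducing the later ones from the earlier ones, since (iv) and (v) are formal consequences of (i)--(iii) together with the linearity of $\imath^*$ and the fact that a linear map carries convex hulls to convex hulls. The only genuine input beyond bookkeeping is the existence of fixed points guaranteed by Borel's theorem, which I would use in (ii), plus a careful reading of the conventions behind the polytope symbols in (iv) and (v).

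For (i), I would start from the inclusion $X^H\subseteq X^{H'}$, which holds because $H'\subseteq H$. Any $Y\in\cY$ is irreducible, hence connected, so it is contained in a unique connected component $Y'\in\cY'$ of $X^{H'}$. For the weight equality, pick $x\in Y$: by definition $H$ acts on the fibre $L_x$ through the character $\mu_L(Y)$, so $H'$ acts on $L_x$ through its restriction $\imath^*\mu_L(Y)$; since $x\in Y'$ this restricted weight is by definition $\mu'_L(Y')$.

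For (ii), the key point is that $H$ is abelian, so it commutes with $H'$ and therefore preserves the fixed locus $X^{H'}$; as $H$ is connected, it must stabilise each of the connected components $Y'$. Thus $H$ acts on the nonempty projective variety $Y'$ (closed in $X$, and smooth as a fixed component), and Borel's fixed point theorem yields a point $x\in Y'$ fixed by the torus $H$. Such $x$ lies in $X^H$, hence in some $Y\in\cY$, and by the uniqueness in (i) this $Y$ is contained in $Y'$. Assertion (iii) is then immediate: since $H'$ fixes $Y'$ pointwise, the $H$-action on $Y'$ factors through $H/H'$, and a point of $Y'$ is $H/H'$-fixed precisely when it is $H$-fixed, giving $(Y')^{H/H'}=Y'\cap X^H=Y'^H$.

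Finally I would derive the two polytope identities. For (iv), linearity of $\imath^*$ gives $\imath^*\bigtriangleup(X,L,H,\mu_L)=\conv\{\imath^*\mu_L(Y):Y\in\cY\}$; by (i) each $\imath^*\mu_L(Y)$ equals $\mu'_L(Y')$ for the component $Y'\supseteq Y$, and by (ii) every $Y'\in\cY'$ occurs this way, so the right-hand set is exactly $\{\mu'_L(Y'):Y'\in\cY'\}$ and its convex hull is $\bigtriangleup(X,L,H',\mu'_L)$. For (v), by (iii) the $H/H'$-fixed components of $Y'$ are the $Y\in\cY$ with $Y\subseteq Y'$, so $\bigtriangleup(Y',L,H/H',\mu_L)$, read inside $\Mo(H)\otimes\R$ through the full weights $\mu_L(Y)$, is the convex hull of these points. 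Each $\mu_L(Y)$ already lies in $\bigtriangleup(X,L,H,\mu_L)$, and by (i) satisfies $\imath^*\mu_L(Y)=\mu'_L(Y')$, hence lies in the affine fibre $(\imath^*)^{-1}\mu'_L(Y')$; since both this fibre and $\bigtriangleup(X,L,H,\mu_L)$ are convex, so is their intersection, and it contains the convex hull of the $\mu_L(Y)$, which is the claim. The only place demanding real care is matching conventions in (v), where the $H/H'$-polytope must be understood as sitting in the affine slice of $\Mo(H)\otimes\R$ over $\mu'_L(Y')$ rather than in $\Mo(H/H')\otimes\R$; the main (mild) obstacle is thus conventional rather than mathematical, the only substantive tool being Borel's fixed point theorem in (ii).
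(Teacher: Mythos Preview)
Your proof is correct. The paper states this lemma as ``straightforward'' and gives no proof at all, so there is nothing to compare your argument against; your write-up supplies exactly the routine verification the authors omit, with Borel's fixed point theorem in (ii) being the only nontrivial ingredient, as you note.
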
 

Given a fixed point component $Y\in \cY$, the action of $H$ on the normal bundle $\cN_{Y|X}$ of $Y$ in $X$ provides a splitting of this bundle into eigen-subbundles:
$$
\cN_{Y|X}=\bigoplus_i \cN^{-\nu_i(Y)}(Y),
$$
whose corresponding weights, denoted by $-\nu_i(Y)$, are nontrivial. Denoting by $r_i$ the rank of $\cN^{-\nu_i(Y)}(Y)$, the set of the elements $\nu_i(Y)$, counted $r_i$-times, will be called the {\em compass of the action of $H$ on  $Y$}. We will write it as $$\mathcal{C}(Y,X,H)=\left\{\nu_1(Y)^{r_1},\nu_2(Y)^{r_2},\dots\right\}.$$

In order to make our exposition self-contained, we will recall from \cite{BWW} two statements regarding compasses that we will use later on; the first one describes the behavior of the compass with respect to downgrading:

\begin{lemma}  \cite[Lemma 2.13]{BWW} \label{lem:compass_prop} Let  $H$ be an algebraic torus acting on a smooth projective variety $X$, and $H'\subset H$ a subtorus. Let $\imath^*\colon \Mo(H)\to \Mo(H')$ be the projection between the corresponding lattices of weights. Take fixed point components $Y'\subset X^{H'}$ and $Y\subset Y'\cap X^{H}$. Then:
\begin{enumerate}[(i)] 
\item $\cC(Y',X,H')=\imath^*(\cC(Y,X,H))$; 
\item $\cC(Y,Y',H/H')=\cC(Y,X,H)\cap \ker{\imath^*}$, and $\cN_{Y|Y'}=\bigoplus_{\nu\in \cC(Y,Y',H/H')} \cN^{-\nu}(Y)$.
\end{enumerate}
\end{lemma}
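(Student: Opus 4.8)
The plan is to reduce everything to the $H$-equivariant decomposition of the normal bundles into weight subbundles, using the standard Bialynicki--Birula fact that for a torus acting on a smooth variety the tangent space to a fixed-point component coincides fibrewise with the invariant (weight-zero) part of the ambient tangent space. First I would record the basic setup: since $Y\subset Y'$ and all of $Y$, $Y'$, $X$ are $H$-invariant (by Lemma~\ref{lem:downgrading}(ii) the component $Y'$ is $H$-invariant), there is an $H$-equivariant short exact sequence of vector bundles on $Y$,
\begin{equation*}
0\to \cN_{Y|Y'}\to \cN_{Y|X}\to \cN_{Y'|X}|_Y\to 0.
\end{equation*}
The eigen-subbundle decomposition $\cN_{Y|X}=\bigoplus_i \cN^{-\nu_i(Y)}(Y)$ refines this sequence, so it suffices to decide, for each weight $\nu_i(Y)$ occurring in $\cC(Y,X,H)$, whether the corresponding summand lies inside $\cN_{Y|Y'}$ or injects into $\cN_{Y'|X}|_Y$.

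To make that decision I would use the characterization of $Y'$: because $Y'$ is a component of $X^{H'}$, at any $y\in Y$ the tangent space $T_yY'$ is exactly the $H'$-invariant part of $T_yX$, that is, the sum of those $H$-weight spaces whose weight lies in $\Ker\imath^*$ (a weight $\alpha$ of $H$ restricts trivially to $H'$ precisely when $\imath^*\alpha=0$). Hence the summand $\cN^{-\nu_i(Y)}(Y)$ lands in $\cN_{Y|Y'}$ if and only if $\nu_i(Y)\in\Ker\imath^*$, and otherwise maps with nonzero image into $\cN_{Y'|X}|_Y$. This immediately yields the bundle identity $\cN_{Y|Y'}=\bigoplus_{\nu\in\Ker\imath^*}\cN^{-\nu}(Y)$ of part (ii); identifying $\Ker\imath^*$ with $\Mo(H/H')$, so that the weight of such a summand for the quotient torus $H/H'$ is just $\nu$, gives $\cC(Y,Y',H/H')=\cC(Y,X,H)\cap\Ker\imath^*$, once one notes via Lemma~\ref{lem:downgrading}(iii) that $Y$ is indeed a fixed-point component of the $H/H'$-action on $Y'$.

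For part (i), the complementary summands, those with $\nu_i(Y)\notin\Ker\imath^*$, assemble to $\cN_{Y'|X}|_Y$, whose $H'$-weights are obtained by applying $\imath^*$ to the corresponding $H$-weights. Since the $H'$-weights of $\cN_{Y'|X}$ are by definition the compass $\cC(Y',X,H')$, and restriction to $Y\subset Y'$ does not change them, this gives $\cC(Y',X,H')=\imath^*(\cC(Y,X,H))$, under the convention that the weights of $\cC(Y,X,H)$ landing in $\Ker\imath^*$ (which map to $0$) are discarded.

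Since each claim is essentially a translation of the weight-space data, I do not expect a deep obstacle; the main point requiring care is the bookkeeping of the multiplicities $r_i$ together with the zero-weight convention in (i), and checking that the fibrewise description of $T_yY$ and $T_yY'$ as weight-zero subspaces upgrades to the global eigen-subbundle decomposition used throughout. Both follow from the $H$-equivariance of the linearized action in a neighborhood of $Y$, so the argument is mechanical once the exact sequence above is set up $H$-equivariantly.
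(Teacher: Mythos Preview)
The paper does not prove this lemma; it is quoted from \cite[Lemma~2.13]{BWW} without argument, so there is no proof here to compare against. Your approach---splitting the $H$-equivariant sequence $0\to\cN_{Y|Y'}\to\cN_{Y|X}\to\cN_{Y'|X}|_Y\to 0$ according to $H$-weights and using the Bia{\l}ynicki-Birula identification of $T_yY'$ with the $H'$-invariant part of $T_yX$---is the standard and correct way to establish this statement. The one point requiring care, which you already flag, is that elements of $\cC(Y,X,H)$ lying in $\Ker\imath^*$ map to $0$ under $\imath^*$ and are discarded (compasses never contain $0$); with that convention the multiset equality in (i) holds with multiplicities, since the weight decomposition of $\cN_{Y'|X}$ is constant along the connected component $Y'$ and can therefore be read off over $Y$.
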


Another important property of the compass is that its elements at a given fixed point $x\in X^H$ are vectors pointing from $\mu_L(x)$ towards the weight $\mu_L(x')$ of another fixed point $x'\in X^H$. More precisely:

\begin{lemma} \cite[Corollary 2.14]{BWW} \label{lem:compass_prop2} Let $H$ be an algebraic torus acting on a smooth projective variety $X$, let $L$ be a line bundle on $X$, and let  $\mu_L$ be a linearization of the action. Take $Y\subset X^{H}$, and $\nu \in \cC(Y,X,H)$. Then there exists $Y^{\prime}\subset X^{H}$ and $\lambda \in \mathbb{Q}_{>0}$ such that $\mu_{L}(Y^{\prime})=\mu_{L}(Y)+\lambda \nu$.
\end{lemma}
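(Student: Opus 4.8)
The plan is to realize the compass vector $\nu$ geometrically, as the tangent direction at $Y$ of an $H$-invariant rational curve joining $Y$ to the sought component $Y'$, and then to read off the weight relation $\mu_L(Y')=\mu_L(Y)+\lambda\nu$ from the elementary theory of equivariant line bundles on $\P^1$. The reason this should work is that $\nu$ is, by definition, minus the $H$-weight of an eigen-subbundle $\cN^{-\nu}(Y)$ of $\cN_{Y|X}$, and any single weight line is automatically preserved by the whole torus, so it can be ``integrated'' to an honest invariant curve.

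First I would reduce to a one-parameter subgroup. Choose a cocharacter $\sigma\colon\C^*\to H$ with $\langle\nu,\sigma\rangle>0$, taken general enough that the induced $\C^*$-action has $X^{\sigma(\C^*)}=X^H$ (this only excludes the finitely many walls on which two of the weights $\mu_L(Z)$, $Z\in\cY$, or two tangent weights at the $Y$'s, coincide; the half-space $\langle\nu,\sigma\rangle>0$ still meets the complement). Then I fix a general point $y\in Y$ and a nonzero vector $v$ in the fibre $\cN^{-\nu}(Y)_y$. Since $\cN^{-\nu}(Y)$ is an $H$-eigen-subbundle, the line $\C v\subset T_yX$ is $H$-invariant, with $H$ acting on it through the character $-\nu$; note that its $\sigma$-weight $\langle-\nu,\sigma\rangle$ is negative.

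Next I would build the curve by Bia\l{}ynicki--Birula theory. Let $x$ be the point lying over $y$, in the direction $v$, of the cell $\{z:\lim_{t\to\infty}\sigma(t)\cdot z\in Y\}$; because $X$ is smooth and projective this cell is an $H$-invariant affine bundle over $Y$ modelled on the negative-$\sigma$-weight part of $\cN_{Y|X}$, so both limits $\lim_{t\to\infty}\sigma(t)\cdot x=y$ and $\lim_{t\to0}\sigma(t)\cdot x=:y'$ exist. Set $C=\overline{\sigma(\C^*)\cdot x}$, a rational curve. The orbit $\sigma(\C^*)\cdot x$ sweeps out the $H$-invariant line $\C v$ inside the eigen-subbundle, so $C$ is $H$-invariant; as $H$ is connected and acts on $C\cong\P^1$, it fixes each of the two $\sigma$-fixed points $y,y'$, whence $y'\in Y'$ for some $Y'\in\cY$. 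By construction $T_yC=\C v$ carries $H$-weight $-\nu$. Passing to the $H$-equivariant normalization $\P^1\to C$ and applying the standard weight computation for an equivariant line bundle on $\P^1$ (the tangent weights at the two fixed points are opposite, and the fibre weights of $L|_C$ differ by the degree times the tangent weight), I obtain $\mu_L(Y')-\mu_L(Y)=(L\cdot C)\,\nu$. Taking $\lambda=L\cdot C$ finishes the proof; positivity $\lambda\in\Q_{>0}$ comes from ampleness of $L$ (equivalently, pairing with $\sigma$ gives $\langle\mu_L(Y')-\mu_L(Y),\sigma\rangle=(L\cdot C)\langle\nu,\sigma\rangle>0$, as the $\sigma$-weight of an ample $L$ strictly increases towards the source $y'=\lim_{t\to0}$).

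The main obstacle I expect is precisely the construction step: upgrading the weight line $\C v$, which a priori is only a tangent direction at the fixed point $y$, into a genuine $H$-invariant rational curve connecting two fixed components, and certifying both its $H$-invariance and that its tangent at $y$ is exactly the weight-$(-\nu)$ direction. Once the Bia\l{}ynicki--Birula structure of the attracting cell is invoked to guarantee that the two limits exist and that the orbit closure stays inside the eigen-subbundle, the remaining weight bookkeeping on $\P^1$ is routine, and the sign works out so that one lands on $+\lambda\nu$ rather than $-\lambda\nu$.
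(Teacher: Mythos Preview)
The paper does not supply a proof of this lemma; it is quoted verbatim from \cite[Corollary~2.14]{BWW}. So there is no in-paper argument to compare yours against.

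Your approach is the standard one and is essentially correct. A couple of points worth tightening. First, the reason the tangent weight at $y$ on the normalization of $C$ is exactly $-\nu$ (with no extra integer factor) is that $C$ is already smooth at $y$: inside the Bia\l{}ynicki--Birula cell, which you correctly describe as $H$-equivariantly isomorphic to the total space of $\cN^{-}_{\sigma}(Y)$, the curve $C\cap X^-(Y)$ is the image of the line $\C v$, hence smooth. This is what makes your formula $\mu_L(Y')-\mu_L(Y)=(L\cdot C)\,\nu$ come out without a denominator. Second, the existence of the second limit $y'=\lim_{t\to 0}\sigma(t)\cdot x$ follows simply from projectivity of $X$, not from the cell structure; and $y'\notin Y$ because $y'$ lies outside the affine cell $X^-(Y)\supset Y$.

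One caveat: you invoke ampleness of $L$ to get $\lambda=L\cdot C>0$, but the lemma as stated here says only ``let $L$ be a line bundle''. Without ampleness the conclusion is false (e.g.\ $X=\P^1\times\P^1$ with $\C^*$ acting on the first factor and $L=\cO(0,1)$: the two fixed components have equal $L$-weight, yet the compass is nonzero). In the applications throughout the paper $L$ is always the ample polarization, and the original statement in \cite{BWW} is for polarized pairs, so this is a harmless omission in the transcription rather than a gap in your reasoning; but it is worth recording that the hypothesis is genuinely used.
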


The above invariants of $H$-actions were introduced in \cite{BWW} with the motivation that, in the case in which $H\subset G$ is the maximal torus of a semisimple group, they can be used to characterize $G$-representations and rational homogeneous $G$-varieties. In particular, in this paper we will make use of the following statement, which has been proved in \cite[Propositions~2.23, 2.24]{BWW} under the additional assumption that all the fixed point components are isolated points; the proof follows the same line of argumentation.

\begin{proposition} \label{prop:BWW224} 
Let $(X,L)$ and $(X',L')$ be two polarized pairs.  Let $H$ be a complex torus acting on both pairs and denote by $\cY$ and $\cY'$ the set of irreducible fixed components of $X^H$ and $X'^H$, respectively. Assume that there exists a bijection $\psi:\cY\to \cY'$ such that, for every $Y\in \cY$:
\begin{itemize}
\item[{$(A_1)$}] $Y\simeq \psi(Y)$;
\item[{$(A_2)$}] $\mu_L(Y)=\mu_{L'}(\psi(Y))$, and $L_{|Y}\simeq L'_{|\psi(Y)}$;  
\item[{$(A_3)$}] $\cC(Y,X,H)=\cC(\psi(Y),X',H)$, and $\cN^{-\nu}(Y)\simeq\cN^{-\nu}(\psi(Y))$ for every $\nu\in\cC(Y,X,H)$.
\end{itemize}
Then:
\begin{itemize}
\item[{$(C_1)$}] If $\HH^i(X,L)=\HH^i(X',L')=0$ for $i>0$, then $\HH^0(X,L)$ is $H$-equivariantly isomorphic to $\HH^0(X',L')$.
\item[{$(C_2)$}] If the actions of $H$ on $X$ and $X'$ are restrictions of the actions of a semisimple group $G$ containing $H$ as a maximal torus, and if $X'$ is a rational homogeneous $G$-variety, then $(X,L)\simeq (X',L')$.
\end{itemize}
\end{proposition}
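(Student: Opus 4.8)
The plan is to prove the two conclusions by comparing the cohomologies and geometries of $(X,L)$ and $(X',L')$ through the localization data encoded in the fixed point components, their normal bundles, and compasses. Both statements are ``rigidity'' assertions: hypotheses $(A_1)$--$(A_3)$ say that the two pairs look identical at each fixed locus (same abstract variety, same linearized restriction of $L$, same normal bundle as an $H$-representation), and we must upgrade this local coincidence to a global one.

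\textbf{Proof of $(C_1)$.} First I would compute $\HH^0(X,L)$ and $\HH^0(X',L')$ as $H$-representations via equivariant localization. Since $\HH^i(X,L)=\HH^i(X',L')=0$ for $i>0$, the equivariant Euler characteristic equals $\HH^0$ in each case, and Atiyah--Bott / Lefschetz--Riemann--Roch expresses the equivariant $\chi$ as a sum of contributions localized at the fixed components $Y$. Each contribution is built solely from the data $\mu_L(Y)$, $L_{|Y}$, the weights of $\cN_{Y|X}$ (i.e.\ the compass $\cC(Y,X,H)$), and the geometry of $Y$ itself. By $(A_1)$--$(A_3)$, applied componentwise through the bijection $\psi$, each localized contribution for $(X,L)$ matches that for $(\psi(Y),X',L')$; summing over $\cY$ and $\cY'$ and using that $\psi$ is a bijection gives an equality of equivariant Euler characteristics, hence an $H$-equivariant isomorphism $\HH^0(X,L)\simeq\HH^0(X',L')$ as virtual, and by vanishing as honest, representations. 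This is essentially the argument of \cite[Proposition~2.23]{BWW}, now carried out with positive-dimensional $Y$ rather than isolated points; the only new ingredient is tracking the internal cohomology of each $Y$, which is identical on both sides by $(A_1)$ and $(A_2)$.

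\textbf{Proof of $(C_2)$.} Here $H$ is a maximal torus of a semisimple $G$ acting on both $X$ and $X'$, with $X'$ a rational homogeneous $G$-variety, and we upgrade to an isomorphism of pairs. The strategy is to use $L'$ (which is very ample, being a homogeneous polarization of $X'$) to realize $X'\hookrightarrow\PP(\HH^0(X',L')^*)$ $G$-equivariantly as the closed orbit. By $(C_1)$ we have a canonical $H$-equivariant identification $\HH^0(X,L)\simeq\HH^0(X',L')$; the point is to promote this to a $G$-equivariant one and then compare the images of $X$ and $X'$ in the common projective space. Because $G$ is semisimple and $H$ is a maximal torus, the irreducible $G$-representation appearing is determined by its $H$-weights (its character), and the multiplicity-one structure of the homogeneous coordinate ring of the closed orbit forces the $H$-isomorphism of $(C_1)$ to be $G$-equivariant after rescaling. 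Then the rational map $X\dashrightarrow\PP(\HH^0(X,L)^*)$ is $G$-equivariant, and since $X'$ is the unique closed $G$-orbit, the image of $X$ must coincide with $X'$; nondegeneracy of $L$ and the weight data pin down $L\simeq L'$ under this identification, giving $(X,L)\simeq(X',L')$.

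\textbf{Main obstacle.} I expect the essential difficulty to lie in $(C_2)$, specifically in the step promoting the $H$-equivariant isomorphism of sections to a $G$-equivariant identification and then transporting the global geometry of $X$ onto that of the homogeneous model $X'$. The localization argument only sees the maximal torus $H$, so matching full $G$-representations requires that the weight data determine the $G$-module structure uniquely—this is where semisimplicity and the precise structure of the homogeneous coordinate ring of $X'$ must be invoked, rather than mere numerics. Controlling that the induced map $X\to\PP(\HH^0(X,L)^*)$ lands on the closed orbit, and is an isomorphism onto $X'$ rather than merely dominant or degree $>1$, is the crux; for isolated fixed points this is \cite[Proposition~2.24]{BWW}, and the claim is that the same line of argument survives when the fixed components are positive-dimensional, provided the componentwise hypotheses $(A_1)$--$(A_3)$ furnish enough local data to reconstruct the embedding.
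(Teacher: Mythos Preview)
Your argument for $(C_1)$ is correct and is exactly what the paper does: equivariant localization gives $\chi^H(X,L)=\chi^H(X',L')$, and the vanishing hypothesis upgrades this to an $H$-isomorphism of $\HH^0$'s.

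For $(C_2)$ your strategy is right in spirit but has two concrete gaps, both of which the paper's proof closes with one-line fixes.

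First, you invoke $(C_1)$ to obtain $\HH^0(X,L)\simeq\HH^0(X',L')$ as $H$-modules, but $(C_1)$ needs the vanishing hypothesis $\HH^i=0$ for $i>0$, and that hypothesis is \emph{not} part of $(C_2)$. The paper instead runs the localization argument for $L^{\otimes m}$ with $m\gg 0$: hypotheses $(A_1)$--$(A_3)$ for $L$ immediately give the analogous data for $L^{\otimes m}$ (same components, same normal bundles, $L^{\otimes m}$ restricted), Serre vanishing supplies $\HH^i=0$ for free, and one gets $H$-isomorphisms $\HH^0(X,L^{\otimes m})\simeq\HH^0(X',L'^{\otimes m})$ for all large $m$. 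This also makes the map $X\to\PP(\HH^0(X,L^{\otimes m})^\vee)$ an honest embedding, dissolving your worry about a merely rational map.

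Second, the direction of the containment is the reverse of what you wrote: once $X\hookrightarrow\PP(V)$ is $G$-equivariant, the image is $G$-invariant and therefore \emph{contains} the unique closed orbit $X'$---it does not a priori land inside it. To finish, the paper observes that $(A_3)$ forces $\dim X=\dim X'$ (the compass at a fixed component, together with $\dim Y$, recovers $\dim X$), and then containment plus equal dimension plus irreducibility gives $X=X'$. This dimension count is exactly the ``crux'' you identify in your obstacle paragraph; it is resolved not by analyzing the map more carefully but by the elementary observation that the combinatorial data already pin down the dimension.
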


\begin{proof}
Note that conditions $(A_2), (A_3)$ are equivalent to require that $L_{|Y}$ is $H$-equivariantly isomorphic to $L'_{|\psi(Y)}$ and that $\cN_{Y|X}$ is $H$-equivariantly isomorphic to $\cN_{\psi(Y)|X'}$, respectively. This implies an equality of $H$-equivariant Euler characteristics $\chi^H(X,L)=\chi^H(X',L')$ (see \cite[Theorem~A.1]{BWW}); together with the hypothesis on the vanishing of the cohomology, this tells us that $\HH^0(X,L)$, $\HH^0(X',L')$ are equal as elements of the representation ring of $H$. 

For the second part, a similar argument provides isomorphisms of $H$-modules: 
$H^0(X,L^{\otimes m})\simeq H^0(X',L'^{\otimes m})$, for $m\gg 0$.
By standard Representation Theory (cf \cite[Theorem~14.18]{FH}), since these spaces are $G$-modules, they are isomorphic also as $G$-modules. Then for $m\gg 0$, the natural morphism $\phi: X\to \P(H^0(X,L^{\otimes m})^\vee)$ will be a $G$-equivariant embedding so that the image of $X$ will be invariant by $G$. In particular, it will contain the unique closed orbit of the action, which is isomorphic to $X'$. Since condition $(A_3)$ implies that $\dim{X}=\dim{X'}$ we conclude that $(X,L)\simeq (X',L')$. 
\end{proof}

\subsection{Actions of $\C^*$ on smooth projective varieties}\label{ssec:actionC*}
  
The case of actions of the torus $\C^*$, that has been extensively studied in the literature (see \cite{CARRELL} and references therein),  will be particularly useful for our purposes. We introduce here some notation and basic facts we will use when dealing with this type of actions. 
 
We first choose an isomorphism $\Mo(\C^*)\simeq \Z$. Given a $\C^*$-action on a smooth projective variety $X$, for every fixed point component $Y\in \cY$ the normal bundle of $Y$ in $X$ splits into two subbundles, on which $\C^*$ acts with positive and negative weights, respectively:
\begin{equation}
\cN_{Y|X}\simeq \cN^+(Y)\oplus \cN^-(Y).
\label{eq:normal+-}
\end{equation}
We denote by $\nu^\pm(Y)=\rank \cN^\pm(Y)$ the ranks of these subbundles. Then the following result, due to Bia{\l}ynicki-Birula  (cf. 
\cite{BB}, \cite[Theorem 4.4]{CARRELL}), allows us to compute the homology groups of $X$:

\begin{theorem}
\label{thm:BB_decomposition}
In the situation described above there is a decomposition:
 $$H_m(X,\ZZ)=\bigoplus_{Y\in\cY}H_{m-2\nu^+(Y)}(Y,\ZZ)= \bigoplus_{Y\in\cY}H_{m-2\nu^-(Y)}(Y,\ZZ), \mbox{ for every }m.$$ 
\end{theorem}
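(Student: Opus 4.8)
The plan is to derive the homology decomposition from the geometric Bia\l ynicki-Birula stratification of $X$, and then to prove that the associated localization long exact sequences split by a weight (purity) argument; the two displayed equalities will correspond to the two opposite stratifications, interchanged by $t\mapsto t^{-1}$.

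First I would recall the geometric input. For each $Y\in\cY$ set
\[
X^+(Y)=\Big\{x\in X:\ \lim_{t\to 0}t\cdot x\in Y\Big\},\qquad X^-(Y)=\Big\{x\in X:\ \lim_{t\to \infty}t\cdot x\in Y\Big\}.
\]
By Bia\l ynicki-Birula's theorem these are locally closed smooth subvarieties partitioning $X$, and the maps $X^{\pm}(Y)\to Y$ given by the corresponding limits are Zariski-locally trivial affine bundles of ranks $\nu^{\pm}(Y)$; indeed the fibre directions over $y\in Y$ are exactly the weight spaces of $\cN^{\pm}(Y)$ in \eqref{eq:normal+-}. Moreover each stratification is filtrable: there is an ordering $Y_1,\dots,Y_s$ of $\cY$ (by the value of a linearization weight $\mu_L$) for which the unions $X_j:=\bigcup_{i\le j}X^+(Y_i)$ are closed, giving a filtration $\emptyset=X_0\subset X_1\subset\dots\subset X_s=X$ by closed subvarieties with $X_j\setminus X_{j-1}=X^+(Y_j)$.

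Next I would run the localization sequence in Borel-Moore homology $H^{\mathrm{BM}}_*$, which for smooth projective $X$ agrees with ordinary homology. For the closed subvariety $X_{j-1}\subset X_j$ with open complement $X^+(Y_j)$ one has
\[
\cdots\to H^{\mathrm{BM}}_m(X_{j-1})\to H^{\mathrm{BM}}_m(X_j)\to H^{\mathrm{BM}}_m(X^+(Y_j))\xrightarrow{\ \partial\ } H^{\mathrm{BM}}_{m-1}(X_{j-1})\to\cdots,
\]
and, since $X^+(Y_j)\to Y_j$ is an affine bundle over the smooth projective variety $Y_j$, the Thom isomorphism identifies $H^{\mathrm{BM}}_m(X^+(Y_j))\simeq H_{m-2\nu^+(Y_j)}(Y_j)$.

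The crux, and the step I expect to be the main obstacle, is the vanishing of the connecting maps $\partial$. Here I would use that Borel-Moore homology carries a functorial mixed Hodge structure and that the sequence above is one of such structures. For the smooth projective $Y_j$ the group $H_{m-2\nu^+(Y_j)}(Y_j)$ is pure of weight $-m+2\nu^+(Y_j)$, and after the Tate twist supplied by the Thom isomorphism $H^{\mathrm{BM}}_m(X^+(Y_j))$ is pure of weight $-m$. Arguing by induction on $j$ (the base case $X_1=X^+(Y_1)$ being settled likewise), I may assume $H^{\mathrm{BM}}_{m-1}(X_{j-1})$ pure of weight $-m+1$; then $\partial$ is a morphism of mixed Hodge structures whose image is at once a quotient of a pure structure of weight $-m$ and a subobject of a pure one of weight $-m+1$, hence zero. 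Consequently the sequence breaks into short exact sequences
\[
0\to H^{\mathrm{BM}}_m(X_{j-1})\to H^{\mathrm{BM}}_m(X_j)\to H_{m-2\nu^+(Y_j)}(Y_j)\to 0
\]
of pure weight-$(-m)$ structures, which both advances the induction and yields $H_m(X)=\bigoplus_{Y}H_{m-2\nu^+(Y)}(Y)$ rationally; the integral statement is Bia\l ynicki-Birula's original result, obtained from the same affine-cell filtration. Repeating the argument with the opposite stratification $\{X^-(Y)\}$ (equivalently, replacing the action by $t\mapsto t^{-1}$, which interchanges $\nu^+$ and $\nu^-$) gives the second equality. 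As an alternative to the purity argument, the same degeneration follows from Frankel's theorem that the moment map of the induced Hamiltonian $S^1$-action is a perfect Morse-Bott function, its critical manifolds being the $Y\in\cY$ with even indices $2\nu^{\pm}(Y)$.
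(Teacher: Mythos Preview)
The paper does not prove this theorem; it is quoted as a result of Bia\l ynicki-Birula, with references to \cite{BB} and \cite[Theorem~4.4]{CARRELL}, and no argument is given. There is thus no proof in the paper to compare against.

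On its own merits your outline is a correct and standard route over $\QQ$: the plus/minus stratification into affine bundles over the $Y\in\cY$, filtrability (guaranteed here by the ample linearized bundle $L$), the Borel--Moore localization sequences, and degeneration via purity of mixed Hodge structures. The one soft spot is the passage to $\ZZ$-coefficients. Purity is a $\QQ$-linear notion, and the sentence ``the integral statement is Bia\l ynicki-Birula's original result'' is circular when that result is precisely what is to be proved; likewise, Frankel's perfectness is a statement over a field, and evenness of the indices alone does not kill the integral differentials when the critical manifolds $Y$ may carry torsion. Over $\ZZ$ a separate mechanism is needed (as in \cite{BB}, where the affine-bundle cell structure is used directly). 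That said, for every use of this theorem later in the paper---the Picard number computation in Lemma~\ref{lem:defXmu}, and the Betti numbers and Euler characteristics in Lemma~\ref{lem:grid1}---the rational decomposition you do establish is already sufficient.
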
 

Given a polarized pair $(X,L)$ admitting a $\C^*$-action, we identify the weights $\mu_L(Y)$ with the corresponding integers. The minimum and maximum value of $\mu_L$, denoted by $\mu_{\min}$ and $\mu_{\max}$, are achieved in two unique fixed point components, that we call the \textit{sink} and the \textit{source} of the action; the rest of the fixed point components will be called \textit{inner} the fixed points components which are neither the source nor the sink.  The \textit{bandwidth} of the action of $H$ on $(X,L)$ is defined as the difference $|\mu|=\mu_{\max}-\mu_{\min}$. Moreover, we say that the action of $H$ on $X$ is {\em  equalized at} $Y$ if $H$ acts on $\cN_{Y|X}$ with weights $\pm 1$, and we call the action {\em  equalized} if it is equalized at every fixed point component $Y\in \cY$. Note that if the action is equalized at the sink and the source and has bandwidth two and three, then using \cite[Lemma 3.1]{RW} easily follows that the action is equalized (cf. \cite[Corollary 2.14]{ORSW} for the case of bandwidth two).  \\
When the bandwidth of a $\C^*$-action on a pair is small, one expects to have reasonably short lists of examples, under certain assumptions. We refer to \cite{ORSW} for an account on this matter. In this paper we will make use of the following two results, regarding equalized actions of bandwidth two and three. The first tells us that under some conditions an action of bandwidth two is determined by its local behaviour at the sink and the source:

\begin{theorem}\label{thm:uniqueBW2}\cite[Corollary 5.12]{ORSW} Let $(X,L)$ be a polarized pair supporting a  $\C^*$-action of bandwidth two, equalized at the sink $Y_{-1}$ and the source $Y_1$, which are both positive dimensional. Assume moreover that $\rho_X=1$, that there exists an inner fixed point component, and that the vector bundles $\cN_{Y_{\pm1}/X}^\vee \otimes L$ are semiample. Then $X$ is uniquely determined by $(Y_{\pm1}, \cN_{Y_{\pm1}/X})$.
\end{theorem}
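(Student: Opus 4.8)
The plan is to reconstruct $(X,L)$ canonically from the data $(Y_{\pm1},\cN_{Y_{\pm1}/X})$, so that any two pairs sharing this data are forced to be isomorphic; this is exactly a uniqueness statement. First I would normalize the linearization so that the three weights of $\mu_L$ are $-1,0,1$, with the sink $Y_{-1}$ carrying weight $-1$, the source $Y_1$ weight $+1$, and the inner components weight $0$; note that equalization at sink and source propagates to the whole action (by the remark following the definition of bandwidth). By the Bia\l{}ynicki-Birula decomposition (Theorem \ref{thm:BB_decomposition} and its refinement), the cell $X^+(Y_{-1})$ of points flowing to the sink is open and is an affine bundle over $Y_{-1}$ modelled on $\cN^+(Y_{-1})=\cN_{Y_{-1}/X}$, all of whose weights are $+1$; symmetrically $X^-(Y_1)$ is open and modelled on $\cN_{Y_1/X}$ with weights $-1$. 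Thus $X$ is covered by two open pieces read off from the given normal bundles, and the whole problem is to pin down how they are glued along the locus swept out by the inner orbits.

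To control the gluing I pass to the quotient picture. By equalization at sink and source together with bandwidth two, the closures of general $\C^*$-orbits are rational curves of $L$-degree $2$ joining a point of $Y_{-1}$ to a point of $Y_1$, while the $L$-degree $1$ orbit closures are exactly those meeting the inner component. The geometric quotient just above the sink is $\P(\cN_{Y_{-1}/X})$ and just below the source is $\P(\cN_{Y_1/X})$; both are determined by the data. These are the GIT quotients on either side of the unique inner wall, hence are related by a single birational transformation whose centre lies over the image of the inner fixed component. The hypothesis that $\cN_{Y_{\pm1}/X}^\vee\otimes L$ is semiample says precisely that the relevant relative tautological bundles on $\P(\cN_{Y_{\pm1}/X})$ are semiample, so each projectivization admits a morphism onto one and the same target $\bar X$, realizing the wall-crossing as a pair of contractions $\P(\cN_{Y_{-1}/X})\to\bar X\leftarrow\P(\cN_{Y_1/X})$.

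With $\bar X$ and the two contractions in hand, I reconstruct $X$ as the total space of the canonical family of degree-$2$ orbit closures over $\bar X$ — a $\P^1$-fibration whose two distinguished end divisors are the projectivized normal bundles — by blowing those divisors down onto the sink and source (equivalently, taking the closure of the union of general orbits inside a fixed projective embedding afforded by a high power of $L$). Because every ingredient ($\bar X$, the two contractions, the $\P^1$-family and its sections) has been read off from $(Y_{\pm1},\cN_{Y_{\pm1}/X})$, the output is unique up to isomorphism, which is the assertion. The role of $\rho_X=1$ is decisive: it forces the two cells to be the only extremal structures, so that the contractions onto $\bar X$ and the final blow-down are the unique extremal contractions available and no moduli of gluings survive. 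Alternatively, once the full fixed-point data of two such $X$, $X'$ have been matched, one may invoke the matching principle behind Proposition \ref{prop:BWW224} to upgrade equality of invariants to an isomorphism of pairs.

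The main obstacle is the middle step: showing that the inner fixed component and the birational transformation relating the two ends are themselves forced by the sink--source data. Concretely one must prove that the inner component, together with its normal eigenbundle decomposition, is determined; this I would extract from the behaviour of the compass under downgrading (Lemma \ref{lem:compass_prop}) and the fact from Lemma \ref{lem:compass_prop2} that compass vectors at the sink and source point exactly towards the weights of the components they connect to, so that the combinatorics of the weights $-1,0,1$ pins down the local model at the inner component as an equalized, hence rigid, transformation. Establishing that semiampleness together with $\rho_X=1$ leaves this transformation with no deformations is where the real work lies, and it is there that the hypotheses are genuinely used.
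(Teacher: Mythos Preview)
This theorem is not proved in the present paper at all: it is quoted verbatim as \cite[Corollary~5.12]{ORSW} and used as a black box in the proof of Proposition~\ref{prop:Z}. So there is no ``paper's own proof'' to compare against here; the comparison has to be with the argument in \cite{ORSW}.

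Your outline is broadly in the spirit of that source: the GIT/Chow quotients on either side of the inner wall are indeed the projectivized normal bundles $\P(\cN_{Y_{\pm1}/X})$, the semiampleness of $\cN_{Y_{\pm1}/X}^\vee\otimes L$ is precisely what makes the relevant tautological bundles induce morphisms, and the reconstruction of $X$ goes via a birational cobordism picture linking the two ends through a common target. That said, your write-up is a plan, not a proof: you explicitly flag that ``establishing that semiampleness together with $\rho_X=1$ leaves this transformation with no deformations is where the real work lies,'' and you do not carry it out. In \cite{ORSW} this is exactly the substantive part --- identifying the inner component and its normal data, and showing the flip/flop between $\P(\cN_{Y_{-1}/X})$ and $\P(\cN_{Y_{1}/X})$ is forced --- and it requires more than the compass lemmas you cite.

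One concrete error: your ``alternative'' via Proposition~\ref{prop:BWW224} does not work. Part $(C_1)$ only matches $\HH^0$'s as $H$-modules, which says nothing about an isomorphism of varieties; part $(C_2)$ needs the $\C^*$-action to extend to a semisimple group $G$ with $H$ as maximal torus \emph{and} needs one of the two varieties to be rational homogeneous. Neither hypothesis is available in Theorem~\ref{thm:uniqueBW2}, so that route is closed.
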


We will also make use  of the following complete classification of equalized bandwidth $3$ actions with isolated extremal fixed points: 

\begin{theorem}\label{thm:bw3}\cite[Theorem~4.5]{RW}, \cite[Theorem 6.8]{ORSW} 
  Let $(X,L)$ be a polarized pair, where $X$ is a projective manifold of
  dimension $n\geq 3$ with a linearized action of $\C^*$ of bandwidth three, such that its sink and source are isolated points.  Assume in addition that the action is equalized at the sink and the source, and denote by $Y_i$ the union of the inner fixed point components of weight $i$, $i=1,2$. Then one of the following holds:
  \begin{itemize}
 \item [(1)] $X=\P(\cV^\vee)$, 
 with $\cV=\cO_{\P^1}(1)^{n-1}\oplus\cO_{\P^1}(3)$, or $\cO_{\P^1}(1)^{n-2}\oplus\cO_{\P^1}(2)^2$, and $L= \cO_{\P(\cV^\vee)}(1)$. Moreover  $(Y_i,L_{|Y_i}) \simeq (\P^{n-2}, \cO_{\P^{n-2}}(1))$.

  \item [(2)] $X=\P^1\times\Q^{n-1}$, $L=\cO(1,1)$, each $Y_i$ is the disjoint union of a smooth quadric $\Q^{n-3}$ and a point, and $L_{|\Q^{n-3}} \simeq \cO_{\Q^{n-3}}(1)$.
  \item [(3)]  $X$ is one of the following rational homogeneous varieties:
$$\DC_3(3),\,\,\, \DA_5(3),\,\,\,  \DD_6(6),\,\,\, \DE_7(7),$$
$L$ is the ample generator of $\Pic(X)$ and the varieties $Y_i$ are, respectively
$$\DA_2(2),\,\,\, \DA_2(2) \times \DA_2(1),\,\,\,  \DA_5(2),\,\,\, \DE_6(1).$$
The restriction of $L$ to $Y_i$ is the ample generator of $\Pic(Y_i)$, except in the case $Y_i \simeq \DA_2(2) \simeq \P^2$, in which $L_{|Y_i} \simeq \cO_{\P^2}(2)$.
  \end{itemize}
\end{theorem}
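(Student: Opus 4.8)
The plan is to study the variation of the geometric (GIT) quotients of $X$ by the $\C^*$-action as the linearization crosses the walls determined by the inner fixed loci, and then to reconstruct $(X,L)$ from this data together with the local structure at the sink and the source. First I would normalize the linearization so that the weights of $L$ at the fixed loci are $0,1,2,3$, with sink $x_-$ of weight $0$, source $x_+$ of weight $3$, and $Y_1,Y_2$ the inner loci of weights $1,2$. Since $x_\pm$ are isolated and the action is equalized there, the fact recalled above that equalization at the sink and source propagates to all fixed components (a consequence of \cite[Lemma~3.1]{RW}) guarantees that the quotients are smooth and all the wall-crossings below are standard smooth birational modifications. Moreover $T_{x_-}X$ and $T_{x_+}X$ are copies of $\C^n$ with all weights $+1$, respectively $-1$, so the Bia\l ynicki--Birula plus-cell of $x_-$ is an affine space $\C^n$; a direct computation of the semistable loci in the two outer chambers then gives
$$
\mathcal{G}_{(0,1)}\;\cong\;\P^{n-1}\;\cong\;\mathcal{G}_{(2,3)}.
$$

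Next I would analyse the two wall-crossings, at $t=1$ and $t=2$. Writing $\cN_{Y_i|X}=\cN^+(Y_i)\oplus\cN^-(Y_i)$ with ranks $\nu_i^{\pm}$ and $d_i=\dim Y_i$ (so that $d_i+\nu_i^{+}+\nu_i^{-}=n$), crossing the wall at $Y_i$ replaces $\P(\cN^-(Y_i))$ by $\P(\cN^+(Y_i))$ inside the quotient: a smooth flip of type $(\nu_i^{-}-1,\nu_i^{+}-1)$, which degenerates to a blow-up or blow-down when one of the $\nu_i^{\pm}$ equals $1$ and to an isomorphism when $\nu_i^{+}=\nu_i^{-}=1$. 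The whole picture is therefore a chain
$$
\P^{n-1}\;\dashrightarrow\;\mathcal{G}_{(1,2)}\;\dashrightarrow\;\P^{n-1}
$$
of two such modifications, and the heart of the proof is to classify these chains. Using that both ends are $\P^{n-1}$, the smoothness of the modifications, the Bia\l ynicki--Birula homology decomposition (Theorem~\ref{thm:BB_decomposition}) to control $\rho_X$ and the Betti numbers, and the adjunction constraints tying $-K_X$, $L$ and the weights together, I would force the two centers $\P(\cN^-(Y_1))$ and $\P(\cN^+(Y_2))$ lying inside the outer projective spaces to be linear subspaces or smooth quadrics, and so pin down the admissible data $(Y_i,\cN^{\pm}(Y_i),L_{|Y_i})$.

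Finally I would reconstruct and identify $X$. If both walls are isomorphisms the quotient is independent of the chamber, which forces $X$ to be a $\P^{n-1}$-bundle over $\P^1$ and gives the two families of case~(1); the appearance of a smooth quadric among the flipping centers produces $\P^1\times\Q^{n-1}$ as in case~(2); and the genuinely small two-flip chains, for which $\rho_X=1$, yield the four rational homogeneous spaces of case~(3). For the reconstruction of the total space from the quotient data I would invoke the principle, exactly analogous to the bandwidth-two statement of Theorem~\ref{thm:uniqueBW2}, that an equalized small-bandwidth action is determined by its local data at the extremal loci; for the identification of the homogeneous models I would match the fixed-point, weight and compass data against the candidate $G/P$ and conclude via Proposition~\ref{prop:BWW224}.

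I expect the main obstacle to be the classification of the two-flip chains in the middle step: one must show that the numerical and positivity constraints admit \emph{exactly} the solutions listed, ruling out spurious configurations, and then recognise each resulting geometry as the claimed variety rather than merely bounding its invariants. This is precisely where the explicit geometry of the flipping centers and the adjunction-theoretic input become indispensable, and where the homogeneous examples of case~(3) must be identified rather than only numerically constrained.
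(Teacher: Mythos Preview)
This theorem is not proved in the present paper; it is quoted from \cite[Theorem~4.5]{RW} and \cite[Theorem~6.8]{ORSW}, so there is no ``paper's own proof'' to compare against here. That said, your overall strategy --- studying the variation of GIT quotients across the two inner walls, identifying the two outer quotients with $\P^{n-1}$ because the extremal loci are isolated and equalized, and then classifying the admissible chains $\P^{n-1}\dashrightarrow\mathcal G_{(1,2)}\dashrightarrow\P^{n-1}$ of elementary birational modifications --- is exactly the framework of the cited references, so the plan is on the right track.

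There is, however, a genuine gap in your final identification step. You propose to recognise the homogeneous varieties of case~(3) by matching fixed-point and compass data and then invoking Proposition~\ref{prop:BWW224}\,$(C_2)$. But that statement has as a \emph{hypothesis} that the torus action on $X$ is the restriction of an action of a semisimple group $G$ in which $H$ sits as a maximal torus. In Theorem~\ref{thm:bw3} you are given only a bare $\C^*$-action on $X$, with no a~priori $G$-equivariance, and $\C^*$ is certainly not a maximal torus of $\Sp(6)$, $\SL(6)$, $\Spin(12)$ or $E_7$; so the appeal to Proposition~\ref{prop:BWW224} is circular. The identification in \cite{ORSW} is carried out instead by direct birational geometry: one pins down the pair $(Y_i,\cN^{\pm}(Y_i))$ from the constraint that $\P(\cN^-(Y_1))$ and $\P(\cN^+(Y_2))$ sit as smooth subvarieties of $\P^{n-1}$ with specific normal bundles, and then reconstructs $X$ from these local data via the uniqueness principle you allude to (the bandwidth-three analogue of Theorem~\ref{thm:uniqueBW2}), checking that the candidate $G/P$ with its short grading realises the same data. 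You should replace the invocation of Proposition~\ref{prop:BWW224} by this direct reconstruction argument.
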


%%%%%%%%%%%%%%%%%%%%%%%

%%%%%%%%%%%%%%%%%%%%%%%

\section{Torus actions on adjoint varieties}\label{sec:torusGP}

In this section we will describe torus actions on rational homogeneous varieties, paying special attention to the case of adjoint varieties, whose characterization is the goal of this paper. We will start by briefly recalling the action of the maximal tori of their defining semisimple groups, then we will focus on some particular downgradings of those actions, that we will use later in the proof of Theorem \ref{thm:LB_isolated_pts}.

\subsection{The action of a maximal torus}\label{ssec:maximal}  

Let $G$ be a semisimple algebraic group, $B\subset G$ a Borel subgroup, and $H\subset B$ a Cartan subgroup. We denote by $\Phi$ the root system of $G$ with respect to $H$, by $W=N_G(H)/H$ the Weyl group of $G$, by $D=\{\alpha_1,\dots,\alpha_r\}$ the base of positive simple roots of $\Phi$ induced by $B\supset H$, by $\Phi^+$ the set of positive roots determined by $B$, and by $\cD$ the Dynkin diagram of $G$. 

The following well known statement (see \cite[Section 3.4]{CARRELL}) describes the set of $H$-fixed point of every rational homogeneous variety $G/P$, with $P\supset B$; we include its proof for lack of references:

\begin{lemma} 
\label{lem:fixGP} 
The set of fixed points of $G/P$ by the action of $H$ is: 
$$(G/P)^H=\{wP,w\in W\}. $$ 
\end{lemma}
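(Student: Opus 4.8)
The plan is to show the two inclusions of the claimed equality, identifying the $H$-fixed points of $G/P$ with the finite set of cosets $\{wP : w \in W\}$.

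First I would recall the standard structure theory. Since $P \supset B \supset H$, the $H$-action on $G/P$ is induced by left multiplication. The Bruhat decomposition gives $G = \bigsqcup_{w \in W/W_P} BwP$, where $W_P$ is the Weyl group of $P$; equivalently $G/P = \bigsqcup_{w} BwP/P$, a disjoint union of Bruhat cells, each of which is an affine space and contains exactly one $T$-fixed point, namely the image of the representative $w$ (more precisely $n_w P$ for a representative $n_w \in N_G(H)$ of $w$). The cosets $wP$ are well-defined points of $G/P$ independent of the choice of representative in $N_G(H)$ precisely because $H \subset P$, and there are finitely many of them (at most $|W/W_P|$, hence at most $|W|$).

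For the inclusion $\{wP\} \subseteq (G/P)^H$, the key observation is that $H$ is abelian and normalized by $N_G(H)$, so for $h \in H$ and a representative $n_w$ one has $h \cdot n_w P = n_w (n_w^{-1} h n_w) P = n_w h' P$ with $h' \in H \subset P$, whence $h \cdot n_w P = n_w P$; thus every coset $wP$ is $H$-fixed.

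The main point, and the step I expect to require the most care, is the reverse inclusion $(G/P)^H \subseteq \{wP\}$: that there are no other fixed points. Here I would argue via the Bruhat/Bia\l ynicki-Birula cell decomposition: each cell $BwP/P$ is $H$-invariant and isomorphic, as an $H$-variety, to an affine space $\A^N$ on which $H$ acts linearly fixing only the origin $n_wP$ (the weights on the cell are nontrivial roots, so the only fixed point inside the cell is its center). Since $G/P$ is the disjoint union of these cells, any $H$-fixed point must lie in some cell and coincide with its unique fixed point, giving $(G/P)^H = \{n_wP : w \in W\} = \{wP : w \in W\}$. Alternatively, one can deduce finiteness and the count $|W/W_P|$ from Theorem~\ref{thm:BB_decomposition} applied to a generic one-parameter subgroup of $H$, but the Bruhat-cell argument is the most direct. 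I would also note that distinct double-coset representatives $w \in W/W_P$ may give the same point, so the indexing set $\{wP, w\in W\}$ in the statement naturally has repetitions collapsed to the $W/W_P$-orbits.
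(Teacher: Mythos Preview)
Your proof is correct but follows a different line from the paper's. The paper first treats the full flag variety $G/B$: a coset $gB$ is $H$-fixed if and only if $gBg^{-1}\supset H$, and by the classification of Borel subgroups containing a fixed maximal torus (\cite[Section~27]{Hum1}) this happens precisely when $gBg^{-1}=wBw^{-1}$ for some $w\in N_G(H)$, giving $(G/B)^H=\{wB:w\in W\}$. It then descends to $G/P$ by observing that the $H$-equivariant projective map $\pi_P:G/B\to G/P$ sends fixed points to fixed points, and conversely the fibre over any $H$-fixed point of $G/P$, being $H$-invariant and projective, must itself contain an $H$-fixed point.

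Your argument via the Bruhat decomposition is more hands-on: you identify each cell $BwP/P$ with an affine space carrying a linear $H$-action whose weights are (nonzero) roots, hence with a unique fixed point. This has the advantage of working directly on $G/P$ without the reduction to $G/B$, and it makes the Bia{\l}ynicki-Birula picture (cf.\ Theorem~\ref{thm:BB_decomposition}) transparent; the cost is that one must appeal to the $H$-equivariant structure of the cells. The paper's approach, by contrast, isolates a single conceptual fact (Borel subgroups containing $H$ are a $W$-torsor) and then handles arbitrary $P$ by a soft properness argument. Both routes are standard and equally valid; your remark that the set $\{wP:w\in W\}$ is really indexed by $W/W_P$ is exactly the content of the paper's Remark~\ref{rem:fixGP}.
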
  

\begin{proof}
Let $B$ be a Borel subgroup of $G$, with $H\subset B\subset P$, and let us compute first $(G/B)^H$. A point $gB$ is fixed by $H$ if and only if $gBg^{-1}\supset H$. Following \cite[Section~27]{Hum1},  this holds if and only if $gBg^{-1}$ can be written as $wBw^{-1}$, for some $w\in N_G(H)$, and this Borel subgroup depends only on the class of $w$ modulo $H$. We conclude that $(G/B)^H=\{wB|\,\,w \in W\}$. 

Now, given $P\supset B$, we note first that the natural projection $\pi_P:G/B\to G/P$ is $H$-equivariant, hence $\pi_P((G/B)^H)\subset(G/P)^H$;  the converse follows from the fact that $\pi_P$ is projective, therefore the inverse image of a fixed point of $G/P$, which is $H$-invariant, contains a fixed point.
\end{proof}

\begin{remark}\label{rem:fixGP}
The first part of the above proof shows that $(G/B)^H$ is bijective to the Weyl group of $G$, $W:=N_G(H)/H$. Moreover, if $P=BW(D\setminus I)B$, where $W(D\setminus I)\subset W$ is the subgroup generated by the reflections corresponding to a subset $D\setminus I\subset D$, then $(G/P)^H$ is bijective to the quotient $W/W(D\setminus I)$.
\end{remark} 

We describe now the compasses of the $H$-action at fixed points $wP$, $w\in W$. 

\begin{lemma}\label{lem:compassGP}
If $P=BW(D\setminus I)B$, denoting by $\Phi^+(D\setminus I)$ the set of positive roots of $\Phi$  that are linear combinations of elements $\alpha_j$, $j\in D\setminus I$,  then:
$$
\cC(wP,G/P,H)\simeq w(\Phi^+\setminus\Phi^+(D\setminus I)).
$$
\end{lemma}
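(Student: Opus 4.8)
The plan is to reduce the general case $wP$ to the base case $w = e$ (the identity), and then compute the compass at the point $eP = P \in G/P$ directly from the Lie-theoretic data. The key observation is that the $H$-action and the compass are both compatible with the $G$-action in a precise sense: for $g \in N_G(H)$ representing $w \in W$, left-translation $L_g \colon G/P \to G/P$ is an automorphism carrying the fixed point $eP$ to $wP$, and it intertwines the $H$-action with the $H$-action twisted by conjugation, i.e. $h \cdot (g x) = g \cdot ((g^{-1}hg) x)$. Since conjugation by $g$ acts on $\Mo(H)$ by the Weyl element $w$, the weights of the $H$-action on the normal space $\cN_{wP|G/P}$ at $wP$ are obtained from those at $eP$ by applying $w$. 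Thus it suffices to prove $\cC(eP, G/P, H) \simeq \Phi^+ \setminus \Phi^+(D\setminus I)$, and the general formula follows by transport of structure.

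For the base case, I would identify the tangent space to $G/P$ at the point $eP$ with the quotient $\g / \fp$, where $\g = \Lie(G)$ and $\fp = \Lie(P)$. Under the root-space decomposition $\g = \fh \oplus \bigoplus_{\alpha \in \Phi} \g_\alpha$, the Cartan $H$ acts on the root space $\g_\alpha$ with weight $\alpha$. The parabolic $\fp$ associated to $P = BW(D\setminus I)B$ contains $\fh$, all of $\g_\alpha$ for $\alpha \in \Phi^+$, and those $\g_{-\alpha}$ with $\alpha \in \Phi^+(D\setminus I)$. Hence the quotient $\g/\fp$ is spanned by the images of $\g_{-\alpha}$ for $\alpha \in \Phi^+ \setminus \Phi^+(D\setminus I)$, on which $H$ acts with weight $-\alpha$. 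By the sign convention in the definition of the compass (the eigen-subbundle $\cN^{-\nu}(Y)$ carries weight $-\nu$), the compass elements $\nu_i(eP)$ are exactly the positive roots $\alpha \in \Phi^+ \setminus \Phi^+(D\setminus I)$, each appearing with multiplicity one since $\dim \g_\alpha = 1$. This gives $\cC(eP, G/P, H) \simeq \Phi^+ \setminus \Phi^+(D\setminus I)$.

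The main technical point to handle carefully is the identification of $\fp$ with the correct collection of root spaces, and in particular the verification that the negative root spaces contained in $\fp$ are precisely those indexed by $\Phi^+(D\setminus I)$. This follows from the description $P = BW(D\setminus I)B$: the Levi part of $P$ is generated by $H$ together with the root subgroups for $\pm\alpha_j$, $j \in D\setminus I$, so its Lie algebra contributes $\g_{\pm\beta}$ for all $\beta \in \Phi^+(D\setminus I)$, while the unipotent radical contributes only positive root spaces outside $\Phi^+(D\setminus I)$. Once this standard structure theory of parabolic subalgebras is in place, the computation of the weights is immediate, so I do not expect a serious obstacle here; the only care needed is bookkeeping of signs and the meaning of the isomorphism ``$\simeq$'' in the statement, which is an equality of weighted sets (compasses) rather than a literal equality of subsets of $\Mo(H)$.

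Finally, I would note that the reduction in the first paragraph can be phrased cleanly using Lemma~\ref{lem:compass_prop} or simply by the $G$-equivariance of the normal bundle, but the most transparent route is the direct twisted-equivariance argument: the diffeomorphism $L_g$ identifies $\cN_{eP|G/P}$ with $\cN_{wP|G/P}$ as vector spaces, and transports the $H$-weight $\alpha$ to the $H$-weight $w(\alpha)$ because $H$ acts on the translated fiber through $\Ad(g^{-1})$, i.e. through $w^{-1}$ on weights, so that an eigenvector of weight $w(\alpha)$ at $wP$ pulls back to one of weight $\alpha$ at $eP$. Applying $w$ to the base-case compass then yields $\cC(wP, G/P, H) \simeq w\bigl(\Phi^+ \setminus \Phi^+(D\setminus I)\bigr)$, as claimed.
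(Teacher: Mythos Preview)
Your proposal is correct and follows essentially the same approach as the paper's proof: both identify $T_{G/P,wP}$ with $\Ad_w(\fg/\fp)$ as an $H$-module (you phrase this as reduction to $w=e$ plus transport of structure via $L_g$, the paper writes the isomorphism directly), then read off the weights from the standard root-space decomposition $\fg/\fp=\bigoplus_{\alpha\in\Phi^+\setminus\Phi^+(D\setminus I)}\fg_{-\alpha}$ and the rule $\Ad_w(\fg_\alpha)=\fg_{w(\alpha)}$. Your explicit handling of the sign convention in the compass definition is a helpful addition that the paper leaves implicit.
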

\begin{proof}
As above, we choose for every $w\in W$ a preimage in $N_G(H)$, and denote it by $w$. Being $\fg$ and $\fp$ respectively the Lie algebra of $G$ and $P$, we may write isomorphisms of $H$-modules:
$$
T_{G/P,wP}\simeq \Ad_w(T_{G/P,P})\simeq \Ad_w(\fg/\fp)\simeq \fg/\!\Ad_w(\fp),
$$
Then we use the Cartan decomposition to split $\fg/\fp$ as a direct sum of $H$-eigenspaces 
$$\fp=\fh\oplus\bigoplus_{\alpha\in\Phi^+}\fg_\alpha \oplus\bigoplus_{\alpha\in\Phi^+(D\setminus I)}\fg_{-\alpha}, \qquad\fg/\fp=\bigoplus_{\alpha\in\Phi^+\setminus\Phi^+(D\setminus I)}\fg_{-\alpha}, 
$$
and conclude by noting that $\Ad_w(\fg_\alpha)=\fg_{w(\alpha)}$ for all $\alpha\in\Phi$.
\end{proof}

Let us finally consider the case in which $\fg$ is simple (abusing notation, we will say in this case that the semisimple group $G$ is {\em simple}), and let $X_G$ be the corresponding {\em adjoint variety of $G$}, that is the closed orbit of the action of $G$ in the projectivization of the adjoint representation of $G$, $X_G\hookrightarrow\PP(\g)$. The following result, in which $\beta$ denotes  the highest weight of the adjoint representation, and $v\in \fg_{\beta}$ is a nonzero eigenvector, is a consequence of Lemma \ref{lem:fixGP}.

\begin{corollary}\label{cor:adjointfix}
The set of fixed points of $X_{G}$ by the action of $H\subset G$ is:
$$
X_G^H=\{w[v],w\in W\}, 
$$
where $\mu_L(w[v])=w(\beta)$, for every $w\in W$. Moreover $X_G^H$ is bijective to the set of long roots of $G$. 
\end{corollary}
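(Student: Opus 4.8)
The plan is to deduce Corollary \ref{cor:adjointfix} directly from Lemma \ref{lem:fixGP} applied to the adjoint variety, realized as a rational homogeneous space $G/P$. First I would identify $X_G$ with $G/P$ for a suitable parabolic $P$: since $X_G\hookrightarrow\PP(\fg)$ is the closed $G$-orbit, it is the orbit of the highest weight line $[v]$, where $v\in\fg_\beta$ spans the highest root space. The stabilizer of $[v]$ in $G$ is a parabolic subgroup $P$, so $X_G\cong G/P$ and $[v]$ corresponds to the base point $eP$. By Lemma \ref{lem:fixGP}, the $H$-fixed points of $G/P$ are exactly $\{wP : w\in W\}$, which under the identification become $\{w[v] : w\in W\}$; this gives the first displayed formula.

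Next I would compute the weights $\mu_L(w[v])$. The relevant polarization $L$ is $\cO_{\PP(\fg)}(1)$ restricted to $X_G$, whose fiber at a point $[u]$ is the dual line $(\C\cdot u)^\vee$, so the $H$-weight of $L$ at a fixed point $[u]$ with $u$ an eigenvector of weight $\lambda$ is $\lambda$ (up to the usual sign convention for the linearization on $\cO(1)$, which I would fix once to match $\mu_L([v])=\beta$). Since $w[v]=[\Ad_w v]$ and $\Ad_w v$ spans $\fg_{w(\beta)}$, the weight at $w[v]$ is $w(\beta)$, giving $\mu_L(w[v])=w(\beta)$ as claimed.

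Finally, for the bijection with long roots, I would use that the weights occurring in the adjoint representation are precisely the roots $\Phi$ together with $0$ (with multiplicity $\rk G$ on the zero weight from $\fh$). The fixed points $w[v]$ are lines in root spaces, and the weights $w(\beta)$ that arise as $w$ ranges over $W$ form the $W$-orbit of the highest root $\beta$. Since $\beta$ is a long root and $W$ acts transitively on the roots of each length, the orbit $W(\beta)$ is exactly the set of long roots of $G$. The map $w[v]\mapsto w(\beta)$ is then a surjection from $X_G^H$ onto the long roots; it is a bijection because distinct root spaces $\fg_{w(\beta)}$ give distinct lines $[\Ad_w v]$, so no two fixed points can share a weight.

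The main obstacle I anticipate is bookkeeping with sign and linearization conventions to ensure $\mu_L(w[v])=w(\beta)$ comes out with the correct sign rather than $-w(\beta)$; this is purely a matter of normalizing the $H$-linearization on $\cO_{\PP(\fg)}(1)$ consistently, and once fixed the remaining content is the standard fact that $W$ acts transitively on roots of a given length, together with the transitivity of the $G$-action on highest weight lines that identifies $X_G$ with $G/P$.
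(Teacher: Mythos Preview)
Your proposal is correct and follows essentially the same route as the paper's proof: identify $X_G$ as the $G$-orbit of the highest weight line $[v]$ (hence as $G/P$), apply Lemma \ref{lem:fixGP}, use $\Ad_w(v)\in\fg_{w(\beta)}$ to compute the weights, and invoke the transitivity of $W$ on long roots. Your extra remarks on the linearization sign and on injectivity of $w[v]\mapsto w(\beta)$ via one-dimensionality of root spaces are fine and make explicit what the paper leaves implicit.
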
 

\begin{proof}
By \cite[Claim~23.52]{FH} we know that $X_G$ is the $G$-orbit of the class of a highest weight vector $[v]$  of the adjoint representation; that is $v$ is a nonzero element of $\fg_\beta$, with $\beta$ the longest positive root of $G$ with respect to a base of simple roots. Then Lemma \ref{lem:fixGP} gives us the description of $X_G^H$. The second part follows from the fact that $\Ad_w(v)\in\g_{w(\beta)}$ for every $w\in W$. We conclude by noting that $W$ acts transitively on the set of long roots of $G$ (cf. \cite[Lemma~C]{Hum2}). 
\end{proof}

\subsection{Some special downgradings of ranks one and two}\label{ssec:downgradings}
In this section we keep the same notations introduced above, and assume that $G$ is simple. 
Let us denote by $\bigtriangleup(G)\subset\Mo(H)\otimes \mathbb{R}$ the {\em root polytope of $G$}, that is the polytope generated by the roots of $G$. We will now define some special downgradings of the action of $H$ on $X_G$, that we will use later on.
 
\begin{lemma}\label{lem:reduction}
Let $G$ be a simple group of type $\DB_r$, $r\geq 3$, $\DD_r$, $r\geq 4$, $\DE_r$, $r=6,7,8$,  $\DF_4$, or $\DG_2$, and let $\alpha,\alpha'$ be two long roots of $G$ forming an angle of $2\pi/3$ radiants. Then there exists a subgroup $S_2\subset G$ isomorphic to $\SL(3)$, inducing a projection of root polytopes $\imath^*:\bigtriangleup(G)\rightarrow\bigtriangleup(S_2)$ such that $\{\imath^*(\alpha),\imath^*(\alpha')\}$ is a base of positive simple roots of $S_2$, and  such that $(\imath^*)^{-1}(\beta)$ consists of one point for every vertex $\beta$ of the hexagon $\bigtriangleup(S_2)$.
\end{lemma}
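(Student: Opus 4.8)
The plan is to realize $S_2$ from the rank-two subsystem generated by $\alpha,\alpha'$ and to identify $\imath^*$ with an orthogonal projection, after which both required properties reduce to length computations in the root lattice. Since $\alpha,\alpha'$ are long and meet at $2\pi/3$ we have $\langle\alpha,\alpha'^\vee\rangle=\langle\alpha',\alpha^\vee\rangle=-1$, so $\alpha+\alpha'=s_{\alpha'}(\alpha)$ is again a long root and $\Phi':=\{\pm\alpha,\pm\alpha',\pm(\alpha+\alpha')\}$ is a closed subsystem of type $\DA_2$, all of whose roots share the length of $\alpha$. I would set $\fh':=\langle\alpha^\vee,\alpha'^\vee\rangle$ and $\mathfrak{s}_2:=\fh'\oplus\bigoplus_{\gamma\in\Phi'}\fg_\gamma$, a subalgebra of type $\DA_2$; let $S_2\subset G$ be the connected subgroup it integrates, which we may take isomorphic to $\SL(3)$ (the subgroup generated by the root subgroups $U_{\pm\alpha},U_{\pm\alpha'}$), with maximal torus $H'=S_2\cap H$ and inclusion $\imath\colon H'\hookrightarrow H$. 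Because $S_2$ is simply connected, $\{\alpha^\vee,\alpha'^\vee\}$ is a basis of the cocharacter lattice of $H'$, so in the dual basis of $\Mo(H')$ the map reads $\imath^*\gamma=(\langle\gamma,\alpha^\vee\rangle,\langle\gamma,\alpha'^\vee\rangle)$. Fixing a $W$-invariant inner product on $\Mo(H)\otimes\R$ and identifying $\Mo(H')\otimes\R$ with $V:=\langle\alpha,\alpha'\rangle_\R$, the map $\imath^*$ becomes the orthogonal projection $\pi_V$ onto $V$.

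Since $\alpha,\alpha'\in V$ we get $\imath^*\alpha=\alpha$ and $\imath^*\alpha'=\alpha'$; these are two roots of $S_2$ at angle $2\pi/3$ with $\alpha+\alpha'\in\Phi'$, hence a base of positive simple roots, and $\bigtriangleup(S_2)=\conv(\Phi')$ is the regular hexagon. It remains to see that $\imath^*$ carries $\bigtriangleup(G)$ \emph{onto} this hexagon. The inclusion $\Phi'\subset\Phi$ gives $\bigtriangleup(S_2)\subseteq\imath^*\bigtriangleup(G)$, so I only need $\imath^*\Phi\subseteq\conv(\Phi')$. Here I would use that $\alpha,\alpha'$ are long to bound $\langle\gamma,\alpha^\vee\rangle,\langle\gamma,\alpha'^\vee\rangle\in\{-2,-1,0,1,2\}$ for every root $\gamma$; a finite inspection shows that the only lattice points of $[-2,2]^2$ lying outside the hexagon have some coordinate equal to $\pm2$, and $\langle\gamma,\alpha^\vee\rangle=\pm2$ (resp.\ $\langle\gamma,\alpha'^\vee\rangle=\pm2$) forces $\gamma=\pm\alpha$ (resp.\ $\gamma=\pm\alpha'$), which maps to an actual vertex. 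Thus no root projects outside the hexagon and $\imath^*\bigtriangleup(G)=\bigtriangleup(S_2)$, consistently with Lemma \ref{lem:downgrading}(iv) and Corollary \ref{cor:adjointfix}, by which $\bigtriangleup(G)$ is the polytope of fixed points of $X_G$ and its vertices are exactly the long roots.

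For the last assertion, fix a vertex $\beta=\imath^*\gamma_0$ of the hexagon, with $\gamma_0\in\Phi'\subset V$. Since $\beta$ is extreme in the image, $(\imath^*)^{-1}(\beta)\cap\bigtriangleup(G)$ is a face of $\bigtriangleup(G)$, whose vertices are therefore long roots (Corollary \ref{cor:adjointfix}). If $\gamma$ is a long root with $\imath^*\gamma=\beta$, then $\pi_V(\gamma)=\gamma_0$, and the orthogonal decomposition $\gamma=\gamma_0+(\gamma-\pi_V\gamma)$ gives $|\gamma|^2=|\gamma_0|^2+|\gamma-\pi_V\gamma|^2$; as $\gamma$ and $\gamma_0$ are both long, this forces $\gamma=\gamma_0$. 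Hence the face reduces to the single point $\gamma_0$, which is precisely the statement that $(\imath^*)^{-1}(\beta)$ consists of one point.

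I expect the step needing most care to be showing the image is \emph{exactly} the hexagon rather than a larger polygon; the uniform finite pairing check works precisely because $\alpha,\alpha'$ are long, and this is where the exclusion of type $\DC$ (in which no two long roots meet at $2\pi/3$, so the hypothesis is vacuous) enters. The other two assertions then follow from the clean Pythagorean length argument, uniformly across the listed types $\DB_r,\DD_r,\DE_r,\DF_4,\DG_2$.
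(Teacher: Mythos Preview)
Your route differs from the paper's in a pleasant way: the paper first uses the Weyl group to place $\alpha,\alpha'$ among the simple roots (treating $\DG_2$ separately via Borel--de Siebenthal), realises $S_2$ as the semisimple part of the Levi of a parabolic, and then argues that roots outside $\Phi'$ project to the interior of the hexagon because all elements of $\Phi'$ are long. Your uniform Pythagorean argument for the one-point fibres is cleaner and type-independent, and your pairing bound $\langle\gamma,\alpha^\vee\rangle\in\{-1,0,1\}$ unless $\gamma=\pm\alpha$ is an efficient way to see that $\imath^*\Phi$ lands in the closed hexagon.

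There is, however, a genuine gap. You assert that the connected subgroup $S_2\subset G$ integrating $\mathfrak{s}_2$ ``may be taken'' isomorphic to $\SL(3)$, and then invoke simple connectedness to say that $\{\alpha^\vee,\alpha'^\vee\}$ is a basis of the cocharacter lattice of $H'$. But $S_2$ is a \emph{specific} subgroup of $G$, not something you are free to replace by a cover, and a priori it could be $\PGL(3)$; this dichotomy is exactly whether $\Mo(H')$ equals the full $\DA_2$ weight lattice or only its root lattice. The isomorphism $S_2\simeq\SL(3)$ is part of the lemma's statement and is essential later (the interior lattice points $\beta_i$ of Notation~\ref{notn:downgradings} exist only in the $\SL(3)$ case). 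The paper's fix is short and fits your framework: since $\Mo(H')=\imath^*(\Mo(H))$, it suffices to exhibit a root $\delta\in\Phi\setminus\Phi'$ not orthogonal to $V$; by your own pairing argument $\imath^*\delta$ then lies in $\{-1,0,1\}^2\setminus\{0\}$, hence in the interior of the hexagon and outside the $\DA_2$ root lattice, forcing $\Mo(H')$ to be the weight lattice and $S_2\simeq\SL(3)$. Such a $\delta$ exists in every listed type (a short root in $\DG_2$ or $\DF_4$, or any simple root adjacent in the Dynkin diagram to the chosen pair in the remaining cases). Once you add this check, your proof is complete.
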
 

\begin{proof}
In the case $\DG_2$ it is known, by Borel--de Siebenthal theory \cite{Borel1949}, that the set of long roots of $\DG_2$ is a closed root subsystem of the root system of $\DG_2$ determining a subgroup $S_2\subset\DG_2$ isomorphic to $\SL(3)$. Note that $S_2$ and $\DG_2$ have the same maximal torus $H$  and the same root polytopes.

In all the remaining cases we  may always choose the long roots $\alpha,\alpha'$ forming an angle of $2\pi/3$ among the elements of the base $D=\{\alpha_1,\dots,\alpha_r\}$ of simple roots of $G$ (see Remark \ref{rem:reduction} below); let us denote them by $\alpha_i,\alpha_j$. 

We consider the subgroup $W'\subset W$ of the Weyl group $W$ of $G$ generated by the reflections $r_i$, $r_j$ (corresponding to $\alpha_i$ and $\alpha_j$), and the parabolic subgroup $P=BW'B\subset G$. We then consider a Levi decomposition of $P$, $P=U\rtimes L$, where $U$ is the unipotent radical of $P$, and $L$ is reductive,  and the commutator $S_2=[L,L]\subset L\subset P\subset G$, which is semisimple. The maximal torus of $S_2$ is $H_2:=H\cap S_2$. By construction, denoting by $\fh$, $\mathfrak{s}_2$, $\fh_2$ the Lie algebras of $H$, $S_2$ and $H_2$, by $\imath^*:\Mo(H)\otimes_\Z\R\to\Mo(H_2)\otimes_\Z\R$ the linear map induced by the inclusion $H_2\hookrightarrow H$, and by $\fg_\beta\subset\fg$ the eigenspace associated to a root $\beta$ of $G$, we may write:
$$
\mathfrak{s}_2=\fh_2\oplus \bigoplus_{\beta\in \Phi\cap (\Z \alpha_i+ \Z\alpha_j)}\fg_{\beta}.
$$ 
Then the root system of $S_2$ is $\Phi_{S_2}=\imath^*\{\pm\alpha_i,\pm\alpha_j,\pm(\alpha_i+\alpha_j)\}\subset \Mo(H_2)\otimes_\Z\R$, and $\mathfrak{s}_2$ is isomorphic to $\fsl_3$. In particular, the subgroup $S_2\subset G$ is isomorphic either to $\SL(3)$ or to $\PGL(3)$, and $\dim H_2=2$. 

Note that $\imath^*$ is an orthogonal projection and sends the lattice $\Mo(H)$ to $\Mo(H_2)$. Moreover, since all the roots in $\Phi\cap (\Z \alpha_i+ \Z\alpha_j)$ are long, then $\imath^*$  sends any root of $G$ which is not in $\Phi\cap (\Z \alpha_i+ \Z\alpha_j)$ to the interior of $\bigtriangleup(S_2)$. This shows that $\bigtriangleup(G)\rightarrow\bigtriangleup({S_2})$ has one point fibers over the vertices of the hexagon $\bigtriangleup({S_2})$.

Finally we note that in all the cases there exists a root of $G$ not in $\Phi\cap (\Z \alpha_i+ \Z\alpha_j)$ and not orthogonal to the subspace generated by $\alpha_i,\alpha_j$, hence its projection to $\Mo(H_2)\otimes_\Z\R$ is a nonzero lattice point in the interior of $\bigtriangleup({S_2})$. This shows that $\Mo(H_2)$ contains properly the root lattice of $S_2$, so necessarily ${S_2}\simeq\SL(3)$. 
\end{proof}

\begin{remark}\label{rem:reduction}
In the setting of Lemma \ref{lem:reduction}, for the cases different from $\DG_2$ we may use the following particular choice of the pairs $(i,j)$ defining $S_2\subset G$:

\begin{table}[h!]
\begin{tabular}{|c||c|c|c|c|c|c|}
\hline
 $\fg$&$\DB_r$&$\DD_r$&$\DE_6$&$\DE_7$&$\DE_8$&$\DF_4$\\\hline\hline
$\,(i,j)\,$&$\,(1,2)\,$&$\,(1,2)\,$&$\,(4,2)\,$&$\,(3,1)\,$&$\,(7,8)\,$&$\,(2,1)\,$\\\hline
\end{tabular}
\end{table}
Here we are following the standard reference \cite[p.~58]{Hum2} for the numbering of nodes of the corresponding Dynkin diagrams. 
\end{remark}

\begin{notation}\label{notn:downgradings}
Let us now describe the downgradings of the action of $H\subset G$ on the adjoint variety $X_G$ that we are going to consider in this paper.\par
\medskip

\begin{itemize}
\item[($S_2$)] With the notation of Lemma \ref{lem:reduction}, we will consider  a subgroup $S_2\subset G$, isomorphic to $\SL(3)$, a maximal torus $H_2\subset S_2$ (obtained by intersecting the maximal torus $H\subset G$ with $S_2$) given by the choice of two long roots $\alpha,\alpha'$ as in Remark \ref{rem:reduction}, for the cases different from $\DG_2$. In Figure \ref{fig:Hex0} we represented the points of $\Mo(H_2)\cap \bigtriangleup(S_2)$, which are the possible images of the roots of $G$ via $\imath^*$.\par
\medskip
\item[($S_1$)] Given a root $\alpha\in\Phi_{S_2}$ we may find a unique subgroup $S_1\subset S_2$ isomorphic to $\SL(2)$ whose Lie algebra contains the eigenspace $\fg_{\alpha}$. We denote the maximal torus of $S_1$ by $H_1= H_2\cap S_1$, and consider the projection $\pi^*:\Mo(H_2)\to\Mo(H_1)$ associated to the inclusion $\pi:H_1\hookrightarrow H_2$; it sends $\bigtriangleup(S_2)$ to  $\bigtriangleup(S_1)=[-2,2]$, and by choosing an appropriate isomorphism $\Mo(H_1)\simeq\Z$ we may write $\pi^*(\alpha)=2$ (see Figure \ref{fig:Hex0}).
\end{itemize}
\end{notation}

\begin{figure}[h!]
\includegraphics[height=4.5cm]{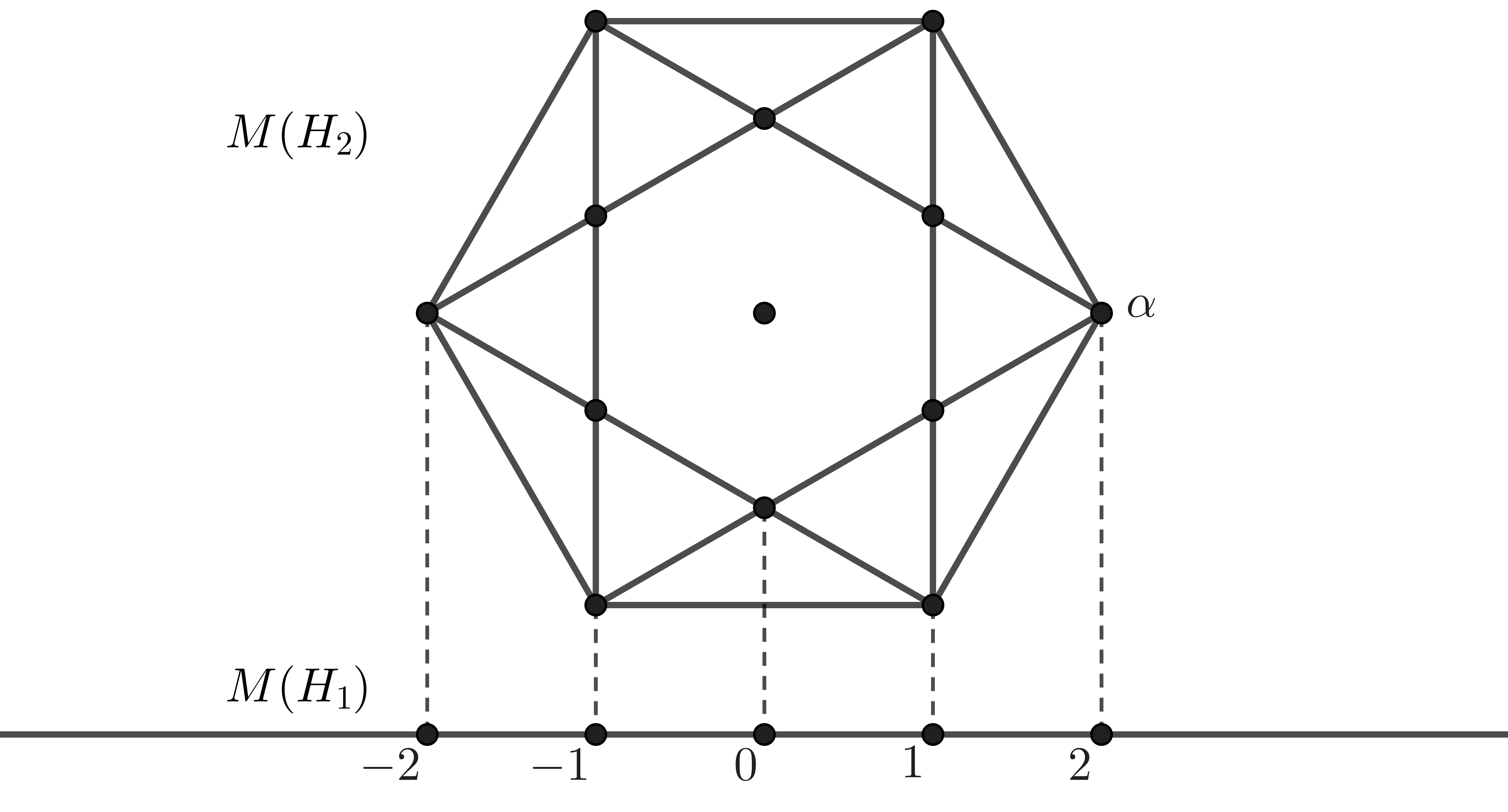} 
\caption{Lattice points in the polytopes $\bigtriangleup(S_2)$ and $\bigtriangleup(S_1)$.}
\label{fig:Hex0}
\end{figure}

Let us denote the corresponding lattices of weights by $M:=\Mo(H)$, $M_2:=\Mo(H_2)$, $M_1:=\Mo(H_1)$, and by $\imath^*:M\to M_2$, $\pi^*:M_2\to M_1$ the induced projections. The adjoint actions of $H_2$ and $H_1$ on $\g$ provide gradings of $\g$ with respect to $M_2$ and $M_1$:
$$
\g=\bigoplus_{m\in M_2}\g^2_m=\bigoplus_{m\in M_2}\left(\bigoplus_{\substack{\gamma\in\Phi\cup\{0\}\\\imath^*(\gamma)=m}}\g_\gamma\right),\qquad \g=\bigoplus_{m\in M_1}\g^1_m=\bigoplus_{m\in M_1}\left(\bigoplus_{\substack{\gamma\in\Phi\cup\{0\}\\(\imath\circ\pi)^*(\gamma)=m}}\g_\gamma\right)
$$
where we recall that $\Phi\subset M$ is the root system of $G$.
By our choice of $S_1\subset S_2\subset G$ we have that, for $i=1,2$, $\dim(\fg^i_m)=1$ whenever $m$ is a root of $S_i$: these are precisely the vertices of the root polytope $\bigtriangleup(S_i)$. The rest of values of $m\in M_i$ for which $\fg_m^i\neq 0$ correspond to inner points of $\bigtriangleup(S_i)$. 

Furthermore, the vector spaces $\fg^i_m$ are representations of the subalgebra $\fg_0^i\subset\fg$ of elements of $M_i$-degree $0$ and of its semisimple part $\g_0^{i,\ss}$, $i=1,2$. 
With the choice of the projection $\imath:M\to M_2$ presented in Remark \ref{rem:reduction}, one obtains the following description of those subalgebras:

\begin{table}[h!]
\begin{tabular}{|c||c|c|c|c|c|c|c|}
\hline
 $\fg$&$\DB_r$&$\DD_r$&$\DE_6$&$\DE_7$&$\DE_8$&$\DF_4$&$\DG_2$\\\hline\hline
$\,(i,j)\,$&$(1,2)$&$(1,2)$&$(2,4)$&$\,(1,3)\,$&$\,(7,8)\,$&$\,(1,2)\,$&\\\hline
$\fg_0^{1,\ss}$&$\,\DA_1\times\DB_{r-2}\,$&$\,\DA_1\times\DD_{r-2}\,$&$\DA_5$&$\DD_6$&$\DE_7$&$\DC_3$&$\,\,\DA_1\,\,$\\\hline
$\fg_0^{2,\ss}$&$\DB_{r-3}$&$\DD_{r-3}$&$\,\DA_2\times\DA_2\,$&$\DA_5$&$\DE_6$&$\DA_2$&$0$\\\hline
\end{tabular}
\end{table}

\subsection{Freudenthal varieties
}\label{ssec:Freudenthal}

We will now consider the adjoint variety $X_G$ and compute the fixed point components of the actions of the tori $H_2$ and $H_1$ introduced above (see Notation \ref{notn:downgradings}). In the cases in which $G$ is exceptional, the varieties that we will obtain are essentially those obtained in the Freudenthal magic square studied by Landsberg and Manivel in \cite{LM01}, see Table \ref{tab:Freudenthal}. In particular we will see that certain fixed point components for the torus $H_1$ coincide with the varieties of type ($2$) and ($3$) obtained in the classification of $\C^*$-actions of equalized bandwidth three actions with isolated sink and source (cf. Theorem \ref{thm:bw3}). These varieties appear in the literature as {\em Freudenthal varieties} (cf. \cite{KaYa, LM01}); in our setting they will be used to recognize adjoint varieties among contact manifolds  by means of equivariant K-theory, as we will see in Section \ref{contact_iso}.   

We start by noting that 
$$X_G^{H_2}=\bigsqcup_{m\in\bigtriangleup(S_2)\cap M_2} Y_m,\qquad X_G^{H_1}=\bigsqcup_{m\in\bigtriangleup(S_1)\cap M_1} Z_m,$$
where:
$$\begin{array}{l}
Y_m:=X_{G }\cap \P(\fg_m^{2}),\quad m\in\bigtriangleup(S_2)\cap M_2,\\[2pt]
Z_m:=X_{G }\cap \P(\fg_m^{1}),\quad m\in\bigtriangleup(S_1)\cap M_1.
\end{array}
$$

By construction, $Y_m$ is an isolated point when $m$ is a vertex of  $\bigtriangleup(S_2)$, and the same holds for the vertices of $\bigtriangleup(S_1)$. We will now study the fixed point components corresponding to inner lattices points. 

Let us first describe, for $i=1,2$, a subgroup $G_0^{i,\ss}\subset G$, $i=1,2$, whose Lie algebra is $\fg_{0}^{i,\ss}$. We will consider only the case $i=2$, being $i=1$ analogous; we follow the lines of argumentation of \cite[Section~2.3.1]{Tev}. We consider a $\Z$-basis $\{\mu_1,\mu_2\}$ of $M_2^\vee$ and, for each $j=1,2$, the derivation $D_j:\fg\to \fg$ defined by $D_j(x)=\mu_j(m)x$ for every $x\in \fg_m^2$, and for every $m\in M_2$. Since $\fg$ is semisimple, each $D_j$ is an inner derivation and there exists $\xi_j\in \fg^2_0$ such that $D_j=\ad_{\xi_j}$. Then $\fg^2_0$ can be described as $\{x\in\fg|\,\,[x,\xi_j]=0\,\,\mbox{for }j=1,2\}$, or as the Lie algebra of the subgroup:
$$
G^2_0:=\{g\in G|\,\,\Ad_g(\xi_j)=\xi_j\,\,\mbox{for }j=1,2\}^{\circ}\subset G.
$$
Then we may define $G^{2,\ss}_{0}$ as the commutator $[G^2_0,G^2_0]$.

\begin{lemma}\label{lem:innerRH}
For every $m\in M_2\cap\bigtriangleup(S_2)$ (resp. $m\in M_1\cap\bigtriangleup(S_1)$) the subvariety 
$Y_m\subset X_G$  (resp. $Z_m$) is a finite union of $G_0^{2,\ss}$-(resp. $G_0^{1,\ss}$-)  homogeneous varieties. 
\end{lemma}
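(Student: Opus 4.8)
The plan is to exhibit, for each relevant $m$, the subvariety $Y_m$ (resp.\ $Z_m$) as a finite disjoint union of $G_0^{2,\ss}$- (resp.\ $G_0^{1,\ss}$-) orbits, each of which is forced to be closed and hence is a homogeneous projective variety. I treat the case of $Y_m$ and $H_2$ in detail; the argument for $Z_m$ and $H_1$ is verbatim the same. First I record the basic equivariance: the group $G_0^2=C_G(H_2)^\circ$ is connected and reductive (being the identity component of the centralizer of a subtorus of the maximal torus $H\subset G$), it contains $H$, and it preserves the $M_2$-grading of $\fg$, so it acts on each $\P(\fg_m^2)$; since $G_0^2\subset G$ preserves $X_G$, it acts on $Y_m=X_G\cap\P(\fg_m^2)$, and so does $G_0^{2,\ss}=[G_0^2,G_0^2]$. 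I also note that the central subtorus $H_2\subset G_0^2$ acts on $\fg_m^2$ through the single character $m$, hence by homotheties, so that $H_2$ acts trivially on $Y_m$.

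The core of the proof is a tangent space computation at the $H$-fixed points lying in $Y_m$. By Corollary \ref{cor:adjointfix} these are finitely many points of the form $x=w[v]$, each an isolated $H$-fixed point of $X_G$; by Lemma \ref{lem:downgrading}(iii) the $H$-fixed points of $Y_m$ are exactly $X_G^H\cap Y_m$, so $x$ is in particular an isolated $H/H_2$-fixed point of $Y_m$. Identifying $T_{X_G,x}\simeq\fg/\Ad_w(\fp)$ as an $H$-module via the orbit map $G\to X_G$, I use Lemma \ref{lem:compassGP} to describe the $H$-weights occurring and Lemma \ref{lem:compass_prop}(ii), applied to the pair $Y=\{x\}\subset Y'=Y_m$, to identify $T_{Y_m,x}$ with the $\imath^*$-weight-zero part of $T_{X_G,x}$. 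This weight-zero part is precisely the image of $\fg_0^2=\fh\oplus\bigoplus_{\imath^*\gamma=0}\fg_\gamma$ in $\fg/\Ad_w(\fp)$, which is the tangent space to the orbit $G_0^2\cdot x$. Hence $G_0^2\cdot x$ is open in the connected component of $Y_m$ through $x$.

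It then remains to upgrade ``open orbit'' to ``whole component''. Each connected component $Y$ of $Y_m$ is smooth and irreducible and is preserved by the connected group $H$, so by the Borel fixed point theorem it contains an $H$-fixed point $x$, whose orbit $G_0^2\cdot x$ is open in $Y$ by the previous step. If this orbit were proper, its complement in $Y$ would be a nonempty closed $G_0^2$-invariant, hence $H$-invariant, subset of strictly smaller dimension, which would itself contain an $H$-fixed point $x'$; but then $G_0^2\cdot x'$ would again be open in $Y$, contradicting its containment in a lower-dimensional set. Thus $Y=G_0^2\cdot x$. Finally, since the connected center $Z(G_0^2)^\circ$ lies in the maximal torus $H$ of $G_0^2$ and therefore fixes the $H$-fixed point $x$, one has $G_0^2\cdot x=G_0^{2,\ss}\cdot x$, so $Y$ is $G_0^{2,\ss}$-homogeneous; as $Y_m$ has only finitely many components, the claim follows.

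The step I expect to be the \emph{main obstacle} is the tangent-space identification in the second paragraph: one must verify that no tangent direction to $Y_m$ is lost or gained, that is, that the $\imath^*$-weight-zero part of the compass of $x$ accounts for exactly $T_{Y_m,x}$ and no more. This is precisely where Lemma \ref{lem:compass_prop} and the isolatedness of the $H$-fixed points (which guarantees there is no spurious zero-weight contribution to $T_{Y_m,x}$ beyond the image of $\fg_0^2$) are essential; everything else is a standard open-orbit bootstrap via the Borel fixed point theorem.
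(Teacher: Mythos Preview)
Your proof is correct and complete, but it takes a genuinely different route from the paper's own argument. The paper, following Tevelev, performs the tangent-space computation at \emph{every} point $x\in Y_m$: it shows directly that
\[
\ad_x(\fg)\cap\fg_m^2=\ad_x(\fg_0^2),
\]
which yields $T_{X_G,x}\cap T_{\P(\fg_m^2),x}=T_{O_x,x}$ for all $x$, and then concludes by semicontinuity of the fiber dimension of $T_{X_G}\cap T_{\P(\fg_m^2)}$ over $Y_m$ (a smaller orbit in the closure would violate upper semicontinuity). You, by contrast, carry out the tangent-space identification only at the finitely many $H$-fixed points---where it reduces to the compass machinery of Lemmas~\ref{lem:compassGP} and~\ref{lem:compass_prop}---and then bootstrap via the Borel fixed point theorem: any proper boundary of the open orbit would itself contain an $H$-fixed point, whose $G_0^2$-orbit would again have to be open, a contradiction. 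Your approach has the virtue of staying entirely within the combinatorial framework the paper has already set up (weights, compasses, downgrading), and it avoids computing $\ad_x$ for non-semisimple $x$; the paper's approach is shorter and more self-contained Lie theory, and it proves the slightly stronger fact that every $G_0^2$-orbit in $Y_m$ is closed, not just that the components are single orbits.
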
 

\begin{proof}
This is an adaptation of \cite[Theorem~2.6]{Tev} to our setting. We will show that $Y_m$ is a finite union of closed $G_0^2$-orbits, from which the statement follows; the case of $Z_m$ is analogous. By construction, the varieties $Y_m$ are $G_0^{2}$-invariant, so they are unions of $G_0^{2}$-orbits, and we only need to check that all these orbits are closed. To this end, we note first that for every $x\in Y_m$ we have: 
\begin{equation}\label{eq:tangents}
T_{X_G,x}\cap T_{\P(\fg^2_m),x}=T_{O_x,x},
\end{equation} 
where $O_x$ denotes the $G_0^{2}$-orbit of $x$. This follows by quotienting by $\langle x\rangle $ the equalities:
$$
\ad_x(\fg)\cap\fg^2_m=\ad_x\left(\bigoplus_{k\in\bigtriangleup(S_2)\cap M_2}\fg^2_k\right)\cap\fg^2_m=\bigoplus_{k}\ad_x(\fg^2_k)\cap\fg^2_m=\ad_x(\fg^2_0).
$$
Now if an orbit $O$ were not closed, its boundary would contain an orbit $O'$ of smaller dimension, and by the equality (\ref{eq:tangents}) one has that $T_{X_G}\cap T_{\P(\fg_m)}$ would have fibers of smaller dimension on the points of $O'$, contradicting semicontinuity. 
\end{proof}

Given a fixed point component $\Lambda$ of $X_{G }^{H_i}$, $i=1,2$, of weight $m$, we may consider the action of the torus $H/H_i$ on it. By Lemma \ref{lem:downgrading} this action has only isolated fixed points whose weights are the long roots of $G$ of $\Mo(H_i)$-degree equal to $m$. In particular, this shows that $\Lambda$ is the $G_0^{i,\ss}$-orbit of the class of an element $x\in \fg_\alpha$, where $\fg_\alpha\subset \fg^i_m$ and $\alpha$ is a long root of $G$. For instance, if $G$ is of type $\DF_4$ such a root does not exist in $\fg^2_0$, so $Y_0$ is empty in this case.  Moreover, this implies that the fixed point components of $X_G^{H_i}$ are the minimal orbits of the irreducible representations of $G^{i,\ss}_0$ with highest weights equal to one of the long roots of $G$. This allows us to study case by case the fixed locus of $X_G$ under the action of $H_i$. 

Table \ref{tab:Freudenthal} contains the description of the varieties $Y_m$, $Z_m$, for $m=0$ and for nonzero inner points $m\in\bigtriangleup(S_i)\cap M_i$:

\setlength{\tabcolsep}{2pt}
\begin{table}[h!]
\begin{tabular}{|c||c|c|c|c|c|}
\hline
 $\fg$& $\,X_G\,$&$Z_m$&$Z_0$&$Y_m$&$Y_0$\\\hline\hline
 $\DB_r$ 
 &$\DB_r(2)$&$\,\DA_1(1)\!\times\!\DB_{r-2}(1)\,$&$\,\DA_1(1)\sqcup\DB_{r-2}(2)\,$&$\star\sqcup\DB_{r-3}(1)$&$\DB_{r-3}(2)$\\\hline
 $\DD_r$ 
 &$\DD_r(2)$&$\,\DA_1(1)\!\times\!\DD_{r-2}(1)\,$&$\,\DA_1(1)\sqcup\DD_{r-2}(2)\,$&$\star\sqcup\DD_{r-3}(1)$&$\DD_{r-3}(2)$\\\hline
 $\DE_6$&$\DE_6(2)$&$\DA_5(3)$&$\DA_5(1,5)$&$\DA_2(2)\times\DA_2(1)$&$\,\DA_2(1,2)\!\sqcup \!\DA_2(1,2)\,$\\\hline
 $\DE_7$&$\DE_7(1)$&$\DD_6(6)$&$\DD_6(2)$&$\DA_5(4)$&$\,\DA_5(1,5)\,$\\\hline
 $\DE_8$&$\,\DE_8(8)\,$&$\DE_7(7)$&$\DE_7(1)$&$\DE_6(6)$&$\DE_6(2)$\\\hline
$\DF_4$&$\DF_4(1)$&$\DC_3(3)$&$\DC_3(1)$&$\,v_2(\DA_2(2))\,$&$\emptyset$\\\hline
$\DG_2$&$\DG_2(2)$&$v_3(\DA_1(1))$&$\emptyset$&$\star$&$\emptyset$\\\hline
\end{tabular}\vspace*{4mm}\caption{\label{tab:Freudenthal}Inner fixed point components corresponding to inner lattice points $0, m\in \bigtriangleup(S_i)\cap M_i$, $m\neq 0$. In the cases $\DB$ and $\DD$ the index $r$ is, respectively, bigger than or equal to $3$ and $4$. The symbol $v_n$ indicates the $n$-th Veronese embedding.}
\end{table}

%%%%%%%%%%%%%%%%%%%%%%%

%%%%%%%%%%%%%%%%%%%%%%%

\section{Contact manifolds with a compatible  torus action}\label{sec:contactaction}

In this section we collect some basic background on contact manifolds and torus actions on them (cf. \cite{KPSW}, \cite[$\S$4.1]{BWW}). 
 
\subsection{Contact manifolds}\label{ssec:contactmfd}

A {\em contact manifold} is a smooth projective variety $X$ of dimension $2n+1$ together with a line bundle $L$ (for short, we will say that the pair $(X,L)$ is a contact manifold) fitting in an exact sequence of vector bundles: 
\begin{equation}
0\ra{F}\lra{T_X}\stackrel{\sigma}{\lra}{L}\ra 0
\label{eq:contactstr}
\end{equation} 
such that the corresponding O'Neill tensor induces a skew-symmetric isomorphism
\begin{equation}
F\stackrel{\simeq}{\lra} F^\vee\otimes L.\label{eq:contactiso}
\end{equation}
The distribution $F\subset T_X$, the line bundle $L$ and the morphism $\sigma$ are called, respectively, the {\em contact distribution}, the {\em contact line bundle} and the {\em contact form} of $(X,L)$. Note that, combining (\ref{eq:contactstr}) and (\ref{eq:contactiso}) we may write
\begin{equation}\omega_X=L^{-(n+1)},\quad \det(F)=L^n.\label{eq:contact}\end{equation} 

\begin{remark}\label{rem:contactfano}
If a contact manifold $X$ is a Fano manifold, it is known (cf. \cite[Proposition~2.13]{KPSW}) that either $\Pic(X)\simeq \Z$, or $X\simeq \P(\Omega_{\P^{n+1}})$. Note also that Equation (\ref{eq:contact}) tells us that  $X$ is Fano if and only if  $L$ is ample. In the case in which $X\simeq \P(\Omega_{\P^{n+1}})$, we have that $L=\cO_{\P(\Omega_{\P^{n+1}})}(1)$; if $\Pic(X)\simeq \Z$, then $L$ is the ample generator of $\Pic(X)$, except for the case $X\simeq \P^{2n+1}$, where $L \simeq \cO_{\P^{2n+1}}(2)$.\end{remark}

The following Lemma allows us to identify $\HH^0(X,L)$ with the adjoint representation of the group of automorphisms of a Fano contact manifold $X$ different from $\P^{2n+1}$ and $\P(\Omega_{\P^{n+1}})$; we denote by $\CAut(X)\subset \Aut(X)$ the group of automorphisms of a contact manifold $X$ preserving the contact structure.

\begin{lemma}\label{lem:AutCAut}
Let $(X,L)$ be a contact Fano manifold with $\dim{X}=2n+1$, and $\Pic X=\ZZ L$. Then $\CAut(X)= \Aut(X)$. In particular $\Aut (X)$ extends to an action on $L$ so that the contact map $\sigma$ is $\Aut (X)$-equivariant, and we have $\Aut (X)$-equivariant isomorphisms $\Lie(\Aut (X))\simeq\HH^0(X,T_X)\simeq \HH^0(X,L)$.
\end{lemma}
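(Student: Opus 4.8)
The plan is to reduce the whole statement to the single vanishing $\HH^0(X,F)=0$, which I expect to be the only serious input. First I would observe that any $\phi\in\Aut(X)$ preserves the canonical class, and since $\omega_X=L^{-(n+1)}$ by (\ref{eq:contact}) while $\Pic X=\ZZ L$ is torsion free, this forces $\phi^*L\simeq L$. Fixing such isomorphisms, $\Aut(X)$ therefore acts linearly on $\HH^0(X,\Omega^1_X\otimes L)=\Hom(T_X,L)$, the space containing the contact form $\sigma$.

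The heart of the argument is that this space is one-dimensional. Dualizing the contact sequence (\ref{eq:contactstr}) and twisting by $L$ yields a short exact sequence $0\to\cO_X\to\Omega^1_X\otimes L\to F^\vee\otimes L\to 0$ in which the inclusion of $\cO_X$ carries the constant $1$ to $\sigma$; by the contact isomorphism (\ref{eq:contactiso}) the quotient $F^\vee\otimes L$ is identified with $F$. Passing to cohomology gives $\HH^0(X,\Omega^1_X\otimes L)\simeq\HH^0(X,\cO_X)=\C$ as soon as $\HH^0(X,F)=0$. This vanishing---no global vector field is tangent to the contact distribution---is the main obstacle; I would take it from the theory of Fano contact manifolds with $\Pic X=\ZZ L$ (cf.~\cite{KPSW}), the hypothesis $\Pic X=\ZZ L$ being exactly what rules out the two exceptional cases $\P^{2n+1}$ and $\P(\Omega_{\P^{n+1}})$ of Remark \ref{rem:contactfano}. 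Granting it, $\sigma$ spans $\HH^0(X,\Omega^1_X\otimes L)$, so every $\phi\in\Aut(X)$ multiplies $\sigma$ by a scalar and hence fixes $F=\ker\sigma$; this gives $\Aut(X)\subseteq\CAut(X)$, and as the reverse inclusion is tautological, $\Aut(X)=\CAut(X)$.

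To obtain the linearization and the equivariance of $\sigma$ simultaneously---sidestepping the usual obstruction to linearizing an invariant line bundle---I would exploit that $T_X$ carries its canonical $\Aut(X)$-linearization coming from the differential of the action. Since $F\subset T_X$ is now $\Aut(X)$-invariant, the group acts on the quotient $T_X/F$, and the identification $T_X/F\simeq L$ induced by $\sigma$ equips $L$ with a linearization for which $\sigma$ is $\Aut(X)$-equivariant by construction.

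Finally, for the two isomorphisms I would return to (\ref{eq:contactstr}). The identification $\Lie(\Aut(X))\simeq\HH^0(X,T_X)$ is standard and equivariant for the adjoint action. Taking cohomology of (\ref{eq:contactstr}) and using $\HH^0(X,F)=0$ shows that $\sigma_*\colon\HH^0(X,T_X)\to\HH^0(X,L)$ is injective; for surjectivity I would invoke the classical contact-Hamiltonian lift, which assigns to each section of $L$ a vector field mapped to it by $\sigma$, providing a right inverse to $\sigma_*$. Thus $\sigma_*$ is an isomorphism, and it is $\Aut(X)$-equivariant by the linearization just constructed, so the composite $\Lie(\Aut(X))\simeq\HH^0(X,T_X)\simeq\HH^0(X,L)$ is the desired equivariant isomorphism.
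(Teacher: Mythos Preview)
Your overall strategy is coherent and the deductions you draw from the vanishing $\HH^0(X,F)=0$ are correct, but the approach differs from the paper's and your key input is not where you point. The paper argues geometrically: since $\Pic X=\ZZ L$ excludes $X\simeq\P^{2n+1}$, Kebekus's theory of contact lines \cite{Ke01} applies; the VMRT of the covering family of contact lines spans $\P(F)$ at a general point, so the distribution $F$ is intrinsically determined by $X$ and therefore preserved by every automorphism. Once $\CAut(X)=\Aut(X)$ is established this way, Beauville's Proposition~1.1 \cite{BeauvilleNilp} supplies the linearization and the equivariant isomorphisms directly.

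The gap in your argument is the justification of $\HH^0(X,F)=0$. This vanishing is not in \cite{KPSW}, and it is not a soft fact: on $\P^{2n+1}$ (where $L=\cO(2)$, so the hypothesis $\Pic X=\ZZ L$ fails) one computes $\dim\HH^0(F)=n(2n+3)>0$. The assertion $\HH^0(X,F)=0$ is precisely the Lie-algebra version of the equality $\CAut(X)=\Aut(X)$ you are trying to establish---a nonzero section of $F$ generates a one-parameter group of automorphisms not preserving the contact structure---so taking it as a black box begs the question. I do not know a proof of this vanishing under the hypothesis $\Pic X=\ZZ L$ that avoids the uniqueness of the contact distribution, and the standard route to that uniqueness is exactly Kebekus's VMRT argument. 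So once your black box is properly filled, your proof routes through the paper's.
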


\begin{proof}
Since $X\not \simeq \P^{2n+1}$ by Remark \ref{rem:contactfano}, applying  \cite[Sect. 2.3]{Ke01}, we know that $X$ is covered by a complete unsplit family of rational curves of degree $1$ with respect to $L$, called {\em contact lines}. By \cite[Theorem 4.4, Corollary 4.5]{Ke01},  the VMRT of this family spans $\P(F)$, and so the contact structure $F$ is unique. In particular every automorphism of $X$ preserves $F$, hence $\Aut(X)=\CAut(X)$. The second part of the statement follows by \cite[Proposition 1.1]{BeauvilleNilp}.
\end{proof}

Let us recall that a smooth subvariety $Y$ of a contact manifold $X$ as above is called {\em isotropic} if $T_Y\subset F_{|Y}$.  By definition, an isotropic subvariety of $X$ has dimension at most $n$; an isotropic subvariety of dimension exactly $n$ is called a {\em Legendrian} subvariety of $X$. 
For a Legendrian subvariety $Y \subset X$ we have a commutative diagram with short exact rows and columns:
$$
\xymatrix@C=35pt{T_{Y}\ar@{=}[r]\ar[d]&T_{Y}\ar[r]\ar[d]&0\ar[d]\\
F_{|Y}\ar[r]\ar[d]&(T_X)_{|Y}\ar[r]\ar[d]&L_{|Y}\ar@{=}[d]\\
\Omega_{Y_{\pm}}\otimes L_{|Y}\ar[r]&\cN_{Y|X}\ar[r]&L_{|Y}}
$$

The lower row of the diagram can be interpreted as follows:

\begin{proposition}\label{prop:Atiyah}
Let $(X,L)$ be a contact manifold with $\Pic X=\ZZ L$, and let $Y$ be a Legendrian subvariety.
Then the normal bundle $\cN_{Y|X}$  is isomorphic to the nontrivial extension of  $L_{|Y}$ by $\Omega_{Y} \otimes L_{|Y}$ corresponding to the Atiyah extension class $c_1(L_{|Y}) \in H^1(Y, \Omega_{Y})$. 
\end{proposition}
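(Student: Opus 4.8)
The plan is to read off the exact sequence defining $\cN_{Y|X}$ from the commutative diagram displayed just before the statement, and then to identify the class of the resulting extension with the Atiyah class of $L_{|Y}$.

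First I would justify the left column $0\to T_Y\to F_{|Y}\to \Omega_Y\otimes L_{|Y}\to 0$ of that diagram. The contact isomorphism (\ref{eq:contactiso}) endows $F$ with a nondegenerate skew form $\omega\colon \Wedge{2}F\to L$, and the Legendrian condition $T_Y\subset F_{|Y}$ together with $\dim Y=n$ says precisely that $T_Y$ is Lagrangian for $\omega$. Hence the contraction $F_{|Y}\to \cHom(T_Y,L_{|Y})=\Omega_Y\otimes L_{|Y}$, $v\mapsto \omega(v,-)_{|T_Y}$, is surjective (nondegeneracy of $\omega$ gives $F_{|Y}\cong F_{|Y}^\vee\otimes L_{|Y}$, and the restriction $F_{|Y}^\vee\to T_Y^\vee$ is onto since $T_Y$ is a subbundle) with kernel the symplectic orthogonal $T_Y^{\perp_\omega}=T_Y$. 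Feeding this left column, the middle column (the normal bundle sequence) and the middle row (the restriction of (\ref{eq:contactstr})) into the snake lemma produces the bottom row $0\to \Omega_Y\otimes L_{|Y}\to \cN_{Y|X}\to L_{|Y}\to 0$, whose class lives in $\Ext^1(L_{|Y},\Omega_Y\otimes L_{|Y})\cong \HH^1(Y,\Omega_Y)$.

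The heart of the matter, and the step I expect to be the main obstacle, is to show that this extension class equals $c_1(L_{|Y})$, i.e.~the Atiyah class $\mathrm{at}(L_{|Y})$. The key observation is that the dual inclusion $L_{|Y}^\vee\hookrightarrow \cN_{Y|X}^\vee$ is nothing but the restriction of the contact form viewed as $\sigma^\vee\colon L^\vee\to \Omega_X$: since $T_Y\subset F=\ker\sigma$, this map kills $T_Y$ and therefore factors through the conormal bundle $\cN_{Y|X}^\vee=\ker\big((\Omega_X)_{|Y}\to \Omega_Y\big)$. Thus the extension is governed by the ``derivative of $\sigma$'', which is exactly the O'Neill tensor. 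I would make this quantitative by a local computation: trivialize $L$ on a cover $\{U_\alpha\}$ with transition functions $g_{\alpha\beta}$ and write the contact form locally as honest one-forms $\theta_\alpha=g_{\alpha\beta}\theta_\beta$; choosing the local splittings of the bottom row determined by the Reeb vector fields of the $\theta_\alpha$, the resulting \v{C}ech $1$-cocycle with values in $\Omega_Y$ comes out to be $d\log g_{\alpha\beta}$, which represents $c_1(L_{|Y})$. Equivalently one can invoke functoriality of the Atiyah class along $j\colon Y\hookrightarrow X$ together with the identification of the normal sequence with the restriction of the first-jet sequence of $L$. Matching signs and keeping track of the splittings is where the care is needed.

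Finally, nontriviality is immediate once the class is identified: $L_{|Y}$ is ample (as $L$ is ample and $\Pic X=\ZZ L$) and $\dim Y=n\geq 1$, so $c_1(L_{|Y})\neq 0$ in $\HH^1(Y,\Omega_Y)=\HH^{1,1}(Y)$, and $\cN_{Y|X}$ is the nonsplit extension this class classifies.
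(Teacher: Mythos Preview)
Your argument is correct but follows a genuinely different route from the paper's. You first extract the extension $0\to\Omega_Y\otimes L_{|Y}\to\cN_{Y|X}\to L_{|Y}\to 0$ from the displayed diagram (which the paper already records) and then identify its class by an explicit \v{C}ech computation: lift the local frames of $L_{|Y}$ via the holomorphic Reeb fields $R_\alpha$ of the local contact forms $\theta_\alpha$, and check that the transition defect $[R_\alpha]-g_{\alpha\beta}[R_\beta]$, pushed through the O'Neill isomorphism, is $\pm d\log g_{\alpha\beta}\otimes s_\alpha$. This is a valid and pleasantly concrete calculation (it really does come out to $d\log g_{\alpha\beta}$ up to sign, using $\iota_{R_\alpha}d\theta_\alpha=0$ and $\theta_\alpha = g_{\beta\alpha}\theta_\beta$), and the nontriviality step via ampleness is fine.

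The paper instead argues structurally. It introduces the Atiyah extensions $\cL$ and $\cL_Y$ of $L$ and $L_{|Y}$ and observes that the contact form endows $\cL^\vee$ with a nondegenerate skew form with values in $L$ (this is the ``symplectification'' picture from \cite[Section~2.2]{KPSW}). The Legendrian hypothesis translates into the conormal $\cN_{Y|X}^\vee\subset\cL_{|Y}$ being Lagrangian for this form; the resulting self-duality $\cL_{|Y}\simeq\cL_{|Y}^\vee\otimes L_{|Y}$ then matches the quotient $\cL_Y$ of $\cL_{|Y}$ directly with $\cN_{Y|X}\otimes L_{|Y}^{-1}$. Thus $\cN_{Y|X}\simeq\cL_Y\otimes L_{|Y}$, and the identification with the Atiyah class is immediate because $\cL_Y$ \emph{is} the Atiyah extension. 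What the paper buys is that no local splitting or cocycle chase is needed and the sign bookkeeping disappears; what your approach buys is an explicit cocycle representative and independence from the symplectic-form-on-$\cL^\vee$ fact, at the cost of the local computation you flagged as the delicate step.
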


\begin{proof} The argument presented here belongs to an unpublished manuscript of the fourth author with J. Buczy\'nski, based on \cite[Section~2]{KPSW}. Let us denote by $\cL$ and $\cL_{Y}$ the Atiyah extensions corresponding to the line bundles $L$ and $L_{|Y}$ on $X$ and $Y$, respectively; they fit into the following commutative diagram of vector bundles on $Y$ with exact rows and columns: 
$$
\xymatrix@C=35pt{\cN^{\,\vee}_{Y|X}\ar@{=}[r]\ar[d]&\cN^{\,\vee}_{Y|X}\ar[r]\ar[d]&0\ar[d]\\
{\Omega_{X}}_{|Y}\ar[r]\ar[d]&\cL_{|Y}\ar[r]\ar[d]&\cO_{Y}\ar@{=}[d]\\
\Omega_{Y}\ar[r]&\cL_{Y}\ar[r]&\cO_{Y}}
$$
We use now the fact that $\cL^\vee$ admits a nondegenerate skew-symmetric form $\cL^\vee\otimes \cL^\vee\to L$ induced by the contact form in $X$, (cf. \cite[Section~2.2]{KPSW}). Moreover, the fact that $Y \subset X$  is Legendrian (Corollary \ref{cor:weightsBW2}) implies that the subbundle $\cN^{\,\vee}_{Y|X}\subset \cL_{|Y}$ is Lagrangian with respect to it. Hence the inner vertical exact sequence in the above diagram twisted  with $L_{|Y}$ and its dual fit in the following commutative diagram with exact rows: 
$$
\xymatrix@C=35pt{
\cN^{\,\vee}_{Y|X}\otimes L_{|Y} \ar[r] \ar[d] & \cL_{|Y}\otimes L_{|Y} \ar[r]\ar[d]^{\simeq} & \cL_Y\otimes L_{|Y} \ar[d]\\
\cL_Y^\vee \ar[r] & \cL_{|Y}^\vee \ar[r] & \cN_{Y|X}
}
$$
In particular, we get the  isomorphism $\cN_{Y|X}\simeq \cL_Y \otimes L_{|Y}$. Twisting with $ L_{|Y}$ the second row of the above diagram we conclude.
\end{proof}

\subsection{Compatible torus actions on contact manifolds}\label{ssec:compatible}

We will now consider contact manifolds $(X,L)$ for which there exists an action of a torus $H$ such that the contact form $\sigma$ is equivariant; we will say that such an action is {\em compatible with the contact structure}. In particular we will have an $H$-action on $F$, and the isomorphism (\ref{eq:contactiso}) will be $H$-equivariant.

The largest compatible torus action on a contact manifold $(X,L)$ is the one in which $H$ is a maximal torus of the identity component $ \CAut(X)^\circ$ of the group of contact automorphisms of $X$. In the case in which this group is reductive (for instance if $X$ is the twistor space of a compact quaternion-K\"ahler manifold), we have a precise description of the polytope of sections of this action, as a direct application of Lemma \ref{lem:AutCAut}: 

\begin{corollary}\label{cor:poly_sections}
Under the assumptions of Lemma \ref{lem:AutCAut}, assume that $\CAut(X)^\circ$ is reductive and let $H\subset \CAut(X)^\circ$ be a maximal torus. Then $\Gamma(X,L,H)$ is equal to the polytope $\bigtriangleup(\CAut(X)^\circ)$ generated by the roots of $\CAut(X)^\circ$.  
\end{corollary}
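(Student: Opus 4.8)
The plan is to chase the identification $\HH^0(X,L)\simeq\Lie(\Aut(X))=\Lie(\CAut(X)^\circ)$ from Lemma \ref{lem:AutCAut} and recognize it as the adjoint representation, whose $H$-weights are precisely the roots together with zeros from the Cartan. First I would recall that by Lemma \ref{lem:AutCAut}, since $\Pic X=\ZZ L$ (so that $X\not\simeq\P(\Omega_{\P^{n+1}})$ and, via the exclusion of $\P^{2n+1}$ built into that lemma's hypotheses), we have an $\Aut(X)$-equivariant isomorphism $\HH^0(X,L)\simeq\Lie(\Aut(X))=\g$, where $\g$ is the Lie algebra of $G:=\CAut(X)^\circ$. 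This isomorphism is equivariant for the whole group $G$, hence in particular for the maximal torus $H\subset G$; the $H$-action on $\HH^0(X,L)$ induced by the linearization $\mu_L$ therefore agrees, as an $H$-representation, with the adjoint action of $H$ on $\g$.

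Next I would decompose $\g$ under $H$ using the Cartan decomposition: since $H$ is a maximal torus of the reductive group $G$, we have $\g=\h\oplus\bigoplus_{\alpha\in\Phi}\g_\alpha$, where $\h=\Lie(H)$ is the zero weight space and $\Phi$ is the root system of $G$ with respect to $H$. Thus the $H$-weights occurring in $\HH^0(X,L)$ are exactly the roots $\alpha\in\Phi$ (each with multiplicity one) together with the weight $0$ (with multiplicity $\dim H=\rank G$). The polytope of sections $\Gamma(X,L,H)$ is by definition the convex hull in $M\otimes\RR$ of these weights. Since the zero weight lies in the convex hull of $\Phi$ (the root system is symmetric about the origin, so $0=\tfrac12(\alpha+(-\alpha))$ for any root $\alpha$), adding the weight $0$ does not enlarge the convex hull, and we obtain
\[
\Gamma(X,L,H)=\conv\bigl(\Phi\cup\{0\}\bigr)=\conv(\Phi)=\bigtriangleup(\CAut(X)^\circ),
\]
which is exactly the root polytope $\bigtriangleup(\CAut(X)^\circ)$ generated by the roots of $G$.

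The one point requiring care, and the main obstacle, is matching the linearization $\mu_L$ used to define $\Gamma(X,L,H)$ with the honest adjoint $H$-action on $\g$: the equivariant isomorphism of Lemma \ref{lem:AutCAut} is canonical only up to the choice of linearization of the $H$-action on $L$, and a priori the weights of $\HH^0(X,L)$ could differ from the roots by an overall translation by a character of $H$. However, because $G$ is semisimple-by-construction here (the reductive group $\CAut(X)^\circ$ acts on $\g$ through its adjoint representation, whose weights are intrinsically the roots) the adjoint representation admits no nontrivial $G$-invariant character twist, so the correct linearization is forced and no translation appears; equivalently, one may fix $\mu_L$ to be the one compatible with the $G$-module structure coming from Lemma \ref{lem:AutCAut}. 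With that compatible choice the weight set is exactly $\Phi\cup\{0\}$ and the identification of the two polytopes follows immediately, completing the proof.
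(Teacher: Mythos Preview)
Your proof is correct and follows essentially the same route as the paper, which states the corollary as a ``direct application of Lemma~\ref{lem:AutCAut}'' without spelling out any details; you have simply filled in the Cartan decomposition and the convex-hull computation that the paper leaves implicit.

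One small comment on your final paragraph about the linearization: the argument that ``the adjoint representation admits no nontrivial $G$-invariant character twist'' is not quite the right justification, since at this stage $G$ is only assumed reductive (semisimplicity is established later, in Corollary~\ref{cor:LSisolated}), and a reductive group with positive-dimensional center does have nontrivial characters. The cleaner way to dispose of the ambiguity is already built into Lemma~\ref{lem:AutCAut}: that lemma produces a \emph{specific} $G$-linearization of $L$ (the one for which the contact form $\sigma$ is $G$-equivariant), and the isomorphism $\HH^0(X,L)\simeq\fg$ is $G$-equivariant for \emph{that} linearization and the adjoint action. So the weight set is $\Phi\cup\{0\}$ on the nose, with no translation to worry about. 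Your final sentence (``one may fix $\mu_L$ to be the one compatible with the $G$-module structure coming from Lemma~\ref{lem:AutCAut}'') already says this; the detour through character twists is unnecessary.
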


Let us now discuss some properties of the fixed point components of a compatible action of a complex torus $H$ on a contact manifold $(X,L)$, the weight map $\mu_L:X^H\to\Mo(H)$, and the compass at every component. The compatibility property tells us that, for every component $Y$, the weight $-\mu_L(Y)$ belongs to $\cC(Y,X,H)$. The rest of the elements of this compass are weights of the action of $H$ on $F_{|Y}$, which satisfy the following symmetry property (cf. \cite[Lemma 4.1]{BWW}), that follows from the fact that the contact isomorphism $F\simeq F^\vee\otimes L$ is $H$-equivariant. 

\begin{lemma}\label{rem:symweights}
Let $(X,L)$ be a contact manifold admitting a compatible action of a torus $H$, and let $Y\subset X^H$ be a fixed point component. For every weight $m$ of the action of $H$ on $F_{|Y}$ there exists another one $m'$ such that $m+m'=\mu_L(Y)$. 
\end{lemma}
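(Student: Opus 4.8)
The plan is to exploit the $H$-equivariance of the contact isomorphism (\ref{eq:contactiso}), $F\stackrel{\simeq}{\lra}F^\vee\otimes L$, after restricting everything to the fixed component $Y$. Since $Y$ is pointwise fixed by $H$, the restrictions $F_{|Y}$, $F^\vee_{|Y}$ and $L_{|Y}$ are $H$-equivariant bundles on which $H$ acts fiberwise, so each of them decomposes into a direct sum of eigen-subbundles indexed by the weights of the action. First I would record that, by the very definition of $\mu_L(Y)$, the torus $H$ acts on the line bundle $L_{|Y}$ through the single character $\mu_L(Y)$.

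Next I would decompose $F_{|Y}=\bigoplus_i F^{m_i}(Y)$ into eigen-subbundles, where $m_i$ runs over the weights of the $H$-action on $F_{|Y}$. Dualizing gives $F^\vee_{|Y}=\bigoplus_i (F^{m_i}(Y))^\vee$, on which $H$ acts with weights $-m_i$; tensoring with $L_{|Y}$ then shifts all these weights by $\mu_L(Y)$, so that $F^\vee_{|Y}\otimes L_{|Y}$ is the direct sum of eigen-subbundles of weights $\mu_L(Y)-m_i$. Because the isomorphism (\ref{eq:contactiso}) is $H$-equivariant, it identifies $F_{|Y}$ with $F^\vee_{|Y}\otimes L_{|Y}$ as $H$-modules in every fiber; hence the two multisets of weights, $\{m_i\}$ and $\{\mu_L(Y)-m_i\}$ (each counted with the rank of the corresponding eigen-subbundle), must coincide.

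It then follows that the involution $m\mapsto \mu_L(Y)-m$ permutes the weights of the action of $H$ on $F_{|Y}$. In particular, for every such weight $m$ the value $m'=\mu_L(Y)-m$ is again a weight of the action on $F_{|Y}$, and by construction $m+m'=\mu_L(Y)$, which is exactly the claim. I do not expect a genuine obstacle here: the single delicate point is the bookkeeping of the shift by $\mu_L(Y)$ coming from the tensor factor $L_{|Y}$, and matching up eigen-subbundles (not merely individual fibers), so that the symmetry is asserted at the level of weights counted with their multiplicities.
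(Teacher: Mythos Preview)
Your argument is correct and is precisely the approach indicated in the paper, which simply notes that the statement follows from the $H$-equivariance of the contact isomorphism $F\simeq F^\vee\otimes L$ (citing \cite[Lemma~4.1]{BWW}); you have merely spelled out the weight bookkeeping that this equivariance entails.
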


Considering the restriction of the action to an extremal fixed point component, we obtain the following result, that was stated in \cite[Corollary~4.3]{BWW}:

\begin{lemma}\label{lem:sinksourceiso} 
Let $(X,L)$ be a contact manifold supporting a nontrivial compatible action of a torus $H$. If $Y$ is  an $H$-fixed point component such that $\mu_L(Y)\neq 0$, then $Y\subset X$ is an isotropic subvariety and the restriction of the contact structure $F_{|Y}$ contains $T_{Y}\oplus (\Omega_{Y}\otimes L)$ as a direct summand. Moreover the weight $-\mu_L(Y)$ appears with multiplicity $\dim Y+1$ in the compass $\cC(Y,X,H)$.
\end{lemma}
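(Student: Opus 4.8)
The plan is to work entirely with the $H$-weight decomposition of the bundles appearing in the contact sequence (\ref{eq:contactstr}) restricted to $Y$, exploiting that $Y$ is fixed pointwise---so $H$ acts with weight $0$ on $T_Y$ and carries all the nonzero weights on $\cN_{Y|X}$. Write $\mu:=\mu_L(Y)\neq 0$. Compatibility makes the restricted sequence $0\to F_{|Y}\to (T_X)_{|Y}\stackrel{\sigma}{\to} L_{|Y}\to 0$ an $H$-equivariant sequence in which $L_{|Y}$ is homogeneous of weight $\mu$. First I would establish that $Y$ is isotropic: the subbundle $T_Y\subset (T_X)_{|Y}$ has weight $0$, and any $H$-equivariant morphism from a weight-$0$ bundle to a bundle of weight $\mu\neq 0$ must vanish; hence $\sigma|_{T_Y}=0$ and $T_Y\subset \ker\sigma=F_{|Y}$.

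Next I would decompose $F_{|Y}$ into its $H$-eigensubbundles $F_{|Y}=\bigoplus_m (F_{|Y})_m$, which is legitimate since $Y$ is fixed pointwise. Passing to weight-$0$ parts in the restricted contact sequence and using that $L_{|Y}$ has no weight-$0$ summand identifies $(F_{|Y})_0$ with the weight-$0$ subbundle of $(T_X)_{|Y}$, namely $T_Y$. Restricting the contact isomorphism (\ref{eq:contactiso}) gives an $H$-equivariant isomorphism $F_{|Y}\cong F_{|Y}^\vee\otimes L_{|Y}$; comparing weight-$m$ pieces yields $(F_{|Y})_m\cong \big((F_{|Y})_{\mu-m}\big)^\vee\otimes L_{|Y}$, the bundle-level refinement of the symmetry in Lemma \ref{rem:symweights}. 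Taking $m=\mu$ gives $(F_{|Y})_\mu\cong \big((F_{|Y})_0\big)^\vee\otimes L_{|Y}=\Omega_Y\otimes L_{|Y}$. Because $\mu\neq 0$ the weight-$0$ and weight-$\mu$ summands are distinct, so $T_Y\oplus(\Omega_Y\otimes L_{|Y})=(F_{|Y})_0\oplus (F_{|Y})_\mu$ is a direct summand of $F_{|Y}$, proving the second assertion.

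Finally, for the compass multiplicity I would take weight-$\mu$ parts in the restricted contact sequence. There $L_{|Y}$ contributes rank $1$ and $(F_{|Y})_\mu=\Omega_Y\otimes L_{|Y}$ contributes rank $\dim Y$, so the weight-$\mu$ subbundle of $(T_X)_{|Y}$ has rank $\dim Y+1$; since $\mu\neq 0$ this subbundle lies entirely in $\cN_{Y|X}$, i.e.\ $\cN^{\mu}(Y)$ has rank $\dim Y+1$. In the convention $\cN_{Y|X}=\bigoplus_i\cN^{-\nu_i(Y)}(Y)$ this says precisely that $\nu=-\mu=-\mu_L(Y)$ occurs with multiplicity $\dim Y+1$ in $\cC(Y,X,H)$. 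The argument is essentially formal weight-space bookkeeping; the only points needing care are keeping the compass sign convention straight and observing that, thanks to $\mu\neq 0$, the weight-$\mu$ eigensubbundle meets $T_Y$ trivially and is therefore counted in the normal bundle rather than in the tangent bundle.
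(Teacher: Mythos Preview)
Your proof is correct and follows essentially the same approach as the paper's. The only cosmetic difference is that the paper first invokes the weight-symmetry Lemma~\ref{rem:symweights} to produce a subbundle $F'\subset F_{|Y}$ of weight $\mu$ and rank $\dim Y$, and then identifies $F'\cong\Omega_Y\otimes L$ via the contact isomorphism, whereas you go directly to the bundle-level identity $(F_{|Y})_m\cong\big((F_{|Y})_{\mu-m}\big)^\vee\otimes L_{|Y}$ and set $m=\mu$; this is exactly the content behind that lemma, so the arguments are the same.
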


\begin{proof}
Denote by $d:=\mu_L(Y)\in\Mo(H)$ the weight of $L$ at every point of $Y$. Since $d\neq 0$ and $T_{Y}$ is at every point the eigenspace of ${T_X}_{|Y}$ associated to the weight zero, it follows that $T_Y$ is contained in the kernel of $\sigma_{|Y}:{T_X}_{|Y}\to L_{|Y}$, that is $F_{|Y}$. On the other hand, applying Lemma  \ref{rem:symweights} to the weights of $T_Y\subset F_{|Y}$, we conclude that $F_{|Y}$ contains an $H$-invariant vector subbundle $F'$ of rank equal to $\dim Y$, whose weights are all equal to $d$, so that $F'\oplus T_{Y}$ is a subbundle of ${T_X}_{|Y}$. The proof is finished observing that $F'$ is mapped isomorphically onto $\Omega_{Y}\otimes L$ via the composition of the isomorphism (\ref{eq:contactiso}) with the induced projection $F^\vee\otimes L\to \Omega_{Y}\otimes L$. For the second part note that, from the above arguments, the summand of weight $d$ in ${T_X}_{|Y}$ is an extension of $F'\simeq \Omega_{Y}\otimes L$ and $L_{|Y}$.
\end{proof}

We now focus on the case $H=\C^*$. The next statement shows that the central components of a compatible $\C^*$-action on $(X,L)$ inherit its contact structure:

\begin{lemma}\label{lem:legendrian2}
Let $(X,L)$ be a contact manifold supporting a nontrivial compatible $\C^*$-action. Then, for any irreducible component $Y_{0}$ such that $\mu_L(Y_0)=0$, the eigen-subbundle of weight zero $(F_{|Y_{0}})_{0}\subset F_{|Y_{0}}$ defines a contact form on $Y_{0}$. Moreover, the ranks $\nu^+(Y_0),\nu^-(Y_0)$ of the positive and negative parts of $\cN_{Y_0|X}$ are equal and, in particular, $\dim Y_{0}= \dim X-2\nu^+(Y_{0})$. 
\end{lemma}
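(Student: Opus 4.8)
The plan is to prove the statement in two parts, corresponding to the two assertions about $Y_0$. Throughout, I work with the compatible $\C^*$-action, identifying $\Mo(\C^*)\simeq\Z$, and I use that $\mu_L(Y_0)=0$.

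For the first assertion, I would restrict the contact exact sequence $(\ref{eq:contactstr})$ and the contact isomorphism $(\ref{eq:contactiso})$ to $Y_0$, and then decompose each bundle into $\C^*$-eigen-subbundles. Since all the morphisms in sight are $H$-equivariant, they respect the eigenspace gradings. The key observation is that because $\mu_L(Y_0)=0$, the line bundle $L_{|Y_0}$ sits in weight zero. I would first check that the weight-zero part of $F_{|Y_0}$, namely $(F_{|Y_0})_0$, is exactly the contact distribution of $Y_0$: indeed, $T_{Y_0}$ is the weight-zero part of $(T_X)_{|Y_0}$, so taking weight-zero parts of the restricted sequence $(\ref{eq:contactstr})$ gives an exact sequence $0\to (F_{|Y_0})_0\to T_{Y_0}\to L_{|Y_0}\to 0$ (using that $L_{|Y_0}$ has weight zero and that the weight-zero part of $T_X|_{Y_0}$ is precisely $T_{Y_0}$). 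Then I would take the weight-zero part of the contact isomorphism $(\ref{eq:contactiso})$. Since twisting by $L$ shifts weights by $\mu_L(Y_0)=0$, the weight-zero piece of $F^\vee\otimes L$ is $((F_{|Y_0})_0)^\vee\otimes L_{|Y_0}$, and equivariance forces the restriction of the O'Neill isomorphism to induce a skew-symmetric isomorphism $(F_{|Y_0})_0\xrightarrow{\simeq}((F_{|Y_0})_0)^\vee\otimes L_{|Y_0}$. This produces a contact structure on $Y_0$ with contact line bundle $L_{|Y_0}$.

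For the second assertion, I would use the symmetry of weights on $F_{|Y_0}$ provided by Lemma \ref{rem:symweights}: since $\mu_L(Y_0)=0$, every weight $m$ of $F_{|Y_0}$ is paired with a weight $m'=-m$. Hence the nonzero weights of $F_{|Y_0}$ come in pairs of opposite sign, so the positive and negative parts of $F_{|Y_0}$ have equal rank. Now I would relate this to the normal bundle. The compass of the action at $Y_0$ consists of the weight $-\mu_L(Y_0)=0$ together with the remaining weights coming from $F_{|Y_0}$ (as discussed before Lemma \ref{rem:symweights}); but since $-\mu_L(Y_0)=0$ is a trivial weight, it does not contribute to the (nonzero) compass. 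The point is that $\cN_{Y_0|X}$ is precisely the sum of the \emph{nonzero}-weight eigen-subbundles, and these are exactly the nonzero-weight parts of $F_{|Y_0}$ together with the nonzero-weight part of $L_{|Y_0}$; but $L_{|Y_0}$ has weight zero and contributes nothing. Therefore the nonzero weights of $\cN_{Y_0|X}$ coincide with the nonzero weights of $F_{|Y_0}$, which are balanced in sign. This gives $\nu^+(Y_0)=\nu^-(Y_0)$, and since $\operatorname{rank}\cN_{Y_0|X}=\nu^+(Y_0)+\nu^-(Y_0)=\dim X-\dim Y_0$, we conclude $\dim Y_0=\dim X-2\nu^+(Y_0)$.

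The main obstacle I anticipate is bookkeeping the weight-zero and nonzero parts consistently, in particular making sure the contribution of the trivial weight $-\mu_L(Y_0)=0$ to the tangent/normal splitting is handled correctly. Because $\mu_L(Y_0)=0$, the weight of $L$ at $Y_0$ is trivial, so the ``extra'' compass element $-\mu_L(Y_0)$ from the general contact picture collapses to $0$ and must be excluded from $\cN_{Y_0|X}$; this is precisely why the sink/source case (where $\mu_L(Y)\neq 0$, treated in Lemma \ref{lem:sinksourceiso}) behaves differently. Once this is tracked carefully, everything reduces to the weight symmetry of Lemma \ref{rem:symweights} applied at weight zero, plus equivariant decomposition of the contact sequence.
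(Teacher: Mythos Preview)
Your proposal is correct and follows essentially the same approach as the paper: both arguments restrict the contact sequence and the contact isomorphism to $Y_0$, take weight-zero parts to obtain the induced contact structure, and use the symmetry of nonzero weights (you via Lemma~\ref{rem:symweights}, the paper directly via the contact isomorphism sending $\cN^-(Y_0)$ to $\cN^+(Y_0)^\vee\otimes L_{|Y_0}$) to conclude $\nu^+(Y_0)=\nu^-(Y_0)$. The only cosmetic difference is that the paper phrases the weight-zero part of $F_{|Y_0}$ as $F_{|Y_0}\cap T_{Y_0}$ and checks surjectivity of $T_{Y_0}\to L_{|Y_0}$ explicitly, whereas you obtain the same exact sequence by noting that taking weight-zero pieces is an exact functor.
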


\begin{proof}
Since $T_{Y_0}$ is the part of weight zero of $T_{X{|Y_0}}$, we have a weight decomposition (see Equation (\ref{eq:normal+-}) in Section \ref{ssec:actionC*}):
$$T_{X{|Y_0}}=\cN^-(Y_0)\oplus T_{Y_0}\oplus \cN^+(Y_0),$$
and the induced map $\sigma_{|T_{Y_0}}:T_{Y_0}\to L_{|Y_0}$ is surjective. Thus we have a decomposition of the contact distribution along $Y_0$: 
$$F_{|Y_0} \simeq  \cN^-(Y_0)  \oplus (F_{|Y_0} \cap T_{Y_0}) \oplus \cN^+(Y_0).$$
In particular, the contact isomorphism $F_{|Y_0} \simeq F^\vee_{|Y_0} \otimes L_{|Y_0}$ restricts to an isomorphism   
$F_{|Y_0} \cap T_{Y_0} \simeq (F_{|Y_0} \cap T_{Y_0})^\vee \otimes L_{|Y_0}$; this shows that  $(F_{|Y_0})_{0}=F_{|Y_0} \cap T_{Y_0}$ is a contact distribution on $Y_0$. The equality $\nu^+(Y_0)=\nu^-(Y_0)$ follows from the fact that the contact isomorphism sends $\cN^-(Y_0)$ isomorphically to $\cN^+(Y_0)^\vee \otimes L_{|Y_0}$.
\end{proof}

Furthermore, Equation (\ref{eq:contact}) immediately provides the following:

\begin{corollary}\label{cor:Y0contact}
Let $(X,L)$ be a Fano contact manifold supporting a nontrivial $\C^*$-action compatible with its contact structure. If $Y_0$ is an irreducible component such that $\mu_L(Y_0)=0$, then $(Y_0,L_{|Y_0})$ is a Fano contact manifold.  
\end{corollary}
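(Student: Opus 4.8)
The goal is to prove Corollary \ref{cor:Y0contact}: if $(X,L)$ is a Fano contact manifold with a nontrivial compatible $\C^*$-action, and $Y_0$ is a fixed-point component of weight zero, then $(Y_0, L_{|Y_0})$ is itself a Fano contact manifold. The plan is to combine the contact structure on $Y_0$ already produced in Lemma \ref{lem:legendrian2} with the numerical characterization of the Fano condition for contact manifolds recorded in Equation (\ref{eq:contact}).

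First I would invoke Lemma \ref{lem:legendrian2} directly: it provides the exact sequence giving $(Y_0, L_{|Y_0})$ a contact structure, namely the weight-zero eigen-subbundle $(F_{|Y_0})_0 = F_{|Y_0}\cap T_{Y_0}$ together with the surjection $\sigma_{|T_{Y_0}}:T_{Y_0}\to L_{|Y_0}$ and the induced contact isomorphism $(F_{|Y_0})_0 \simeq (F_{|Y_0})_0^\vee\otimes L_{|Y_0}$. Thus the only thing that remains to verify is that this contact manifold is \emph{Fano}, which by Remark \ref{rem:contactfano} is equivalent to $L_{|Y_0}$ being ample.

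The key step is the numerical one. Write $\dim Y_0 = 2m+1$, so that the contact structure on $Y_0$ forces the canonical bundle to satisfy $\omega_{Y_0} = (L_{|Y_0})^{-(m+1)}$ by the contact identity (\ref{eq:contact}) applied on $Y_0$. On the other hand, $L_{|Y_0}$ is the restriction of the ample line bundle $L$ to the smooth closed subvariety $Y_0\subset X$, hence $L_{|Y_0}$ is ample. Combining these two facts, $\omega_{Y_0}^{-1} = (L_{|Y_0})^{m+1}$ is ample, so $Y_0$ is a Fano manifold carrying a contact structure with contact line bundle $L_{|Y_0}$; that is exactly the assertion. Here I use Lemma \ref{lem:legendrian2} to guarantee that $\dim Y_0 = \dim X - 2\nu^+(Y_0)$ is odd (since $\dim X = 2n+1$), so that the contact dimension count is consistent and the exponent $m+1$ is a genuine positive integer.

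I expect no serious obstacle: the substantive geometric content — that $Y_0$ inherits a genuine contact form — is entirely contained in Lemma \ref{lem:legendrian2}, and the passage to the Fano condition is a one-line application of (\ref{eq:contact}) together with the ampleness of the restriction of an ample bundle. The only point requiring minor care is to confirm that the contact identity (\ref{eq:contact}) is being applied with the correct half-dimension of $Y_0$ rather than of $X$; this is handled by the dimension formula at the end of Lemma \ref{lem:legendrian2}. Accordingly the proof is essentially immediate, which is why the statement is phrased as a corollary.
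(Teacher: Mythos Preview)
Your proposal is correct and follows exactly the argument the paper intends: the contact structure on $Y_0$ comes from Lemma~\ref{lem:legendrian2}, and the Fano property then follows from Equation~(\ref{eq:contact}) applied on $Y_0$ together with the ampleness of $L_{|Y_0}$ as the restriction of an ample bundle. The paper in fact presents the corollary with no proof beyond the sentence ``Equation~(\ref{eq:contact}) immediately provides the following,'' so you have simply spelled out the implicit steps.
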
 

We finish this Section with two statements on contact varieties admitting a nontrivial compatible $\C^*$-action of bandwidth $2$, that will be used in Section \ref{contact_iso}. 

\begin{lemma}\label{lem:weightsBW2}
Let $(X,L)$ be a contact manifold supporting a nontrivial compatible $\C^*$-action of bandwidth two. Then the weights on the sink and the source of the action of $\C^*$ on $L$ are equal respectively to $-1$ and $1$. 
\end{lemma}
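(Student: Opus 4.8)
The plan is to pin down the two integers $\mu_{\min}$ and $\mu_{\max}$ directly, using that for a $\C^*$-action we have fixed an isomorphism $\Mo(\C^*)\simeq\Z$, so all weights are integers and bandwidth two reads $\mu_{\max}-\mu_{\min}=2$. Writing $Y_-$ and $Y_+$ for the sink and the source, it suffices to prove the two inequalities $\mu_{\min}\le -1$ and $\mu_{\max}\ge 1$: together with $\mu_{\max}-\mu_{\min}=2$ they force $\mu_{\min}=-1$, $\mu_{\max}=1$.

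First I would record the sign of the compass at the extremal components. For any $\nu\in\cC(Y_-,X,\C^*)$, Lemma \ref{lem:compass_prop2} produces a fixed component $Y'$ and $\lambda\in\Q_{>0}$ with $\mu_L(Y')=\mu_{\min}+\lambda\nu$; minimality of $\mu_{\min}$ gives $\lambda\nu\ge 0$, and since compass weights are nonzero we get $\nu>0$. Hence the normal bundle $\cN_{Y_-|X}$ carries only negative $\C^*$-weights, i.e.\ $\nu^+(Y_-)=0$. The symmetric argument at the source shows every element of $\cC(Y_+,X,\C^*)$ is negative, whence $\nu^-(Y_+)=0$.

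Next I would rule out that either extremal weight is $0$. If $\mu_{\min}=0$, then $Y_-$ is a weight-zero component, so Lemma \ref{lem:legendrian2} applies and yields $\nu^+(Y_-)=\nu^-(Y_-)$; combined with $\nu^+(Y_-)=0$ this forces $\cN_{Y_-|X}=0$, hence $Y_-=X$ and the action is trivial, contrary to hypothesis. Thus $\mu_{\min}\ne 0$, and likewise $\mu_{\max}\ne 0$. Now that both extremal weights are nonzero, the compatibility of the action lets me conclude: the weight $\mu_{\min}$ of $L_{|Y_-}$ must be realised in $\cN_{Y_-|X}$ (the contact form $\sigma$ is an equivariant surjection onto $L$, and $T_{Y_-}$ sits in the weight-zero part), so $-\mu_{\min}\in\cC(Y_-,X,\C^*)$; by the first step this is positive, giving $\mu_{\min}\le -1$. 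Dually $-\mu_{\max}\in\cC(Y_+,X,\C^*)$ is negative, so $\mu_{\max}\ge 1$, and the proof closes.

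I do not expect a serious obstacle; the only point that needs care is the weight-zero exclusion, where one must not apply the compatibility property ``$-\mu_L(Y)\in\cC(Y,X,H)$'' blindly, since it holds only for components of nonzero weight (the weight $\mu_L(Y)$ may be carried by the tangent directions rather than the normal ones). Handling this cleanly is exactly what the combination of the compass-sign computation with Lemma \ref{lem:legendrian2} achieves, and it is the one place where the contact hypothesis, through the equality $\nu^+=\nu^-$ on central components, is genuinely used.
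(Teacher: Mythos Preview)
Your argument is correct, but it takes a somewhat longer path than the paper's. The paper bypasses the weight-zero case entirely: since at $Y_-$ all weights of $T_X|_{Y_-}$ are nonpositive and the action is nontrivial (so the normal bundle is nonzero, contributing at least one strictly negative weight), the sum of these weights is strictly negative; but by the contact relation $\det T_X\cong L^{n+1}$ (equation~(\ref{eq:contact})) this sum equals $(n+1)\mu_{\min}$, forcing $\mu_{\min}<0$. Symmetrically $\mu_{\max}>0$, and bandwidth two finishes. Your route instead uses the equivariant surjection $T_X\to L$ to locate $-\mu_{\min}$ in the compass, which obliges you to first exclude $\mu_{\min}=0$ via Lemma~\ref{lem:legendrian2}. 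That detour is sound (and Lemma~\ref{lem:legendrian2} is logically independent of the present statement), but it is heavier machinery than needed: the determinant identity gives the sign in one line without any case analysis. What your approach does buy is that it never invokes the numerical relation $\omega_X=L^{-(n+1)}$, relying only on the surjectivity of $\sigma$ and the skew isomorphism on $F$; this could be useful in contexts where one wants to avoid index computations.
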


\begin{proof}
Let us denote by $Y_{-}$ the sink of the $\C^*$-action on $L$, and by $Y_{+}$ the source.
Denote by $d_\pm$  the weights of  $L$ on $Y_\pm$.
At every point of $Y_{-}$, the weights of the action on $(T_{X})_{|Y_{-}}$ are all non positive, therefore, by (\ref{eq:contact}), $d_-<0$. In a similar way we can prove that $d_+ >0$. By the assumption on the bandwidth we have $d_+=d_-+2$, and the result follows.  
\end{proof}

 Combining  Lemma \ref{lem:sinksourceiso}  and Lemma \ref{lem:weightsBW2}, we get the following: 

\begin{corollary}\label{cor:weightsBW2}
Let $(X,L)$ be a contact manifold of dimension $2n+1$, supporting a nontrivial compatible $\C^*$-action  of bandwidth two. Then the extremal fixed components $Y_\pm$ are Legendrian subvarieties  of $X$, $F_{|Y_{\pm}}$ is isomorphic to $T_{Y_{\pm}}\oplus (\Omega_{Y_{\pm}}\otimes L)$ and, in particular, $\dim Y_{\pm}=n$. 
\end{corollary}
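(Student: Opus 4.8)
The plan is to derive Corollary \ref{cor:weightsBW2} by feeding the numerical information from Lemma \ref{lem:weightsBW2} into the structural result of Lemma \ref{lem:sinksourceiso}. First I would recall that, by Lemma \ref{lem:weightsBW2}, the bandwidth-two assumption forces the weights of $L$ on the sink $Y_-$ and source $Y_+$ to be exactly $\mu_L(Y_-)=-1$ and $\mu_L(Y_+)=1$. In particular both weights are nonzero, so Lemma \ref{lem:sinksourceiso} applies verbatim to each extremal component $Y_\pm$: each $Y_\pm$ is an isotropic subvariety of $X$, the restricted contact distribution $F_{|Y_\pm}$ contains $T_{Y_\pm}\oplus(\Omega_{Y_\pm}\otimes L)$ as a direct summand, and the weight $-\mu_L(Y_\pm)$ appears in the compass $\cC(Y_\pm,X,H)$ with multiplicity $\dim Y_\pm+1$.

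The next step is to pin down the dimension of $Y_\pm$, which is where the bandwidth hypothesis is essential. For the sink $Y_-$, the whole normal bundle $\cN_{Y_-|X}$ consists of eigenspaces with strictly positive $\C^*$-weights (it is the source end of each compass vector, pointing away from the minimum), and by Lemma \ref{lem:compass_prop2} every such compass weight $\nu$ produces another fixed component whose $L$-weight is $\mu_L(Y_-)+\lambda\nu=-1+\lambda\nu$ for some $\lambda\in\Q_{>0}$. Since the action has bandwidth two, all $L$-weights lie in $\{-1,0,1\}$, so these target weights cannot exceed $1$; combined with the fact (Lemma \ref{lem:sinksourceiso}) that the compass contains $\dim Y_-+1$ copies of the weight $-\mu_L(Y_-)=1$, I would count that the normal directions accounted for by this distinguished summand, together with the tangent directions of $Y_-$, exhaust $T_X$ along $Y_-$. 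Concretely: $T_{X|Y_-}$ splits as $T_{Y_-}\oplus\cN_{Y_-|X}$, and the summand $T_{Y_-}\oplus(\Omega_{Y_-}\otimes L)\oplus L_{|Y_-}$ supplied by isotropy and the multiplicity count already has rank $\dim Y_-+\dim Y_-+1=2\dim Y_-+1$. Forcing this to equal $\dim X=2n+1$ gives $\dim Y_-=n$, and symmetrically $\dim Y_+=n$. An isotropic subvariety of dimension exactly $n$ is Legendrian by definition, so $T_{Y_\pm}\subset F_{|Y_\pm}$ with $\dim Y_\pm=n$.

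Finally, once I know $Y_\pm$ is Legendrian, I would refine the direct-summand statement into the stated isomorphism $F_{|Y_\pm}\simeq T_{Y_\pm}\oplus(\Omega_{Y_\pm}\otimes L)$. By Lemma \ref{lem:sinksourceiso} the bundle $F_{|Y_\pm}$ already contains $T_{Y_\pm}\oplus(\Omega_{Y_\pm}\otimes L)$ as a summand, and this summand has rank $2\dim Y_\pm=2n=\rank F_{|Y_\pm}$ (recalling $\rank F=2n$ from the contact sequence (\ref{eq:contactstr})). A subbundle of full rank that is a direct summand is the whole bundle, yielding the claimed isomorphism.

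The main obstacle I anticipate is the dimension count in the second paragraph: one must argue carefully that the bandwidth-two constraint, via the compass-translation Lemma \ref{lem:compass_prop2}, prevents any normal directions beyond the $\dim Y_\pm+1$ forced copies of weight $-\mu_L(Y_\pm)$, so that no extra eigen-subbundle survives and the dimension is exactly $n$ rather than merely at most $n$. Equivalently, one is using that with only weights $\{-1,0,1\}$ available, every normal direction at the sink must land in the unique allowed target stratum, which is precisely the isotropic/Legendrian summand identified by Lemma \ref{lem:sinksourceiso}; making this exhaustion rigorous is the crux, after which the rank comparison closing the isomorphism is routine.
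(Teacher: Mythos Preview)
Your overall plan---invoke Lemma~\ref{lem:weightsBW2} to get $\mu_L(Y_\pm)=\pm 1$, then feed this into Lemma~\ref{lem:sinksourceiso}---matches the paper, and your final rank comparison showing $F_{|Y_\pm}\simeq T_{Y_\pm}\oplus(\Omega_{Y_\pm}\otimes L)$ once $\dim Y_\pm=n$ is correct. The gap is exactly where you flag it: the dimension count.

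Your attempt to close it via Lemma~\ref{lem:compass_prop2} does not work. That lemma only says that for each compass element $\nu$ at $Y_-$ there exists some $\lambda\in\Q_{>0}$ with $-1+\lambda\nu\in\{-1,0,1\}$, hence $\lambda\nu\in\{1,2\}$. Since $\lambda$ is an unspecified positive rational, this places no restriction on $\nu$ itself: a compass weight $\nu=5$ is compatible with $\lambda=2/5$. So you cannot conclude that every compass element equals $+1$, and the rank-$(2\dim Y_-+1)$ summand you assemble is, from your argument, merely a subbundle of $T_{X|Y_-}$; nothing forces it to be all of $T_{X|Y_-}$, and you only recover $\dim Y_-\leq n$, which you already had from isotropy.

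The paper closes the gap with Lemma~\ref{rem:symweights} rather than Lemma~\ref{lem:compass_prop2}. Since $Y_-$ is the sink, every weight of $T_{X|Y_-}$ (hence of $F_{|Y_-}$) is nonpositive; by Lemma~\ref{rem:symweights} the weights on $F_{|Y_-}$ come in pairs $m,m'$ with $m+m'=\mu_L(Y_-)=-1$, which together with $m,m'\leq 0$ forces $\{m,m'\}=\{0,-1\}$. As the pairing is between subbundles of equal rank and $\rank F=2n$, each of $0,-1$ occurs with multiplicity exactly $n$. The weight-$0$ part of $F_{|Y_-}$ equals the weight-$0$ part of $T_{X|Y_-}$ (because $L$ has nonzero weight there), i.e.\ $T_{Y_-}$; hence $\dim Y_-=n$, and the weight-$(-1)$ part is identified with $\Omega_{Y_-}\otimes L$ as in Lemma~\ref{lem:sinksourceiso}. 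Replacing your second paragraph with this symmetry argument makes the proof complete.
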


\begin{proof}
Let us deal with $Y_-$, being the case of $Y_{+}$ completely analogous. Since the weights of the action on $F_{|Y_-}$ are all non positive and, by Lemma \ref{rem:symweights}, symmetric with respect to $-1/2$, it follows that the only weights are $0$ and $-1$, appearing both with multiplicity $n$. We then conclude arguing as in Lemma \ref{lem:sinksourceiso}. 
\end{proof}

%%%%%%%%%%%%%%%%%%%%%%%

%%%%%%%%%%%%%%%%%%%%%%%

\section{Actions on contact manifolds with isolated extremal fixed points}
\label{contact_iso}

Throughout this section, given a contact manifold $(X,L)$, we will denote by $\CAut(X)^\circ$ the identity component of the group of contact 
automorphisms of $X$. For a simple algebraic group $G$  we denote by $X_G$ the closed orbit in the
adjoint representation of $G$ and by $L_G$ the pull-back of $\cO(1)$ via the
embedding $X_G\hookrightarrow\PP(\g)$ where $\g$ is the Lie algebra of $G$. The goal of this section is to prove the following:

\begin{theorem} \label{thm:LB_isolated_pts}
Let $(X,L)$ be a contact Fano manifold with $\dim{X}=2n+1$, and $\Pic X=\ZZ L$. Suppose that $G=\CAut(X)^\circ$ is reductive of rank $r\geq 2$, and that the action of the maximal torus $H\subset G$ on $X$ has isolated points as extremal fixed point components. Then $G$ is simple of one of the following types: 
$$
\mbox{$\DB_r$ $(r\geq 3)$,  $\DD_r$ $(r\geq 4)$,  $\DE_r$ $(r=6,7,8)$,
$\DF_4$, $\DG_2$,}
$$ and $(X,L)=(X_G,L_G)$.
\end{theorem}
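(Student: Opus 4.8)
The plan is to determine the group $G$ first, and then identify $(X,L)$ with the adjoint variety $(X_G,L_G)$ by matching the complete $H$-fixed point data and invoking Proposition~\ref{prop:BWW224}$(C_2)$. The first step is to pin down $G$. By Lemma~\ref{lem:AutCAut} we have a $G$-equivariant isomorphism $\HH^0(X,L)\simeq\Lie(G)$, so that $\HH^0(X,L)$ is the adjoint representation, and by Corollary~\ref{cor:poly_sections} the polytope of sections $\Gamma(X,L,H)$ equals the root polytope $\bigtriangleup(G)$. Since $L$ is ample, $X$ embeds $G$-equivariantly in $\PP(\HH^0(X,L)^\vee)\simeq\PP(\g)$, with $G$-invariant image containing the unique closed orbit. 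I would show that $G$ is \emph{simple}: a splitting $\g=\g_1\oplus\g_2$ would produce two independent extremal weight directions, and the contact weight symmetry of Lemma~\ref{rem:symweights}, together with the multiplicity count at isolated extremal fixed points (Lemma~\ref{lem:sinksourceiso} gives multiplicity $\dim Y+1=1$ for $-\mu_L(Y)$ in the compass), is incompatible with a reducible root polytope. Once $G$ is simple, Remark~\ref{rem:contactfano} and the hypothesis $\Pic X=\ZZ L$ exclude $X\simeq\PP(\Omega_{\PP^{n+1}})$ and $X\simeq\PP^{2n+1}$ (where $L=\cO(2)$ fails to generate), hence rule out types $\DA$ and $\DC$; so the type of $G$ lies in the list of Lemma~\ref{lem:reduction}.

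Next I would fix long roots $\alpha,\alpha'$ at angle $2\pi/3$ and the resulting $S_2\simeq\SL(3)$, with tori $H_2$ and $H_1\simeq\C^*$ and weight projections $\imath^*,\pi^*$ as in Notation~\ref{notn:downgradings}. The aim is to describe every component $Y\subset X^H$ and match it, through a bijection $\psi$, with the corresponding component of $X_G$ read off from Corollary~\ref{cor:adjointfix} and Table~\ref{tab:Freudenthal}. I would argue by induction on the rank. The central component $Y_0=X\cap\PP(\g^2_0)$ (resp.\ $Z_0=X\cap\PP(\g^1_0)$) is, by Corollary~\ref{cor:Y0contact}, again a contact Fano manifold carrying a compatible $H/H_2$- (resp.\ $H/H_1$-) action of rank $\geq2$, so the inductive hypothesis recognizes it. The components at the vertices of the hexagon $\bigtriangleup(S_2)$ and of $\bigtriangleup(S_1)=[-2,2]$ are the isolated extremal fixed points and match the long-root vertices. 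The inner non-central components are pinned down by the small-bandwidth classification applied to the complementary $\C^*$-actions they carry: for the relevant types the inner component $Z_m\subset X$ supports an equalized bandwidth-three $\C^*$-action with isolated sink and source, whence Theorem~\ref{thm:bw3} identifies it with one of the Freudenthal varieties $\DC_3(3),\DA_5(3),\DD_6(6),\DE_7(7)$, while Theorem~\ref{thm:uniqueBW2} resolves the bandwidth-two situations; Corollary~\ref{cor:weightsBW2} guarantees that the extremal components of these actions are Legendrian with $F_{|Y}\simeq T_Y\oplus(\Omega_Y\otimes L)$. Lemma~\ref{lem:compass_prop} transports compasses through the downgradings, so the local data computed for $X$ can be compared with those of $X_G$ furnished by Lemma~\ref{lem:compassGP}.

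With the bijection $\psi:\cY\to\cY'$ in hand I would verify hypotheses $(A_1)$--$(A_3)$ of Proposition~\ref{prop:BWW224}. Conditions $(A_1)$ and $(A_2)$ come from the previous step; for $(A_3)$ the isotropic and Legendrian components have their normal bundles determined by $L_{|Y}$ via the Atiyah extension of Proposition~\ref{prop:Atiyah}, so $(A_1)$ and $(A_2)$ already force the eigen-subbundles of $\cN_{Y|X}$ to match, while Lemma~\ref{rem:symweights} and Lemma~\ref{lem:sinksourceiso} control the weights $\nu$. Since both $H$-actions are restrictions of the $G$-action with $H$ a maximal torus and $X_G$ is rational homogeneous, Proposition~\ref{prop:BWW224}$(C_2)$ then yields $(X,L)\simeq(X_G,L_G)$.

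The hard part will be the recognition step: identifying the a priori unknown positive-dimensional inner fixed components of $X$ with the specific homogeneous Freudenthal varieties of $X_G$, and matching their compasses and normal bundles for $(A_3)$. This is precisely where the bandwidth-two and bandwidth-three classifications (Theorems~\ref{thm:uniqueBW2} and~\ref{thm:bw3}) must be applied to the downgraded $\C^*$-actions, and where the induction on rank through the central contact component $Y_0$ must be set up so that the standing hypotheses — reductivity of the downgraded automorphism group of rank $\geq2$ and isolatedness of the extremal fixed points — are preserved at each stage, with the base cases absorbed by the explicit Freudenthal list.
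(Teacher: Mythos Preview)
There are two substantive gaps.

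First, your exclusion of types $\DA$ and $\DC$ is a non sequitur. The hypothesis $\Pic X=\ZZ L$ is already part of the statement, so observing that it rules out $X\simeq\PP^{2n+1}$ and $X\simeq\PP(\Omega_{\PP^{n+1}})$ adds nothing; it certainly does not prevent $G=\CAut(X)^\circ$ from being of type $\DA$ or $\DC$. Excluding those types requires a genuine argument about the $H$-action (the paper imports it from \cite[Proposition~5.9, Lemma~5.10]{BWW} once $\bigtriangleup(X,L,H)=\Gamma(X,L,H)=\bigtriangleup(G)$ is established).

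Second, and more seriously, the induction on rank through the central component $Y_0$ does not go through. Corollary~\ref{cor:Y0contact} does make each component of $Y_0$ contact Fano, but the standing hypothesis $\Pic=\ZZ L$ need not survive: in types $\DE_6$ and $\DE_7$ the central components are $\DA_2(1,2)$ and $\DA_5(1,5)$ (Table~\ref{tab:Freudenthal}), both of Picard number two, so the inductive hypothesis is simply unavailable there. Nor is it clear that $H/H_2$ is a \emph{maximal} torus of $\CAut(Y_0)^\circ$, that this group is reductive of rank $\geq 2$ (when $r=2$ the quotient $H/H_2$ is trivial), or that its extremal fixed points are isolated. The paper avoids induction altogether: it identifies the $H_2$-fixed components $Y_{\beta_i}$ and the $H_1^i$-fixed components $Z^i_1$, $Z^i_{0,k}$ directly via the bandwidth-three and bandwidth-two classifications (Propositions~\ref{prop:Xmu} and~\ref{prop:Z}), reads off $Y_0$ as the inner locus of the bandwidth-two varieties $Z^i_{0,k}$, matches all $H_2$-data with those of some $X_{G'}$ (Corollary~\ref{cor:H2griddata}), proves $G'\simeq G$ by a dimension and $\Mo(H_2)$-grading comparison (Lemma~\ref{lem:mistaken identity}, Corollary~\ref{cor:XGXG'}), and only then lifts from $H_2$ to $H$ via an Euler-characteristic count (Lemma~\ref{lem:grid1}) showing that $X^H$ consists precisely of isolated points indexed by the long roots. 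Your outline does not contain this last step, which is where one concludes that \emph{all} $H$-fixed points are isolated, not just the extremal ones, before invoking Proposition~\ref{prop:BWW224}$(C_2)$.
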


\begin{remark}\label{rem:red_dim}
Note that the statement is known for $n\leq 4$ without any assumptions on the rank of $G$ and on $X^H$ (see \cite[Section~1.1]{BWW} and the references therein). In the proof of the Theorem we will then assume that $n\geq 5$. Note also that for the cases $n=5,6$, the statement has been proved in \cite[Theorem 6.2]{RW} without the assumption on the extremal fixed points. 
\end{remark}

\textbf{Outline and methods of the proof of Theorem \ref{thm:LB_isolated_pts}.}
We achieve the proof of Theorem  \ref{thm:LB_isolated_pts} in five steps. In Step I we prove the simplicity of $G$. To this end, in Lemma \ref{lem:polyroot} we first observe that the polytope of sections and of fixed points coincide and that they are also equal to $\bigtriangleup(G)$. Then in Corollary \ref{cor:LSisolated} we deduce the simplicity of $G$, so that we know all the possibilities for $\bigtriangleup(X,L,H)$, and we eliminate the cases in which $G$ is of type $\DC$ or of type $\DA_r$ with $r \ge 3$ (see Lemma \ref{lem:noCtype}). Step II and III contain the key point which allows us to extend the previous results of \cite{BWW, RW} to any dimension of the contact variety. In these steps we will make use of classification results of bandwith three and two varieties which have been recently obtained in \cite{ORSW}. These varieties arise as submanifolds of $X$ (see Propositions \ref{prop:Xmu} and \ref{prop:Z}) and play a central role to determine the combinatorial data which will be needed. In Step II we use Lemma \ref{lem:reduction} to reduce the action of $G$ on $(X,L)$ to the action of a subgroup $S_2\subset G$ isomorphic to $\SL(3)$. Then we focus on the induced action of a rank two torus $H_2\subset S_2$ on $(X,L)$, and in Step III we describe all the fixed point components and compasses with respect to the action of this smaller torus. As a consequence, in Step IV, we prove that such combinatorial data coincide with the fixed point components and compasses of the action of $H_2$ on the corresponding adjoint variety $(X_G, L_G)$ (cf. Corollaries \ref{cor:H2griddata} and \ref{cor:XGXG'}). All the information collected in these steps about the $H_2$-action allows us to conclude in Step V that the fixed point components and the compasses of the action of the maximal torus $H$ on $(X,L)$ and on $(X_G,L_G)$ are the same (see Lemmas \ref{lem:grid1} and \ref{lem:grid2}). Applying Proposition \ref{prop:BWW224} we obtain that $(X,L)\simeq (X_G,L_G)$ as stated. 

\subsection{Step I: Simplicity and type of the automorphism group}

The following statement, whose hypotheses are obviously fulfilled under the assumptions of Theorem \ref{thm:LB_isolated_pts}, provides the equality of the polytopes of fixed points and of sections of the action of the maximal torus $H\subset G=\CAut(X)^\circ$ on $(X,L)$. 

\begin{lemma}\label{lem:polyroot}
Let $X$ be a contact Fano manifold with $\dim{X}=2n+1$ and $\Pic X=\ZZ L$, and $H$ be a maximal torus of $G=\CAut(X)^\circ$. Assume that for every extremal fixed point component $Y\subset X^H$ we have $\HH^0(Y,L_{|Y})\neq 0$. Then $$\bigtriangleup(X,L,H) = \Gamma(X,L,H) =\bigtriangleup(G).$$  
\end{lemma}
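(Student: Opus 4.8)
The plan is to break the chain of equalities into the identification $\Gamma(X,L,H)=\bigtriangleup(G)$ and the two inclusions between $\Gamma(X,L,H)$ and $\bigtriangleup(X,L,H)$, the hypothesis on the extremal components being needed only for one of them. The identification $\Gamma(X,L,H)=\bigtriangleup(G)$ is immediate: since $\Pic X=\ZZ L$, Remark \ref{rem:contactfano} excludes $X\simeq\P^{2n+1}$ and $X\simeq\P(\Omega_{\P^{n+1}})$, so Lemma \ref{lem:AutCAut} gives an $H$-equivariant isomorphism $\HH^0(X,L)\simeq\g=\Lie(G)$; the $H$-weights on $\g$ are the roots of $G$ together with $0$, whose convex hull is $\bigtriangleup(G)$. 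This is precisely Corollary \ref{cor:poly_sections}.

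The inclusion $\Gamma(X,L,H)\subseteq\bigtriangleup(X,L,H)$ holds in general, and I would obtain it by testing against one-parameter subgroups $\lambda\colon\C^*\to H$. Downgrading the action along $\imath\colon\lambda(\C^*)\hookrightarrow H$ (Lemma \ref{lem:downgrading}), every $\lambda$-weight occurring in $\HH^0(X,L)$ lies between the values of $\mu_L$ at the sink and the source of the induced $\C^*$-action; this is a standard consequence of the Bialynicki--Birula decomposition (cf.~\cite{BWW}), a section of weight exceeding $\mu_{\max}$ being forced to vanish along the source and hence everywhere. As $\lambda$ ranges over all one-parameter subgroups, the resulting slabs cut out exactly the convex hull of the weights $\mu_L(Y)$, namely $\bigtriangleup(X,L,H)$, and the inclusion follows.

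The reverse inclusion $\bigtriangleup(X,L,H)\subseteq\Gamma(X,L,H)$ is the crux, and the step I expect to present the main difficulty. It suffices to show that every vertex of $\bigtriangleup(X,L,H)$ belongs to $\Gamma(X,L,H)$; by definition such a vertex equals $\mu_L(Y)$ for an extremal component $Y$. I would choose a generic $\lambda$ singling out $Y$ as the unique source of the downgraded $\C^*$-action, and appeal to the Bialynicki--Birula identification of the top $\lambda$-weight subspace of $\HH^0(X,L)$ with $\HH^0(Y,L_{|Y})$ via restriction. The hypothesis $\HH^0(Y,L_{|Y})\neq 0$ then produces a section of maximal $\lambda$-weight; since $Y$ is the unique maximizer and $\mu_L(Y)$ is a vertex, the $H$-weight of this section can only be $\mu_L(Y)$, whence $\mu_L(Y)\in\Gamma(X,L,H)$. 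Passing to convex hulls completes the proof. The delicate ingredients are exactly this identification of the extremal weight space of sections with $\HH^0(Y,L_{|Y})$ and the genericity of $\lambda$ isolating a prescribed vertex as source; both rest on the foundational torus-action theory of \cite{BWW}.
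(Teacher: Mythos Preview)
Your proposal is correct and follows essentially the same three-step structure as the paper's proof: the equality $\Gamma(X,L,H)=\bigtriangleup(G)$ via Corollary~\ref{cor:poly_sections}, the general inclusion $\Gamma\subseteq\bigtriangleup$ for ample $L$, and the reverse inclusion by producing a section of weight $\mu_L(Y)$ from the hypothesis $\HH^0(Y,L_{|Y})\neq 0$. The only difference is one of packaging: where the paper simply invokes \cite[Lemma~2.4(2)]{BWW} and \cite[Lemma~3.6]{BWW} for the two inclusions, you sketch the one-parameter-subgroup/Bia\l ynicki--Birula arguments that underlie those lemmas, so your ``delicate ingredients'' are exactly what the paper cites as black boxes.
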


\begin{proof}
The equality $\Gamma(X,L,H) =\bigtriangleup(G)$ follows by Corollary \ref{cor:poly_sections}; we will prove now that $\Gamma(X,L,H)= \bigtriangleup(X,L,H)$. 
The inclusion $\Gamma(X,L,H)\subseteq \bigtriangleup(X,L,H)$ in the case in which $L$ is ample is a general fact, (cf. \cite[Lemma 2.4(2)]{BWW}). 
To prove the other inclusion, we proceed as in the proof of \cite[Corollary 3.8]{BWW}. Let us take an extremal fixed point component $Y$, with weight $\mu_{L}(Y)\in \bigtriangleup(X,L,H)$; by hypothesis  $\HH^0(Y,L_{|Y})\ne 0$.  Applying \cite[Lemma 3.6]{BWW} the $H$-equivariant map $\HH^0(X,L)\to \HH^0(Y,L_{|Y})$ is surjective and gives an isomorphism of $\HH^0(Y,L_{|Y})$ with the eigenspace of $\HH^0(X,L)$  corresponding to the eigenvalue $\mu_L(Y)$, hence we deduce that $\mu_{L}(Y)\in \Gamma(X,L,H)$.
\end{proof}

Applying the machinery developed in \cite{BWW} we achieve the goal of this step: 
\begin{corollary}\label{cor:LSisolated}
Under the assumptions of Theorem \ref{thm:LB_isolated_pts},  $G$ is simple.\end{corollary}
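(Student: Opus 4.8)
The plan is to reduce the statement to a property of the weight polytope of $(X,L,H)$ and then combine the effectiveness of the torus action with the contact structure. First I would check that the hypotheses of Lemma \ref{lem:polyroot} are met: by assumption every extremal fixed point component $Y\subset X^H$ is a single point, so $L_{|Y}$ is a one–dimensional vector space and $\HH^0(Y,L_{|Y})\neq 0$ trivially. Hence Lemma \ref{lem:polyroot}, together with Corollary \ref{cor:poly_sections}, yields the chain of equalities $\bigtriangleup(X,L,H)=\Gamma(X,L,H)=\bigtriangleup(G)$, so the whole question becomes: for which reductive $G$ can the root polytope $\bigtriangleup(G)$ occur as the fixed–point polytope of a contact Fano manifold? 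Writing $G=Z\cdot G_1\cdots G_k$ with $Z$ the connected center and $G_i$ the simple factors, simplicity of $G$ amounts to showing $\dim Z=0$ and $k=1$.

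To eliminate the central torus I would exploit that $H\subset\Aut(X)$ acts effectively. If $\bigtriangleup(X,L,H)$ were contained in an affine hyperplane $\{m:\langle\lambda,m\rangle=c\}$ for some nonzero cocharacter $\lambda$, then the one–parameter subgroup $\lambda$ would act on $L$ with weight $\langle\lambda,\mu_L(Y)\rangle=c$ constant over all fixed components; since a nontrivial $\C^*$–action has distinct weights at its sink and source, $\lambda$ would act trivially, contradicting effectiveness. Thus $\bigtriangleup(X,L,H)$ is full–dimensional, i.e.\ of dimension $r=\dim H$. On the other hand $\bigtriangleup(G)=\conv(\Phi)$ spans only the subspace orthogonal to $\Lie(Z)$, whose dimension is $r-\dim Z$. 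Comparing dimensions forces $\dim Z=0$, so $G$ is semisimple.

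The remaining and genuinely harder point is to show that the root system $\Phi$ is irreducible, equivalently $k=1$. Here the combinatorics alone do not suffice, since $\conv(\Phi_1\sqcup\Phi_2)$ is perfectly full–dimensional, so the contact structure must intervene. My preferred route is to invoke that, for a Fano contact manifold with $\Pic X=\Z L$, the space $\HH^0(X,L)$ carries the Poisson bracket of the symplectic cone and, identified via Lemma \ref{lem:AutCAut} with $\Lie(G)$, is a \emph{simple} Lie algebra \cite{BeauvilleNilp}; this forces $k=1$ at once. If one instead insists on an internal argument, the idea is to assume $\Phi=\Phi_1\sqcup\Phi_2$ is an orthogonal splitting and to analyse the fixed locus of the subtorus $H^{G_2}$: by Corollary \ref{cor:Y0contact} its weight–zero component $Z_0$ is again a contact Fano manifold on which $G_1$ acts, and whose $H$–fixed points carry exactly the weights $\Phi_2\cup\{0\}$; one then tracks the symmetry $-\mu_L(Y)\in\cC(Y)$ of Lemma \ref{lem:sinksourceiso} together with the connectedness of the graph of fixed points and invariant curves (which holds because $X$ is connected) to show that no invariant curve can join a $\Phi_1$–weighted fixed point to a $\Phi_2$–weighted one, contradicting that connectedness. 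I expect this reducibility step to be the main obstacle: the subtlety is precisely that the contact constraints on compasses \emph{do} permit tangent directions leaving the span of $\Phi_1$, so the contradiction must be extracted globally rather than at a single fixed point.
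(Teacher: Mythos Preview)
Your reduction via Lemma \ref{lem:polyroot} and the effectiveness argument for semisimplicity are correct and coincide with what the paper does: the paper simply invokes \cite[Lemma~4.6]{BWW} for that step, and your full--dimensionality argument is exactly the content of that lemma.

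The simplicity step is where your proposal diverges, and neither of your two routes is complete as stated. Your preferred route~(a) over--cites Beauville: what \cite{BeauvilleNilp} establishes is that the contact moment map sends $X$ into the projectivized nilpotent cone of $\fg$ and (under suitable generation hypotheses on $L$) onto a nilpotent orbit closure; it does \emph{not} directly assert that $\fg$ is simple. To extract simplicity from that one still has to argue that an irreducible Picard--rank--one contact manifold cannot land in a product orbit $O_1\times O_2\subset\fg_1\oplus\fg_2$, which is essentially the same difficulty you are trying to avoid. Your route~(b) is closer in spirit to what the paper actually uses, namely \cite[Proposition~4.8]{BWW}, but your sketch is not a proof: connectedness of the fixed--point graph holds for \emph{any} smooth projective $H$--variety, so you cannot hope to derive a contradiction merely from the absence of invariant curves between the $\Phi_1$-- and $\Phi_2$--sides. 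The argument in \cite{BWW} instead works locally at a single extremal fixed point $y_\alpha$ with $\alpha\in\Phi_1$: in the free--sum polytope $\bigtriangleup(G)=\conv(\Phi_1\cup\Phi_2)$ the vertex $\alpha$ is joined by an edge to every vertex of $\bigtriangleup(G_2)$, each such edge direction must occur in $\cC(y_\alpha,X,H)$ with multiplicity one, and the contact symmetry (Lemma~\ref{rem:symweights}) then forces further compass elements that violate the cone constraint of \cite[Corollary~5.6]{BWW} (the analogue of Lemma~\ref{lem:compass_extremalpoints}(3) for the full torus). That is the missing mechanism in your outline.
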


\begin{proof}
Since $\HH^0(Y,L_{|Y})\simeq \C$ for every extremal fixed point component $Y\subset X^H$, Lemma \ref{lem:polyroot} provides the equality $\bigtriangleup(X,L,H) = \Gamma(X,L,H)$. Hence $G$ is a semisimple group by \cite[Lemma 4.6]{BWW}, and arguing as in the proof of \cite[Proposition 4.8]{BWW} we deduce the simplicity of $G$. 
\end{proof}

\begin{remark}\label{rem:scG}
If $G'$ is a simply connected covering of $G$, and $H' \subset G'$ is a maximal torus dominating $H$, we have equalities
$$\Mo(H')\otimes_\Z\R= \Mo(H)\otimes_\Z\R, $$
$$ \bigtriangleup(X,L,H')=\bigtriangleup(X,L,H)=\Gamma(X,L,H)=\Gamma(X,L,H'),$$
therefore we will assume from now on that $G$ is simply connected.
 \end{remark}

\begin{lemma} \label{lem:noCtype}
Under the assumptions of Theorem \ref{thm:LB_isolated_pts}, the group $G$ cannot be of type $\DC$ or of type $\DA_r$ with $r \ge 3$.
\end{lemma}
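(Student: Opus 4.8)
The plan is to assume, for contradiction, that $G$ is of type $\DC_r$ or $\DA_r$ with $r\ge 3$, and to derive a contradiction with the standing hypothesis $\Pic X=\ZZ L$. The guiding observation is that the adjoint varieties of these two series are precisely the two ``exceptional'' contact Fano manifolds singled out in Remark \ref{rem:contactfano}: for type $\DA_r$ the adjoint variety is $\P(\Omega_{\P^{n+1}})$, which has Picard number $2$, while for type $\DC_r$ it is $\P^{2n+1}$ with contact bundle $\cO_{\P^{2n+1}}(2)$, so that $L$ is not primitive. In either case an isomorphism $(X,L)\simeq(X_G,L_G)$ would already contradict $\Pic X=\ZZ L$; the real work is therefore to detect, directly from the torus data available at this stage, the feature of the root system that records this failure.

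First I would treat $\DA_r$, $r\ge 3$. Here the root system is simply laced, so every root is a vertex of the root polytope $\bigtriangleup(G)$ (the vertices of a root polytope are the long roots, which for $\DA_r$ are all the roots). By Lemma \ref{lem:polyroot} we have $\bigtriangleup(X,L,H)=\bigtriangleup(G)$, hence every fixed point whose weight is a root is extremal, and so by hypothesis is an isolated point. I would then choose a generic one-parameter subgroup $\lambda\subset H$ and apply the Bia{\l}ynicki-Birula decomposition (Theorem \ref{thm:BB_decomposition}) to compute $b_2(X)=\rho_X$ as the number of isolated fixed points having exactly one negative weight in their compass. The compasses at the extremal points are constrained by the contact symmetry (Lemma \ref{rem:symweights}) together with Lemma \ref{lem:compass_prop2}, which says that each compass vector points from $\mu_L(Y)$ toward the weight of another fixed point; the fact that the highest root of $\DA_r$ is $\varpi_1+\varpi_r$, i.e.\ that it ``touches'' the two ends of the Dynkin diagram, is what should produce at least two such points and hence $\rho_X\ge 2$, contradicting $\Pic X=\ZZ L$.

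For $\DC_r$ the mechanism is divisibility rather than Picard number. The long roots of $\DC_r$ are the vectors $\pm 2e_i$, pairwise orthogonal or antipodal; in particular they all lie in $2\,\Mo(H)$, recalling that $\Mo(H)$ is the full weight lattice by Remark \ref{rem:scG}. (This is also exactly why Lemma \ref{lem:reduction} carries no $\DC$ entry: there are no two long roots forming an angle of $2\pi/3$.) The aim would be to show that every fixed point weight $\mu_L(Y)$ lies in $2\,\Mo(H)$: the extremal ones are the long roots and are even, and one must use the contact and isotropy constraints of Lemma \ref{lem:sinksourceiso} to exclude inner fixed points sitting at the odd short roots $\pm e_i\pm e_j$. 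Once all weights are even, the ``halved'' weight datum is integral on the isolated fixed locus and, via the exact sequence $0\to\Mo(H)\to\Pic^H(X)\to\Pic X\to 0$ (valid since $X$ is Fano), defines an equivariant line bundle $M$ with $M^{\otimes 2}\simeq L$. Thus $L$ is $2$-divisible in $\Pic X$, once more contradicting $\Pic X=\ZZ L$.

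The step I expect to be the main obstacle is, in both cases, the control of the \emph{non-extremal} part of the fixed locus, which is not described by the hypotheses of Theorem \ref{thm:LB_isolated_pts}: for $\DA_r$ one must understand the compasses (and rule out spurious weight-zero components) well enough to make the $b_2$ count rigorous, and for $\DC_r$ one must genuinely exclude fixed points at the short roots in order to conclude that $L$ is even. A cleaner-to-state but technically heavier alternative would be to use Lemma \ref{lem:AutCAut} to realize $X$ $G$-equivariantly inside $\P(\g^\vee)$ through $|L|$ and to identify its image with the unique closed orbit $X_G$, reducing everything to the computation of $\Pic X_G$; but this merely relocates the difficulty to proving that the map is a closed embedding onto a subvariety of dimension exactly $\dim X_G$, which is delicate.
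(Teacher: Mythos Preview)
The paper does not prove this lemma from scratch. Its proof consists of checking that Remark~\ref{rem:scG}, Corollary~\ref{cor:LSisolated}, and Lemmas~\ref{lem:AutCAut}, \ref{lem:polyroot} together yield \cite[Assumptions~5.1, 5.2]{BWW}, and then invoking \cite[Proposition~5.9 and Lemma~5.10]{BWW}, where the exclusion of types $\DA_r$ ($r\ge 3$) and $\DC_r$ is actually carried out. So the substantive argument lives in \cite{BWW}; here there is only hypothesis-checking.

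Your proposal is therefore not an alternative proof but an attempt to redo that portion of \cite{BWW} directly, and --- as you yourself flag --- it is incomplete. The gaps are real and not just cosmetic. For $\DA_r$: it is true that every root is a vertex, so fixed points at roots are isolated; but to read off $b_2(X)$ from Theorem~\ref{thm:BB_decomposition} you must identify the components $Y$ with $\nu^+(Y)=1$ for a generic one-parameter subgroup, and nothing in the hypotheses of Theorem~\ref{thm:LB_isolated_pts} prevents such $Y$ from being inner components (e.g.\ at weight~$0$) of arbitrary shape. Your remark that $\beta=\varpi_1+\varpi_r$ ``should produce two such points'' is a statement about $X_G$, not about $X$. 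For $\DC_r$: even if every fixed-point weight lay in $2\Mo(H)$, this would not by itself force $L$ to be $2$-divisible. Divisibility in $\Pic^H(X)$ requires, for each fixed component $Y$, that $L_{|Y}$ be $2$-divisible in $\Pic(Y)$ as well; and by Corollary~\ref{cor:Y0contact} any positive-dimensional component $Y_0$ of weight~$0$ is a contact Fano manifold with contact line bundle $L_{|Y_0}$, which in the generic case of Remark~\ref{rem:contactfano} generates $\Pic(Y_0)$ and is certainly not a square. So in addition to ruling out fixed points at the short roots you would also have to rule out positive-dimensional central components, and that is at least as hard.

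In short, both branches of your strategy run aground on exactly the obstacle you name: the inner fixed locus is not governed by the hypotheses of Theorem~\ref{thm:LB_isolated_pts}. A self-contained argument along your lines would essentially reproduce the compass analysis of \cite[\S5]{BWW}, which is why the paper simply cites it.
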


\begin{proof}
Using Remark \ref{rem:scG}, Corollary \ref{cor:LSisolated}, and Lemmas \ref{lem:AutCAut}, \ref{lem:polyroot}, one has that \cite[Assumptions 5.1, 5.2]{BWW} are satisfied, so the result follows from \cite[Proposition 5.9 and Lemma 5.10]{BWW}.\end{proof}

\subsection{Step II: Downgradings to subtori of rank two and one}\label{ssec:StepII}

In this subsection we consider the restrictions of the action of $G$ on $(X,L)$ to a subgroup $S_2\subset G$ isomorphic to $\SL(3)$ satisfying the requirements of Lemma \ref{lem:reduction}, and then to certain subgroups of $S_2$ isomorphic to $\SL(2)$ as described in Section \ref{ssec:downgradings} (see Notation \ref{notn:downgradings}). We will consider one of these subgroups for each of the roots of $S_2$, so we will start by introducing some notation (see Figure \ref{fig:hexagon2}).

\begin{notation}\label{notn:donwgradings2}
Denoting by $\alpha_0,\dots,\alpha_5$, $\alpha_6:=\alpha_0\in\Mo(H_2)$ the roots of $S_2$ ordered counterclockwise,
and setting $\beta_i=(\alpha_i+\alpha_{i+1})/3$, $i=0,\dots,5$, the lattice 
$\Mo(H_2)$ is generated by $\alpha_0$ and $\beta_0$. In the sequel, the indices of $\alpha$'s and $\beta$'s are between $0$ and $5$ and by convention they are taken modulo $6$. For every $i= 0,\dots,5,$ let $H^i_1\subset H_2$ be the $1$-dimensional subtorus corresponding to the orthogonal projection $\pi_i^*:\Mo(H_2)\to\Mo(H_1^i)\simeq \Z$ sending $\beta_i$ to zero. As in Section \ref{ssec:downgradings}, each projection determines a subgroup $ S_1^i\subset S_2$ isomorphic to $\SL(2)$, whose Lie algebra contains $\fg_{\alpha_{i-1}}$. The lattice points contained in the root polytope  $\bigtriangleup(S_2)$ are precisely the points of the set $$\Hex:=\{0, \alpha_i,\beta_i|\,\,\,i=0,\dots,5\},$$ and the lattice points in the root polytope  $\bigtriangleup(S^i_1)$ are the integers $\{-2,-1,0,1,2\}$. 
\end{notation}

\begin{figure}[h!]
\includegraphics[height=5cm]{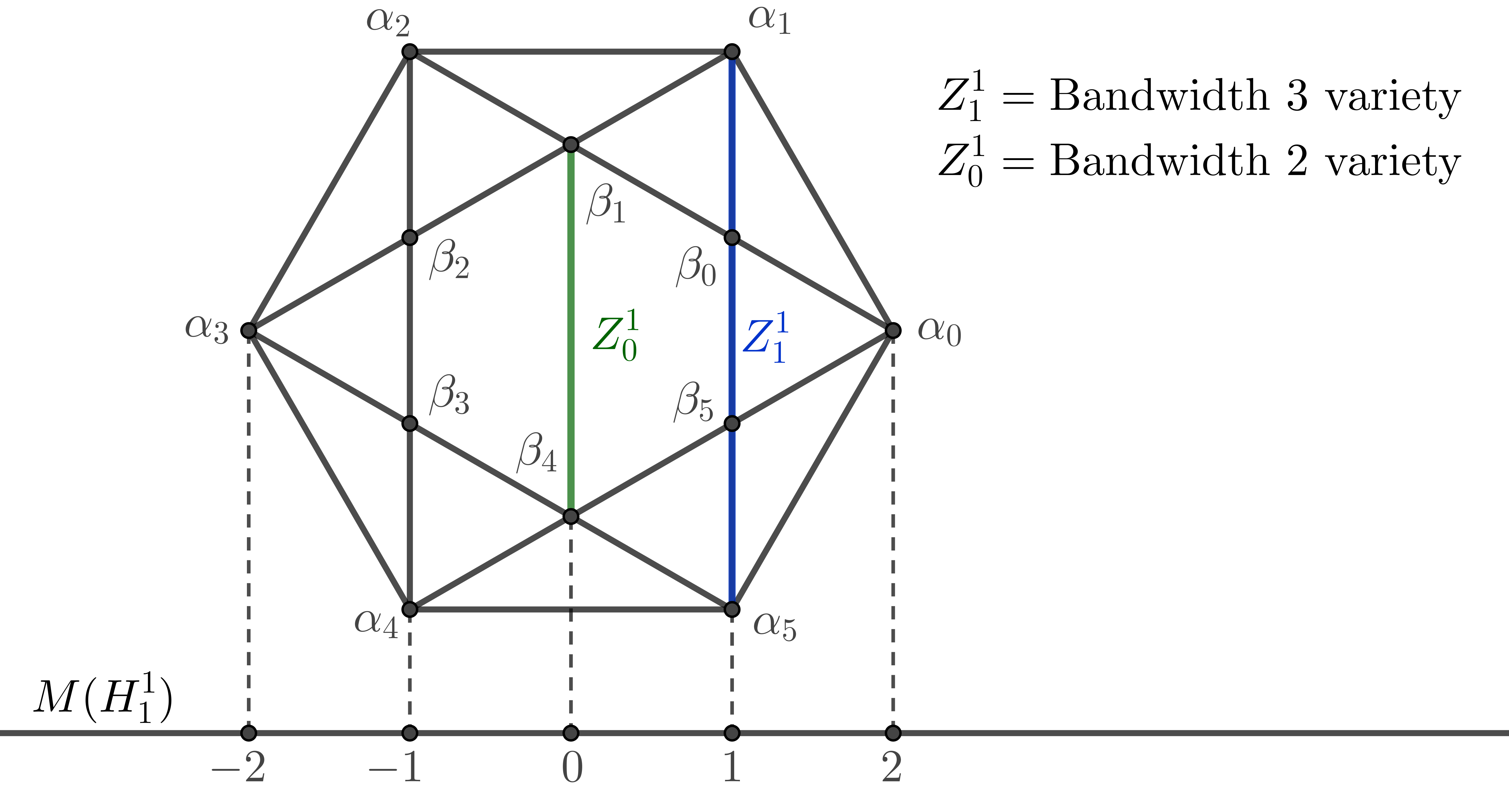}
\caption{$H_2$-weights and downgrading to $H^1_1$.}
\label{fig:hexagon2}
\end{figure}

In Step III we will describe completely the possible isomorphism classes of the fixed point components of the action of $H_2$. Here we will only introduce some notation about them, and their most basic properties. We note first that the weights of the induced action of $H_2$ and $H^i_1$  on $L$ are elements of $\Hex$ and of the set $\{-2,-1,0,1,2\}$, respectively. In fact, by Lemma \ref{lem:polyroot} and Lemma \ref{lem:downgrading}: 

\begin{lemma}\label{lem:polyroot2}
The polytope of fixed points $\bigtriangleup(X,L,H_2)$ is equal to $\bigtriangleup(S_2)$.  
\end{lemma}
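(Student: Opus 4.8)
The plan is to combine the general description of the polytope of sections from Corollary \ref{cor:poly_sections} with the downgrading behaviour of polytopes of fixed points recorded in Lemma \ref{lem:downgrading}(iv). The key observation is that everything we need has essentially been established already for the full torus $H$ in Lemma \ref{lem:polyroot}: there we showed $\bigtriangleup(X,L,H) = \bigtriangleup(G)$. The present statement is the $H_2$-analogue, so the natural strategy is to push the equality for $H$ down to $H_2$ via the inclusion $H_2 \subset H$, being careful that the downgrading is $\imath^*:\Mo(H)\to\Mo(H_2)$ as fixed in Notation \ref{notn:downgradings}.

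First I would invoke Lemma \ref{lem:downgrading}(iv), which for the subtorus $H_2\subset H$ gives
$$
\bigtriangleup(X,L,H_2) = \imath^*\bigtriangleup(X,L,H).
$$
By Lemma \ref{lem:polyroot} (whose hypotheses hold under the assumptions of Theorem \ref{thm:LB_isolated_pts}, since the extremal fixed point components are isolated points and hence satisfy $\HH^0(Y,L_{|Y})=\C\ne 0$) we have $\bigtriangleup(X,L,H)=\bigtriangleup(G)$, the root polytope of $G$. Hence
$$
\bigtriangleup(X,L,H_2) = \imath^*\bigtriangleup(G).
$$
So the statement reduces to the purely Lie-theoretic claim that $\imath^*\bigtriangleup(G)=\bigtriangleup(S_2)$, i.e.\ that the orthogonal projection of the root polytope of $G$ onto the plane $\Mo(H_2)\otimes\R$ is exactly the hexagon spanned by the roots of $S_2$.

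This last equality is precisely what Lemma \ref{lem:reduction} delivers: $\imath^*$ sends the two chosen long roots $\alpha,\alpha'$ to a base of simple roots of $S_2$, hence the six roots $\Phi\cap(\Z\alpha_i+\Z\alpha_j)$ map onto the six vertices of the hexagon $\bigtriangleup(S_2)$, while every other root of $G$ maps into the interior of $\bigtriangleup(S_2)$. Since $\bigtriangleup(G)=\conv(\Phi)$ and $\imath^*$ is linear, $\imath^*\bigtriangleup(G)=\conv(\imath^*\Phi)$, and the vertices of this convex hull are exactly the images of the long roots forming $\Phi_{S_2}$; thus $\imath^*\bigtriangleup(G)=\conv(\Phi_{S_2})=\bigtriangleup(S_2)$. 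Combining the three displayed equalities yields $\bigtriangleup(X,L,H_2)=\bigtriangleup(S_2)$, as claimed.

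I do not anticipate a serious obstacle here: the argument is a two-line assembly of results already in place, and the only point requiring care is bookkeeping—verifying that the projection $\imath^*$ used in Lemma \ref{lem:downgrading}(iv) is literally the same map $\imath^*$ appearing in Lemma \ref{lem:reduction} and Notation \ref{notn:downgradings}, and that the convex hull of a finite set of weights commutes with a linear projection so that interior roots cannot create new vertices. The mild subtlety worth stating explicitly is that taking $\imath^*$ of a convex hull gives the convex hull of the images, so that the interior-mapping conclusion of Lemma \ref{lem:reduction} suffices to pin down the boundary of $\bigtriangleup(X,L,H_2)$ without separately checking every lattice point.
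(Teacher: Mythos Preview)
Your proposal is correct and follows essentially the same approach as the paper, which simply invokes Lemma \ref{lem:polyroot} and Lemma \ref{lem:downgrading} without further comment. Your version is more detailed in that you spell out the Lie-theoretic step $\imath^*\bigtriangleup(G)=\bigtriangleup(S_2)$ via Lemma \ref{lem:reduction}, which the paper leaves implicit in the construction of $S_2$.
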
 
 
\begin{remark}\label{rem:isolatedpoints} 
Using Lemma \ref{lem:reduction} the inverse image in $\bigtriangleup(G)$ of every vertex of $\bigtriangleup(S_2)$ (resp.  $\bigtriangleup(S_1)$) is unique. Then  the basic properties of downgradings (see Lemma \ref{lem:downgrading} (iii,v)) tell us that the extremal fixed components of the action of $H_2$ (resp.  $H_1$) on $X$ are isolated points. 
\end{remark}

\begin{notation}\label{not:fixedH2}
In the sequel, the extremal $H_2$-fixed points associated to the weights $\alpha_i$ will be denoted by $y_i$, the fixed point components associated to weights $\beta_i$ will be denoted by $Y_{\beta_i,k}$, and  the fixed point components associated to the weight zero will be denoted by $Y_{0,k}$  (for $k$ in a finite set of indices). Moreover we set $Y_{\beta_i}:=\bigsqcup_k Y_{\beta_i,k}$, for $i=0,\dots,5$, $Y_0:=\bigsqcup_{k} Y_{0,k}$. 
\end{notation}

\begin{notation}\label{not:Z} For every $i=0,\dots, 5$, the irreducible fixed point components of the $H_1^i$-action will be denoted by 
$$
Z^i_{-2}=\{y_{i+2}\},\,\,Z^i_{-1,k},\,\,Z^i_{0,k},\,\,Z^i_{1,k},\,\,Z^i_{2}=\{y_{i-1}\},
$$
where the first subindex indicates the $H_1^i$-weight of $L$, and $k$ belongs to a  finite set of indices for every weight. The next statement shows that the components of weights $\pm 1$ are unique, so we will simply write $Z_{\pm 1}^i:=Z^i_{\pm 1,k}$. In particular $Z_{1}^i$ will contain all the irreducible $H_2$-fixed components of weights $\alpha_i,\beta_{i-1},\beta_{i-2},\alpha_{i-2}$. Moreover for every component $Y_{\beta_i,k}$ there exists a unique $H_1^i$-fixed component of weight zero containing it; however, it is not true a priori that given a $H_1^i$-fixed component of weight zero $Z^i_{0,k}$ there exists an $H_2$-fixed component of weight $\beta_i$ contained in it. In any case, we may adjust the indices $k$ so that we may write $Y_{\beta_i,k}\subset Z^i_{0,k}$. 
\end{notation}

\begin{lemma}\label{lem:defXmu}
For every $i=0,\dots, 5$ there exists a unique irreducible fixed point component $Z_1^i$ of the action of $H^i_1$ on $(X,L)$ associated to the weight $1$.
\end{lemma}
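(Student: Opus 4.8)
The plan is to prove the equivalent statement that the locus of $H^i_1$-fixed points of weight $1$ is \emph{connected}: since the fixed locus of a torus action on a smooth variety is smooth, each of its connected components is automatically irreducible, so connectedness of the weight-$1$ locus is precisely the assertion that there is a unique irreducible weight-$1$ component $Z^i_1$.

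First I would pin down the local structure of the action at the source. The $\C^*=H^i_1$-action has bandwidth $4$, with weights in $\{-2,-1,0,1,2\}$ and, by Remark \ref{rem:isolatedpoints} and Notation \ref{not:Z}, isolated source $y_{i-1}$ (weight $2$) and isolated sink $y_{i+2}$ (weight $-2$). At $y_{i-1}$ all compass elements are negative by Lemma \ref{lem:compass_prop2} (any other fixed component has strictly smaller weight), hence all tangent weights on $T_{X,y_{i-1}}$ are positive; the contact symmetry (Lemma \ref{rem:symweights}) forces the $2n$ weights on $F_{y_{i-1}}$ to come in pairs summing to $\mu_L(y_{i-1})=2$, and being positive they must all equal $1$. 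Thus $\cN^1(y_{i-1})=F_{y_{i-1}}$ has rank $2n$, the only other tangent direction is the weight-$2$ line mapping onto $L$, and in particular $\C^*$ acts on $F_{y_{i-1}}$ as a homothety, hence trivially on $\P(F_{y_{i-1}})$.

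Next I would identify the weight-$1$ locus with the endpoints of contact lines through $y_{i-1}$. Any weight-$1$ fixed point $p$ is inner, so $\cN^+(p)\neq 0$ and there is an orbit leaving $p$ and limiting at a fixed point of strictly larger weight; since $2$ is the only weight above $1$ and $y_{i-1}$ is the unique weight-$2$ point, this orbit joins $p$ to $y_{i-1}$ with $\mu_L$-difference exactly $1$. Comparing this with the tangent weight of the connecting curve $C$ at $y_{i-1}$ (which is $1$ or $2$), the relation $|\Delta\mu_L|=(\text{tangent weight})\cdot(L\cdot C)$ forces the tangent weight to be $1$ and $L\cdot C=1$; hence $C$ is a contact line, tangent at $y_{i-1}$ to a direction of $\P(F_{y_{i-1}})$ (the only weight-$1$ directions), with $p$ as its weight-$1$ endpoint. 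Conversely, as in the proof of Lemma \ref{lem:AutCAut}, $X$ carries a complete unsplit family of contact lines whose tangent directions span $\P(F)$; since $\C^*$ acts trivially on $\P(F_{y_{i-1}})$, every contact line through $y_{i-1}$ is $\C^*$-invariant, its two fixed points being $y_{i-1}$ and a weight-$1$ point. Therefore the weight-$1$ locus is exactly the image of the Bia\l ynicki--Birula limit map restricted to the union $\cS$ of all contact lines through $y_{i-1}$.

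Finally I would deduce connectedness: $\cS$ is connected because all its irreducible pieces contain $y_{i-1}$, and the limit map $\cS\setminus\{y_{i-1}\}\to X^{H^i_1}$ is a morphism onto the weight-$1$ locus, so the latter is connected provided $\cS\setminus\{y_{i-1}\}$ is. The hard part lies exactly here: one must rule out that the family of contact lines through $y_{i-1}$ splits into several subfamilies meeting only at $y_{i-1}$, i.e.\ one must show that this family, equivalently the variety of minimal rational tangents at $y_{i-1}$, is connected. I expect to obtain this from the Legendrian nature of that variety together with the unsplitness of the contact-line family and the homogeneity supplied by the residual $H/H^i_1$-action on $\P(F_{y_{i-1}})$; in the low-weight regimes where this geometric input is insufficient, one can instead invoke the explicit classification of equalized bandwidth two and three actions (Theorems \ref{thm:uniqueBW2} and \ref{thm:bw3}) to identify the relevant component directly. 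Once connectedness is established, smoothness of the fixed locus yields irreducibility, and hence the uniqueness of $Z^i_1$.
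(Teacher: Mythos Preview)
There is a genuine gap, and you have located it yourself: you need the family of contact lines through $y_{i-1}$ (equivalently the VMRT at that point) to be connected, and you do not prove this. Your suggested remedies do not close it. Appealing to ``the Legendrian nature \dots\ and the homogeneity supplied by the residual $H/H^i_1$-action'' is not an argument: that residual torus has rank $r-1$, which under the paper's hypotheses can be much smaller than $\dim\P(F_{y_{i-1}})=2n-1$, so there is no reason its action forces an invariant subvariety of $\P(F_{y_{i-1}})$ to be irreducible. Invoking Theorems \ref{thm:uniqueBW2} and \ref{thm:bw3} is circular here: those classify bandwidth-two and bandwidth-three actions, while the $H^i_1$-action on $X$ has bandwidth four; in the paper those theorems are applied only to the subvarieties $Z^i_{0,k}$ and $Z^i_1$ \emph{after} the present lemma has been used to single out $Z^i_1$.

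The paper's proof takes a completely different route and avoids the VMRT entirely. It is a two-line application of the Bia{\l}ynicki-Birula decomposition (Theorem \ref{thm:BB_decomposition}): since the source is an isolated point, the decomposition expresses $\dim H_2(X,\Z)$ as the number of weight-$1$ fixed components, and then $\Pic(X)\simeq\Z$ gives $\dim H_2(X,\Z)=1$, hence uniqueness. Your local analysis at the source (tangent weights $\{1^{2n},2\}$, identification of weight-$1$ endpoints with contact lines) is correct as far as it goes, but the paper replaces all of that geometry with a single topological count.
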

\begin{proof}
We will do the proof in the case $i=1$. We note first that, arguing as in Remark \ref{rem:isolatedpoints}, $y_{0}$ is the only isolated $H^1_1$-fixed point associated to the weight $2$. Then applying Theorem \ref{thm:BB_decomposition}, the dimension of $H_2(X,\Z)$ equals the number of irreducible $H^1_1$-fixed point components associated to the weight $1$. Since $\Pic(X)\simeq\Z$, we conclude that there is a unique irreducible component $Z_1^1$ of this kind.
\end{proof}

We end this step by studying the induced action of $H_2/H^i_1$ on $Z_{1}^i$ and $Z_{0,k}^i$: 

\begin{lemma}\label{lem:XmuZ}
For every $i,k$, being $Z_{0,k}^i$ an $H_1^i$-fixed component containing an $H_2$-fixed component $Y_{\beta_i,k}$, the torus $H_2/H^i_1$ acts on $(Z_{1}^i,L_{|Z_{1}^i})$ and $(Z_{0,k}^i,L_{|Z_{0,k}^i})$ with bandwidth equal to three and two, respectively.
\end{lemma}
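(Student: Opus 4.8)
The plan is to reduce the whole statement to an explicit reading of weights on the hexagon $\Hex$. By the rotational symmetry of the construction we may fix $i=1$, exactly as in the proof of Lemma \ref{lem:defXmu}. The residual torus $H_2/H_1^1$ is one dimensional, with lattice of characters $\ker\pi_1^*\subset \Mo(H_2)$; by Lemma \ref{lem:downgrading}(iii) its fixed locus on any $H_1^1$-fixed component $Z$ is precisely $X^{H_2}\cap Z$, and by Lemma \ref{lem:downgrading}(i) the weight of $L$ at such a point is the image of its $H_2$-weight under the projection onto $\Mo(H_2/H_1^1)$. Thus everything becomes a computation of these residual weights for the points of $\Hex$ lying over $1$ and over $0$ under $\pi_1^*$ (see Figure \ref{fig:hexagon2}).

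For $Z_1^1$ I would argue directly. By Notation \ref{not:Z} the $H_2$-fixed components contained in $Z_1^1$ are those of weights $\alpha_1,\beta_0,\beta_5,\alpha_5$, i.e.~the four points of $\Hex$ mapping to $1$ under $\pi_1^*$. These four points are collinear and equally spaced along the fibre $\{\pi_1^*=1\}$, the spacing being a generator of $\ker\pi_1^*=\Mo(H_2/H_1^1)$; hence their residual weights are four consecutive integers $-1,0,1,2$, with the extremal values $-1$ and $2$ attained at the \emph{isolated} extremal points $y_1$ and $y_5$. Consequently the sink and source of the $H_2/H_1^1$-action on $Z_1^1$ are $y_1,y_5$, and the bandwidth is $2-(-1)=3$.

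For $Z_{0,k}^1$ the upper bound is equally easy: the $H_2$-weights over $0$ are $0$ and the antipodal pair $\pm\beta_1$, with residual weights $0$ and $\mp1$, so the bandwidth is at most $2$. The delicate point — the step I expect to be the main obstacle — is to show the bandwidth is exactly $2$ and not smaller, equivalently that $Z_{0,k}^1$ carries a nontrivial residual action realising both extreme weights. Here the plan is to invoke the contact geometry. By Corollary \ref{cor:Y0contact} the pair $(Z_{0,k}^1,L_{|Z_{0,k}^1})$ is itself a contact Fano manifold, so by Lemma \ref{lem:legendrian2} it is \emph{not} isotropic in $X$; on the other hand $Y_{\beta_1,k}\subset Z_{0,k}^1$ has nonzero weight $\beta_1$ and is therefore isotropic by Lemma \ref{lem:sinksourceiso}. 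Hence $Z_{0,k}^1\neq Y_{\beta_1,k}$, and since the $H_2/H_1^1$-fixed locus of $Z_{0,k}^1$ equals $X^{H_2}\cap Z_{0,k}^1$, the residual action must be nontrivial.

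Finally, the residual $\C^*$-action is compatible with the induced contact structure, so the sign argument of Lemma \ref{lem:weightsBW2} applies verbatim: the weight of $L$ at the sink is strictly negative and at the source strictly positive. As the only available negative and positive residual weights are $-1$ and $1$, the sink and source have weights $-1$ and $1$; this yields bandwidth $2$ and, as a bonus, forces $Z_{0,k}^1$ to contain an $H_2$-fixed point of weight $-\beta_1$, resolving the ambiguity flagged in Notation \ref{not:Z}. The only genuinely non-formal ingredient is thus the isotropic-versus-contact dichotomy that guarantees nontriviality of the residual action; once that is in hand, the contact sign estimate pins down the extreme weights and the bandwidth.
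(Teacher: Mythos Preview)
Your argument is correct and, for the $Z_{0,k}^i$ part, follows a genuinely different route from the paper.

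For $Z_1^i$ both proofs are essentially identical: one reads off the extremal residual weights on the segment $\{\pi_i^*=1\}$ of the hexagon and finds bandwidth $3$. Your explicit normalisation of the four residual weights as consecutive integers is a choice of linearization, but the bandwidth computation does not depend on it.

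For $Z_{0,k}^i$ the two arguments diverge. The paper first uses Lemma~\ref{lem:sinksourceiso} in its \emph{compass} form: the element $-\beta_i$ occurs with multiplicity $\dim Y_{\beta_i,k}+1\geq 1$ in $\cC(Y_{\beta_i,k},X,H_2)\cap\ker\pi_i^*=\cC(Y_{\beta_i,k},Z_{0,k}^i,H_2/H_1^i)$, forcing $Y_{\beta_i,k}\subsetneq Z_{0,k}^i$. It then excludes a weight-$0$ extremal via Lemma~\ref{lem:legendrian2}: a central component has $\nu^+=\nu^-$, so if it were extremal both would vanish and $Z_{0,k}^i$ would equal that component, contradicting the strict inclusion just obtained. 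Your route instead uses the isotropic/contact dichotomy---$Y_{\beta_i,k}$ is isotropic (Lemma~\ref{lem:sinksourceiso}) while $Z_{0,k}^i$ is contact (Corollary~\ref{cor:Y0contact}) and hence not isotropic---to get the strict inclusion, and then borrows the sign estimate from the \emph{proof} of Lemma~\ref{lem:weightsBW2} (which indeed does not use the bandwidth-two hypothesis) to force the sink and source weights to $-1$ and $+1$. Both approaches are valid; yours has the pleasant by-product of directly exhibiting an $H_2$-fixed component of weight $\beta_{i+3}$ inside $Z_{0,k}^i$, while the paper's version makes the compass information at $Y_{\beta_i,k}$ explicit, which is reused later in Proposition~\ref{prop:compassHex}.

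One small point: when you write ``so by Lemma~\ref{lem:legendrian2} it is not isotropic'', the non-isotropy is not the statement of that lemma but rather a consequence of the surjectivity of the induced contact form $\sigma_{|T_{Z_{0,k}^i}}$ established in its proof (equivalently, of the definition of contact manifold). It would be cleaner to cite this directly.
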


\begin{proof} 
As in Notation \ref{not:Z}, the extremal fixed points of the $H_2/H^i_1$-action have weights $\alpha_i$ and $\alpha_{i-2}$, thus the bandwidth of the action is three. In the case of $Z_{0,k}^i$ we note first that $Y_{\beta_i,k}\subset Z_{0,k}^i$ is, by construction, an extremal fixed point of the  $H_2/H^i_1$-action on $Z_{0,k}^i$. Applying Lemma \ref{lem:sinksourceiso} it follows that $-\beta_i$ appears with multiplicity $\dim Y_{\beta_i,k}+1\geq 1$ in $\cC(Y_{\beta_i,k},X,H_2)\cap \ker(\pi_i^*)=\cC(Y_{\beta_i,k},Z_{0,k}^i,H_2/H_1^i)$ (see Lemma \ref{lem:compass_prop}), so $Y_{\beta_i,k}\subsetneq Z_{0,k}^i$. Then the other extremal component of the $H_2/H^i_1$-action on $Z_{0,k}^i$ must be an $H_2$-fixed component  of weight either zero or $-\beta_i=\beta_{i+3}$. The first case is not possible by Lemma \ref{lem:legendrian2}, so we conclude that the other extremal component is associated to the weight $\beta_{i+3}$, and that the bandwidth of the action is two.
\end{proof}

%!TEX root = contactFreudenthal.tex

\subsection{Step III: Computing the fixed components and compasses of the action of $H_2$}\label{ssec:StepIII}

We start with a statement collecting properties of compasses at extremal fixed point components. Keeping in mind that $\bigtriangleup(X,H_2,L)=\bigtriangleup(S_2)$ (cf. Lemma \ref{lem:polyroot2}), the result follows by applying \cite[Corollary 5.6]{BWW} to our case.   
\begin{lemma} \label{lem:compass_extremalpoints} 
Let $\alpha\in \bigtriangleup(S_2)$ be a vertex, $y_{\alpha}\in X^{H_2}$ be the corresponding fixed point, $\delta$ be an edge of $\bigtriangleup(S_2)$ containing $\alpha$, and write $\delta\cap\Mo(H_2)=\{\alpha,\alpha'\}$. Then:
\begin{itemize}
\item [(1)] The compass $\cC(y_{\alpha},X,H_2)$ contains ${\alpha}^{\prime}-\alpha$ with multiplicity one.
\item [(2)] $\cC(y_{\alpha},X,H_2)$ contains $-\alpha$ with multiplicity one.
\item [(3)] Let $\tau\in \Mo(H_2)\otimes_\Z\R$ be the convex cone generated by the shift $\bigtriangleup{( S_2)}- \alpha$. Then $\cC(y_{\alpha},X,H_2)\subseteq \tau \cap (\alpha-\tau)\cap \Mo(H_2)$. 
\end{itemize}
\end{lemma}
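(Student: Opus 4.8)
The plan is to treat $y_\alpha$ as an isolated fixed point and read the compass directly off the weight decomposition of $T_{X,y_\alpha}$. Since $\alpha$ is a vertex of $\bigtriangleup(S_2)=\bigtriangleup(X,L,H_2)$ (Lemma \ref{lem:polyroot2}), Remark \ref{rem:isolatedpoints} gives that $y_\alpha$ is isolated, so $\cN_{y_\alpha|X}=T_{X,y_\alpha}$ and $\cC(y_\alpha,X,H_2)$ is exactly the multiset $\{-w\}$ of the $H_2$-weights $w$ on $T_{X,y_\alpha}$. Three inputs drive the argument: Lemma \ref{lem:compass_prop2} (every $\nu\in\cC$ points from $\alpha$ towards the weight of another fixed point), the contact symmetry of Lemma \ref{rem:symweights}, and the equality $\Gamma(X,L,H_2)=\bigtriangleup(S_2)$, which follows by downgrading the equality $\Gamma(X,L,H)=\bigtriangleup(G)$ of Lemma \ref{lem:polyroot} along $\imath^*$ (Lemma \ref{lem:downgrading}(iv)).

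I would prove (3) first, as it bounds the compass and feeds the other two parts. The inclusion $\cC(y_\alpha,X,H_2)\subseteq\tau$ is immediate from Lemma \ref{lem:compass_prop2}: for $\nu\in\cC$ there is a fixed point $Y'$ with $\mu_L(Y')=\alpha+\lambda\nu$, $\lambda>0$, and since $\mu_L(Y')\in\bigtriangleup(S_2)$ by Lemma \ref{lem:polyroot2} we get $\nu\in\mathrm{cone}(\bigtriangleup(S_2)-\alpha)=\tau$. For the complementary bound, contact compatibility (Lemma \ref{rem:symweights}) says the $H_2$-weights of $F_{|y_\alpha}$ occur in pairs summing to $\mu_L(y_\alpha)=\alpha$; passing to $T_{X,y_\alpha}=F_{|y_\alpha}\oplus L_{|y_\alpha}$ and negating, the compass is carried into itself by the involution $\nu\mapsto -\alpha-\nu$. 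Combined with $\cC\subseteq\tau$, this confines the compass to the bounded region cut out by $\tau$ and its reflection $-\alpha-\tau$ through $-\alpha/2$, i.e. the rhombus with vertices $0$, the two edge directions at $\alpha$, and $-\alpha$, which is the region asserted in (3).

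Part (2) is then a direct application of Lemma \ref{lem:sinksourceiso} to the compatible $H_2$-action: since $\mu_L(y_\alpha)=\alpha\neq0$, the weight $-\alpha$ occurs in $\cC(y_\alpha,X,H_2)$ with multiplicity $\dim y_\alpha+1=1$. (Equivalently: $y_\alpha$ isolated forces weight $0$ to be absent from $T_{X,y_\alpha}$, hence from $F_{|y_\alpha}$, so by the pairing of Lemma \ref{rem:symweights} the weight $\alpha$ is absent from $F_{|y_\alpha}$ and enters $T_{X,y_\alpha}$ only through $L_{|y_\alpha}$.)

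The main work is (1). Writing $\delta\cap\Mo(H_2)=\{\alpha,\alpha'\}$, the vector $\alpha'-\alpha$ is a primitive generator of an extremal ray of $\tau$. First I would establish existence: by the standard local description of the polytope of sections at an isolated extremal point, $\Gamma(X,L,H_2)$ agrees near $\alpha$ with $\alpha+\mathrm{cone}(\cC(y_\alpha,X,H_2))$, and since $\Gamma(X,L,H_2)=\bigtriangleup(S_2)$ this forces $\mathrm{cone}(\cC)=\tau$; a pointed cone cannot be generated by vectors lying off one of its extremal rays, so $\cC$ must meet $\R_{\geq 0}(\alpha'-\alpha)$, and by (3) the only nonzero lattice point of this ray inside the rhombus is $\alpha'-\alpha$ itself. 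For multiplicity one I would localize along the edge: let $H_\delta\subset H_2$ be the one-parameter subgroup dual to an inner normal of $\delta$, so that $\alpha-\alpha'$ is an $H_\delta$-fixed weight of $T_{X,y_\alpha}$ and the component $S_0$ of $X^{H_\delta}$ through $y_\alpha$ is positive-dimensional; the residual torus $H_2/H_\delta\simeq\C^*$ acts on $S_0$ with fixed points only among $\{y_\alpha,y_{\alpha'}\}$ (the sole $H_2$-fixed points over $\delta$), so $S_0$ is a smooth projective $\C^*$-variety with exactly two fixed points, whence by the Bia{\l}ynicki--Birula decomposition (Theorem \ref{thm:BB_decomposition}) together with $b_2(S_0)\geq 1$ one gets $S_0\simeq\P^1$. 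Then the $H_\delta$-fixed tangent space $T_{S_0,y_\alpha}$ is one-dimensional, so $\alpha-\alpha'$ has multiplicity one in $T_{X,y_\alpha}$, giving $\alpha'-\alpha\in\cC(y_\alpha,X,H_2)$ with multiplicity one. The delicate point throughout is precisely this existence/connectedness step: one must know that an extremal ray of $\tau$ is actually realized by a compass vector (equivalently, that $y_\alpha$ and $y_{\alpha'}$ are joined by an invariant $\P^1$), which is exactly the content of the local cone-generation property of $\Gamma$ imported from \cite[Corollary 5.6]{BWW}.
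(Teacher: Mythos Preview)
Your argument is correct and is genuinely different from the paper's: the paper simply verifies that the $S_2$-action satisfies \cite[Assumptions~5.1]{BWW} (only items (4) and (5) being nontrivial, via Lemma~\ref{lem:polyroot2} and Remark~\ref{rem:isolatedpoints}) and then invokes \cite[Corollary~5.6]{BWW} wholesale, whereas you unpack the content of that corollary using only the tools already assembled here. Your derivation of (3) from Lemma~\ref{lem:compass_prop2} together with the contact pairing of Lemma~\ref{rem:symweights}, and of (2) from Lemma~\ref{lem:sinksourceiso}, is exactly right. For (1), your existence step is in fact self-contained and does \emph{not} need to be ``imported'' from \cite{BWW}: the inclusion $\Gamma(X,L,H_2)\subseteq\alpha+\operatorname{cone}(\cC)$ follows by restricting a weight-$\mu$ eigensection to the formal neighbourhood of the isolated point $y_\alpha$ (the restriction is injective since $X$ is irreducible), and then $\Gamma(X,L,H_2)=\bigtriangleup(S_2)$ forces $\operatorname{cone}(\cC)=\tau$. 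Your downgrading to $H_\delta$ and the Bia{\l}ynicki--Birula count to pin $S_0\simeq\P^1$ (two isolated fixed points plus $b_2\geq 1$ forces $\dim S_0=1$) is a clean way to get multiplicity one, and is not in the paper. One small remark: you write the bounding region as $\tau\cap(-\alpha-\tau)$, which is indeed the rhombus with vertices $0,\ \alpha'-\alpha,\ -\alpha,\ \alpha''-\alpha$ and is what is actually used downstream in Proposition~\ref{prop:compassHex}; the ``$\alpha-\tau$'' in the displayed statement appears to be a sign slip (note $-\alpha\in\cC$ but $-\alpha\notin\alpha-\tau$), and your reading is the intended one.
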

\begin{proof} 

It is enough to note that the action of $S_2$ on $(X,L)$ satisfies \cite[Assumptions 5.1]{BWW}, so that the proof follows from \cite[Corollary 5.6]{BWW}; in fact the only nontrivial assumptions to be checked in that list are (4) and (5), and they hold by Lemma \ref{lem:polyroot2} and Remark \ref{rem:isolatedpoints}, respectively.  
\end{proof}

The following lemma, that describes the compass of the $H_2$-action on $X$ at an extremal fixed point, is a generalization of \cite[Lemma 5.15]{BWW}, where the statement has been proved for $n=3,4$. 

\begin{proposition}\label{prop:compassHex} 
Let us consider the action of $H_2$ on $(X,L)$. The compasses at the fixed point components which are associated to  weights different from zero are:
$$\mathcal{C}(y_{i}, X, H_2)= \{\alpha_{i+1}-\alpha_i, \alpha_{i-1}-\alpha_i, -\alpha_i, (\beta_i-\alpha_i)^{n-1},(\beta_{i-1}-\alpha_i)^{n-1} \}$$
$$\mathcal{C}(Y_{\beta_i,k}, X, H_2)= \{(\beta_{i-1}-\beta_i)^{n-2-\dim Y_{\beta_i,k}}, (\beta_{i+1}-\beta_i)^{n-2-\dim Y_{\beta_i,k}},\beta_{i-2}-\beta_i, $$ 
$$\qquad \alpha_{i+1}-\beta_i,\beta_{i+2}-\beta_i,\alpha_{i}-\beta_i,-\beta_i^{\dim Y_{\beta_i,k}+1}\}$$
\end{proposition}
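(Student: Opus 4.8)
The plan is to compute the two compasses $\cC(y_i,X,H_2)$ and $\cC(Y_{\beta_i,k},X,H_2)$ by combining the general structural constraints on the weights of a compatible contact action (Lemmas \ref{rem:symweights} and \ref{lem:sinksourceiso}), the extremal-vertex constraints of Lemma \ref{lem:compass_extremalpoints}, and the bandwidth information on the auxiliary subvarieties $Z_1^i$ and $Z_{0,k}^i$ coming from Lemma \ref{lem:XmuZ} together with the classification Theorems \ref{thm:bw3} and \ref{thm:uniqueBW2}. The underlying geometric picture is that a compass element $\nu$ at a fixed point of weight $m$ points toward the weight of another fixed point (Lemma \ref{lem:compass_prop2}), so every element of $\cC(y_i,X,H_2)$ is of the form $m'-\alpha_i$ with $m'\in\Hex$; the contact symmetry then pairs these weights, and the extremality of $y_i$ restricts $m'$ to lie in the cone $\tau\cap(\alpha_i-\tau)$.

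First I would treat the extremal points $y_i$. By Lemma \ref{lem:compass_extremalpoints}(3) the compass is contained in $\tau\cap(\alpha_i-\tau)\cap\Mo(H_2)$, and inspecting the hexagon (Figure \ref{fig:hexagon2}) this cone, intersected with the admissible weight shifts $m'-\alpha_i$, leaves exactly the five directions $\alpha_{i\pm1}-\alpha_i$, $-\alpha_i$, $\beta_i-\alpha_i$, $\beta_{i-1}-\alpha_i$. Lemma \ref{lem:compass_extremalpoints}(1),(2) pins the multiplicities of the first three to one. The contact structure then forces the remaining count: $y_i$ is extremal so by Lemma \ref{lem:sinksourceiso} the weight $-\alpha_i$ appears with multiplicity $\dim y_i+1=1$ — consistent — and, more importantly, $\dim X=2n+1$ forces the total multiplicity of the compass to be $2n$; since four of the five directions account for $3$ copies, the two remaining directions $\beta_i-\alpha_i$ and $\beta_{i-1}-\alpha_i$ must split the residual $2n-3$ multiplicity. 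The symmetry of Lemma \ref{rem:symweights}, applied to the weights of $F_{|y_i}$, equates their multiplicities, giving $n-1$ each. I would confirm this by downgrading to $H_1^{i+1}$ or $H_1^i$ and identifying $Z_1^i$ as a bandwidth-three variety from Theorem \ref{thm:bw3}, whose sink/source local data then reads off these same multiplicities.

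For the inner components $Y_{\beta_i,k}$ the reasoning is parallel but uses the bandwidth-two subvariety $Z_{0,k}^i$ in which $Y_{\beta_i,k}$ sits as an extremal (Legendrian) component, by Lemma \ref{lem:XmuZ} and Corollary \ref{cor:weightsBW2}. Again every compass element has the form $m'-\beta_i$ with $m'\in\Hex$; the admissible $m'$ neighbouring $\beta_i$ in the hexagon are $\beta_{i\pm1}$, $\beta_{i\pm2}$, $\alpha_i$, $\alpha_{i+1}$, and $-\beta_i=\beta_{i+3}$. Lemma \ref{lem:sinksourceiso} (with $\mu_L(Y_{\beta_i,k})=\beta_i\neq0$) forces the direction $-\beta_i$ to appear with multiplicity $\dim Y_{\beta_i,k}+1$; the contact symmetry of Lemma \ref{rem:symweights} then pairs $\beta_{i\pm1}-\beta_i$ with equal multiplicities, and pairs $\{\alpha_{i+1},\beta_{i+2}\}$ against $\{\beta_{i-2},\alpha_i\}$, forcing the latter four each to appear once. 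Finally the total-dimension count $\dim X=2n+1$ fixes the common multiplicity of $\beta_{i\pm1}-\beta_i$ to be $n-2-\dim Y_{\beta_i,k}$.

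I expect the main obstacle to be justifying that the four ``diagonal'' directions $\beta_{i\pm2}-\beta_i$, $\alpha_i-\beta_i$, $\alpha_{i+1}-\beta_i$ each occur with multiplicity exactly one, and that no further hexagon weights (e.g.\ $\alpha_{i-1}-\beta_i$ or $0-\beta_i$ beyond the accounted copies) enter. This requires carefully combining the compass-downgrading formula of Lemma \ref{lem:compass_prop}(ii) — which splits $\cC(Y_{\beta_i,k},X,H_2)$ according to membership in $\ker(\pi_i^*)$ — with the bandwidth-two local structure at the extremal Legendrian component supplied by Corollary \ref{cor:weightsBW2} and Proposition \ref{prop:Atiyah}, so that the normal-bundle splitting is read off unambiguously. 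The contact symmetry (Lemma \ref{rem:symweights}) is the decisive tool that converts the dimension bookkeeping into exact equalities of multiplicities rather than mere inequalities.
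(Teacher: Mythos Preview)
Your treatment of the extremal points $y_i$ follows the paper's argument (Lemma~\ref{lem:compass_extremalpoints} plus the symmetry of Lemma~\ref{rem:symweights} and a dimension count), modulo an arithmetic slip: since $y_i$ is a point, the compass has $\dim X=2n+1$ elements, not $2n$; the residual after the three multiplicity-one directions is $2n-2$, which symmetry splits as $(n-1)+(n-1)$. The appeal to Theorem~\ref{thm:bw3} here is superfluous and potentially dangerous: Proposition~\ref{prop:Xmu}, where Theorem~\ref{thm:bw3} is actually applied to $Z_1^i$, \emph{uses} the present proposition in its proof, so any substantive reliance on the classification at this stage would be circular.

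For the inner components $Y_{\beta_i,k}$ there is a genuine gap. You correctly isolate the obstacle---pinning the multiplicity of the four diagonal directions $\alpha_i-\beta_i$, $\alpha_{i+1}-\beta_i$, $\beta_{i\pm 2}-\beta_i$ to one---but the tool you propose (the bandwidth-two structure of $Z_{0,k}^i$ via Corollary~\ref{cor:weightsBW2} and Proposition~\ref{prop:Atiyah}) only controls the part of the compass lying in $\ker\pi_i^*$, i.e.\ the $\pm\beta_i$ direction; it says nothing about the diagonals. The contact symmetry alone merely equates multiplicities in pairs, it does not force the value $1$. The paper's route is different and avoids any classification input: it downgrades through the bandwidth-\emph{three} variety $Z_1^{i+1}$, which contains both $Y_{\beta_i,k}$ and the extremal point $y_{i+1}$. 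By Lemma~\ref{lem:compass_prop}(i), the projection of $\cC(Y_{\beta_i,k},X,H_2)$ to $\Mo(H_1^{i+1})$ equals $\cC(Z_1^{i+1},X,H_1^{i+1})$, and the latter is \emph{also} the projection of the already-computed compass $\cC(y_{i+1},X,H_2)$. That projected compass contains exactly one element of value $+1$; Lemma~\ref{lem:compass_prop2} then identifies its lift in $\cC(Y_{\beta_i,k},X,H_2)$ as $\alpha_i-\beta_i$ with multiplicity one. A second choice of $Z_1^j$ handles $\alpha_{i+1}-\beta_i$, symmetry then gives $\beta_{i\pm 2}-\beta_i$, and the rest follows by the count you outline. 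The missing ingredient is therefore not the Legendrian normal-bundle description but the transfer of compass data from $y_{i+1}$ to $Y_{\beta_i,k}$ through their common $H_1^j$-fixed component.
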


\begin{figure}[h!]
\includegraphics[height=5cm]{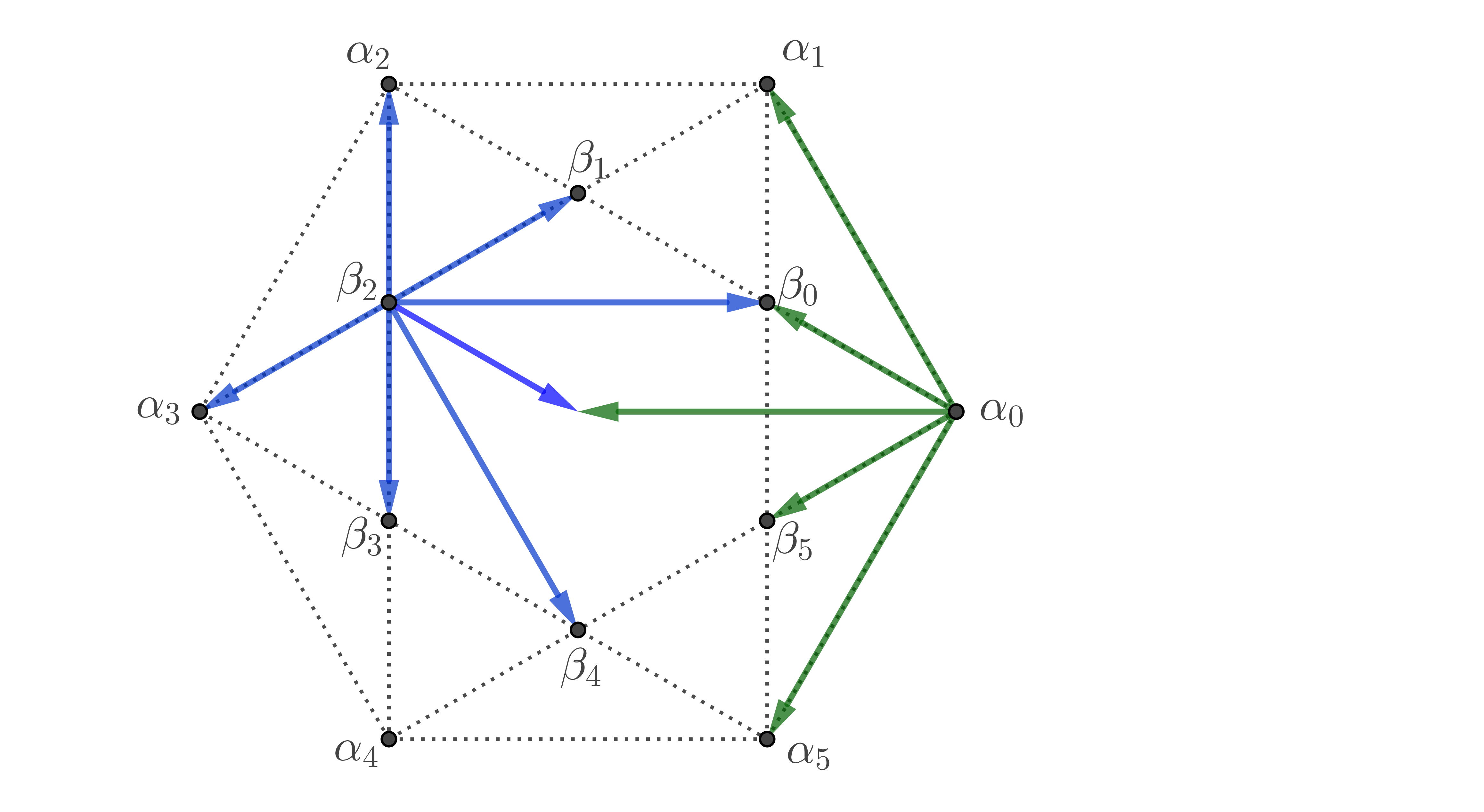}
\caption{Compasses at fixed components $y_{0}$ and $Y_{\beta_2}$.}
\label{fig:Compass1}
\end{figure}

\begin{proof}
By Lemma \ref{lem:compass_extremalpoints} (1,2) the elements $\alpha_{i+1}-\alpha_{i},\alpha_{i-1}-\alpha_{i},-\alpha_{i}$ belong to the compass $\mathcal{C}(y_{i}, X, H_2)$ with multiplicity one. Using item (3) of the same Lemma, $\beta_{i}-\alpha_{i}$, $\beta_{i-1}-\alpha_{i}$ are the only other possible elements in the compass, and by Lemma \ref{rem:symweights} they occur with equal multiplicity. The extremal fixed components are points (Remark \ref{rem:isolatedpoints}), hence the number of elements in $\mathcal{C}(y_{i}, X, H_2)$ is equal to  $\dim X$. The multiplicity of $\beta_{i}-\alpha_{i}$, $\beta_{i-1}-\alpha_{i}$  in the compass must then be equal to $n-1$ and the first claim follows. 

We now describe the compass at a component $Y_{\beta_i,k}$. To this end, consider the one dimensional subtorus $H_1^i$. Taking the  component $Z_1^{i+1}$ containing it (see Lemma \ref{lem:defXmu}), and applying Lemma \ref{lem:compass_prop} we get: 
$$\pi_i^*(\mathcal{C}(Y_{\beta_i,k},X,H_2))=\mathcal{C}(Z_1^{i+1},X,H_1^i)=\pi_i^*(\mathcal{C}(y_{i+1},X,H_2))=\{-2,{-1}^{n},1\}$$ 
where the last equality is gotten using the first part of this statement. By Lemma \ref{lem:compass_prop2}, the only element of $\mathcal{C}(Y_{\beta_i,k},X,H_2)$ that can be projected to $1$ is $\alpha_{i}-\beta_{i}$, then this element has multiplicity one in $\mathcal{C}(Y_{\beta_i,k},X,H_2)$. The same method with a different choice of the one dimensional subtorus shows that also $\alpha_{i+1}-\beta_{i}$  occurs with multiplicity one. Using the symmetry given by Lemma \ref{rem:symweights} we obtain that $\beta_{i+2}-\beta_{i}$, $\beta_{i-2}-\beta_{i}$ have multiplicity one as well. Moreover, applying Lemma \ref{lem:sinksourceiso} we know that the element $-\beta_i$ occurs with multiplicity $\dim Y_{\beta_i,k}+1$. 

Finally, since the number of elements of $\mathcal{C}(Y_{\beta_i,k},X, H_2)$, counted with  multiplicity, must be equal to $\dim{X}-\dim{Y_{\beta_i,k}}$, and using again Lemma \ref{rem:symweights}, it follows that the elements $\beta_{i-1}-\beta_{i}$ and $\beta_{i+1}-\beta_{i}$ have both multiplicity equal to $n-2-\dim{Y_{\beta_i,k}}$. 
\end{proof}

Now we determine the fixed point components of the $H_2$ action on $(X,L)$, by analyzing first  in detail the bandwidth three varieties $Z_1^i$ (see Lemma \ref{lem:defXmu}).

\begin{proposition}\label{prop:Xmu}
Let $Y_{\beta_{i-1}}\subset Z_1^i$ be as in Notation \ref{not:Z} and \ref{not:fixedH2}. Then $\dim Z_1^i = n-1$, the pair $(Z_1^i,Y_{\beta_{i-1}})$ is one of the following:
\begin{enumerate}
\item $(\P^1 \times \DB_{(n-1)/2}(1), ~ \star \sqcup \DB_{(n-3)/2}(1))$,
\item $(\P^1 \times \DD_{n/2}(1), ~ \star \sqcup \DD_{(n-2)/2}(1))$,
\item $(\DC_3(3), \DA_2(2))$,
\item $(\DA_5(3), \DA_2(2) \times  \DA_2(1))$,
\item $(\DD_6(6), \DA_5(4))$,
\item $(\DE_7(7),\DE_6(6))$,  
\end{enumerate}
and the restriction of $L$ to every positive dimensional component $Y_{\beta_{i-1},k}\subseteq Y_{\beta_{i-1}}$ is the ample generator of $\Pic(Y_{\beta_{i+1},k})$, except in case (3), where $L_{|Y_{\beta_{i-1},k}} \simeq \cO_{\P^2}(2)$.
\end{proposition}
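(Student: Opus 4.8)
The plan is to recognize $(Z_1^i,L_{|Z_1^i})$ by applying the classification of equalized bandwidth-three actions, Theorem \ref{thm:bw3}, to the $\C^*$-action induced by the quotient torus $H_2/H^i_1$ on the irreducible component $Z_1^i$ (Lemma \ref{lem:defXmu}).

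First I would assemble the local data of this action. By Lemma \ref{lem:XmuZ} it has bandwidth three, and by Remark \ref{rem:isolatedpoints} its sink and source are the isolated points $y_{i-2}$ and $y_i$. To get the compass at the source I would intersect the compass $\cC(y_i,X,H_2)$ computed in Proposition \ref{prop:compassHex} with $\ker\pi_i^*=\langle\beta_i\rangle$, as dictated by Lemma \ref{lem:compass_prop}(ii). Among the targets $\alpha_{i+1},\alpha_{i-1},0,\beta_i,\beta_{i-1}$ pointed to by the elements of $\cC(y_i,X,H_2)$ (Lemma \ref{lem:compass_prop2}), only $\beta_{i-1}$ lies in the slice $Z_1^i$, so $\beta_{i-1}-\alpha_i$ is the unique compass element in $\ker\pi_i^*$; hence
$$\cC(y_i,Z_1^i,H_2/H^i_1)=\{(\beta_{i-1}-\alpha_i)^{\,n-1}\},$$
and symmetrically at $y_{i-2}$. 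As $y_i$ is a point this already gives $\dim Z_1^i=n-1$, and as the compass is a single weight direction the action is equalized at the sink and the source.

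Since $\dim Z_1^i=n-1\geq 3$ (recall $n\geq 5$), Theorem \ref{thm:bw3}, applied to the $(n-1)$-dimensional manifold $Z_1^i$, leaves exactly its three families. Cases (2) and (3) reproduce the list of the Proposition: case (2) gives $Z_1^i\simeq\P^1\times\Q^{n-2}$ with $\Q^{n-2}\simeq\DB_{(n-1)/2}(1)$ or $\DD_{n/2}(1)$ according to the parity of $n$, and inner component $\star\sqcup\Q^{n-4}$, which are items (1)--(2); case (3) gives the four homogeneous varieties of items (3)--(6). The assertion on $L_{|Y_{\beta_{i-1},k}}$ is precisely the statement about $L_{|Y_i}$ in Theorem \ref{thm:bw3}, including the exceptional value $\cO_{\P^2}(2)$.

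The crux is to discard case (1) of Theorem \ref{thm:bw3}, namely the projective bundles $\P(\cV^\vee)$ over $\P^1$, whose inner component would be a linearly embedded $\P^{n-3}$ carrying $L_{|Y}\simeq\cO_{\P^{n-3}}(1)$. Here I would switch to the contact geometry: applying Lemma \ref{lem:XmuZ} with $i-1$ in place of $i$, every positive-dimensional $Y_{\beta_{i-1},k}$ is an extremal fixed component of a compatible bandwidth-two action on $Z^{i-1}_{0,k}$, which is contact Fano by Corollary \ref{cor:Y0contact}; thus by Corollary \ref{cor:weightsBW2} it is a \emph{Legendrian} subvariety of $Z^{i-1}_{0,k}$, of dimension $(\dim Z^{i-1}_{0,k}-1)/2$. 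The point is then that a Legendrian projective space cannot be polarized by $\cO(1)$: by Proposition \ref{prop:Atiyah} its normal bundle is the Atiyah extension of $L_{|Y}$, and the contact structure forces $Z^{i-1}_{0,k}\simeq\P^{2(n-3)+1}$, whose contact line bundle restricts to $\cO_{\P^{n-3}}(2)$ by Remark \ref{rem:contactfano}, against $L_{|Y}\simeq\cO_{\P^{n-3}}(1)$. This excludes case (1) and leaves exactly the six possibilities. I expect this last step to be the delicate one, as it is the only place where the contact structure of $X$, rather than the torus combinatorics, is genuinely used.
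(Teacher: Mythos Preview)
Your argument up to and including the application of Theorem~\ref{thm:bw3} is correct and matches the paper's proof: you use Proposition~\ref{prop:compassHex} and Lemma~\ref{lem:compass_prop}(ii) to show that the induced $H_2/H_1^i$-action on $Z_1^i$ has isolated sink and source, is equalized there, and that $\dim Z_1^i=n-1$, so that Theorem~\ref{thm:bw3} applies. (The paper in fact checks equalization at \emph{all} fixed components, but since Theorem~\ref{thm:bw3} only requires it at sink and source, your version suffices.)

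The divergence is in the exclusion of case~(1) of Theorem~\ref{thm:bw3}. The paper dispatches this by invoking \cite[Corollary~6.7]{RW}. Your proposed self-contained argument has a genuine gap at the sentence ``the contact structure forces $Z^{i-1}_{0,k}\simeq\P^{2(n-3)+1}$''. Write $m=n-3$. You correctly observe (via Corollary~\ref{cor:Y0contact} and Corollary~\ref{cor:weightsBW2}) that $Y=\P^{m}$ would be Legendrian in the contact Fano manifold $Z:=Z^{i-1}_{0,k}$ of dimension $2m+1$, and Proposition~\ref{prop:Atiyah} then identifies $\cN_{Y|Z}$ with the nontrivial extension of $\cO_{\P^m}(1)$ by $\Omega_{\P^m}(1)$, i.e.\ with $\cO_{\P^m}^{\,m+1}$ by the Euler sequence. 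But none of the ingredients you cite---Proposition~\ref{prop:Atiyah}, Remark~\ref{rem:contactfano}, or the triviality of $\cN_{Y|Z}$---imply $Z\simeq\P^{2m+1}$. Remark~\ref{rem:contactfano} leaves three possibilities for a contact Fano $Z$: the projective space (where your contradiction $L_{|Y}=\cO(2)\neq\cO(1)$ would indeed kick in), the variety $\P(\Omega_{\P^{m+1}})$, and the case $\Pic Z=\Z L_{|Z}$. You have not excluded the last two; for instance, in $\P(\Omega_{\P^{m+1}})$ the fibers over points of $\P^{m+1}$ are Legendrian $\P^m$'s on which the contact line bundle restricts to $\cO(1)$, so this configuration is not a priori impossible. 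Thus the step ``forces $Z\simeq\P^{2m+1}$'' is unjustified, and the exclusion of case~(1) is incomplete as written.
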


\begin{proof}
As observed in Lemma \ref{lem:XmuZ}, we have a bandwidth three action of $H_2/H_1^i\simeq \C^{*}$ on $Z_1^i$; we claim first that this action is equalized. In fact, given any fixed point component $Y\subset Z_1^i$, one has $\mathcal{C}(Y, Z_1^i, H_2/H_1^i)= \mathcal{C}(Y, X, H_2)\cap \ker{\pi^*_i}$ by Lemma \ref{lem:compass_prop}. Using the description of $\mathcal{C}(Y, X, H_2)$ given in  Proposition \ref{prop:compassHex} for every fixed component $Y\subset Z_1^i$, this shows that the only possible elements of $\mathcal{C}(Y, Z_1^i, H_2/H_1^i)$ are $\pm 1$. Note also that $Z_1^i$ has isolated extremal fixed points $y_{i}$, $y_{i-2}$, hence $\dim Z_1^i$ is equal to the number of elements of $\mathcal{C}(y_i, X, H_2)\cap \ker{\pi^*_i}$, which is $n-1$ by the same Proposition.

Since for the proof of Theorem \ref{thm:LB_isolated_pts} we have assumed that $n\geq 5$ (see Remark \ref{rem:red_dim}), we may apply Theorem \ref{thm:bw3} to $(Z_1^i,L_{|Z_1^i})$ to obtain the description of the fixed point components $Y_{\beta_{i-1},k}$ and of $L_{|Y_{\beta_{i-1},k}}$.  
The fact that $Z_1^i$ cannot be of type (1) in Theorem \ref{thm:bw3} when $\dim Z_1^i \geq 3$ 
has been proved in \cite[Corollary~6.7]{RW}.
\end{proof}

\begin{remark}\label{rem:allequal}
Theorem \ref{thm:bw3} tells us also that $Y_{\beta_{i-1}}\simeq Y_{\beta_{i-2}}$, for every $i$. Then, since two consecutive components $Z^i_1$, $Z^{i+1}_1$ contain $Y_{\beta_{i-1}}$ (cf. Notation \ref{not:fixedH2}) and the isomorphism class of $Y_{\beta_{i-1}}$ determines $Z^i_1$ and $Z^{i+1}_1$, it follows that the varieties $Z^i_1$ are isomorphic for all $i$, and the same holds for the varieties $Y_{\beta_i}$. Note also that the list of pairs $(Z_1^i,Y_{\beta_{i-1}})$ of Proposition \ref{prop:Xmu} coincides with the list of pairs $(Z_1^i,Y_{\beta_{i-1}})$ obtained for adjoint varieties (see Table \ref{tab:Freudenthal}).
\end{remark}

\begin{proposition}\label{prop:Z} 
Let $Y_{\beta_i,k}\subset Z_{0,k}^i$, $Y_0$ be as in  Notations \ref{not:Z} and \ref{not:fixedH2}. Then  the triple $(Z_{0,k}^i,Y_{\beta_i,k},Z_{0,k}^i \cap Y_{0})$ is one of the following:
\begin{itemize}
\item[(0)] $(\P^1, \star,\emptyset)$;
\item[(1)] $(\DB_{(n-1)/2}(2), \DB_{(n-3)/2}(1),\DB_{(n-3)/2}(2))$;
\item[(2)] $(\DD_{n/2}(2),  \DD_{(n-2)/2}(1),\DD_{(n-2)/2}(2))$;
\item[(3)]  $(\DC_3(1),\DA_2(2),\emptyset)$;
\item[(4)]  $(\DA_5(1,5), \DA_2(2) \times  \DA_2(1), \DA_2(1,2) \sqcup  \DA_2(1,2))$; 
\item[(5)]  $(\DD_6(2), \DA_5(4),\DA_5(1,5))$;
\item[(6)]  $(\DE_7(1),\DE_6(6),\DE_6(2))$.
\end{itemize}
Moreover, every positive dimensional component $Y_{0,r}\subseteq Z_{0,k}^i \cap Y_{0}$ is a contact manifold with contact line bundle $L_{|Y_{0,r}}$. 
\end{proposition}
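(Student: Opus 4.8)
The guiding idea is that each $Z_{0,k}^i$ is itself a contact Fano manifold on which $H_2/H^i_1\simeq\C^*$ acts with bandwidth two, and that it can be identified from its Legendrian extremal component. First I would note that $Z_{0,k}^i$, being a weight-zero fixed component of the $H^i_1$-action on the contact Fano manifold $(X,L)$, is a contact Fano manifold with contact line bundle $L_{|Z_{0,k}^i}$ by Corollary \ref{cor:Y0contact}. By Lemma \ref{lem:XmuZ} the induced $H_2/H^i_1$-action has bandwidth two, with $Y_{\beta_i,k}$ as an extremal component; intersecting the compass of Proposition \ref{prop:compassHex} with $\ker\pi_i^*$ via Lemma \ref{lem:compass_prop} leaves only the direction $-\beta_i$, with multiplicity $\dim Y_{\beta_i,k}+1$. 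Hence the action is equalized at its extremes and $\dim Z_{0,k}^i=2\dim Y_{\beta_i,k}+1$, and by Corollary \ref{cor:weightsBW2} the component $Y_{\beta_i,k}$ is a Legendrian subvariety of $Z_{0,k}^i$.

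The crucial point is that the local data of the action at this Legendrian component is already determined. Indeed, the isomorphism class of $Y_{\beta_i,k}$ and of $L_{|Y_{\beta_i,k}}$ is known from Proposition \ref{prop:Xmu} and Remark \ref{rem:allequal}, while Proposition \ref{prop:Atiyah} shows that $\cN_{Y_{\beta_i,k}/Z_{0,k}^i}$ is the canonical Atiyah extension attached to $(Y_{\beta_i,k},L_{|Y_{\beta_i,k}})$. The same pair, with the same Atiyah normal bundle, occurs as a Legendrian piece (column $Z_0$ in Table \ref{tab:Freudenthal}) of the corresponding adjoint variety. When $\dim Y_{\beta_i,k}\geq 1$, $\rho(Z_{0,k}^i)=1$, and an inner fixed component is present, Theorem \ref{thm:uniqueBW2} then forces $Z_{0,k}^i$ to be isomorphic to that adjoint piece, giving cases (1), (2), (5), (6); the semiampleness required by that theorem depends only on $(Y_{\beta_i,k},L_{|Y_{\beta_i,k}})$ and is therefore inherited from the adjoint model.

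The degenerate configurations I would treat separately. If $Y_{\beta_i,k}$ is a point, then $\dim Z_{0,k}^i=1$ and $Z_{0,k}^i\simeq\P^1$, which is case (0) with empty inner locus. If $\rho(Z_{0,k}^i)\geq 2$, then Remark \ref{rem:contactfano} identifies $Z_{0,k}^i$ with $\P(\Omega_{\P^{m+1}})$, and the equation $\dim Z_{0,k}^i=2\dim Y_{\beta_i,k}+1$ forces $m=\dim Y_{\beta_i,k}$, yielding case (4). The step I expect to be the main obstacle is case (3), where $Y_{\beta_i,k}\simeq\P^2$ (with $L_{|\P^2}\simeq\cO_{\P^2}(2)$) is Legendrian but there is \emph{no} inner fixed component, so Theorem \ref{thm:uniqueBW2} does not apply; the same difficulty is the verification, in cases (1), (2), (5), (6), that an inner component does exist. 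Since a $\P^2$-Legendrian forces $\dim Z_{0,k}^i=5$, I would resolve case (3) by appealing to the validity of the statement in low dimension (Remark \ref{rem:red_dim}), which makes $Z_{0,k}^i$ homogeneous and hence equal to $\DC_3(1)$.

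Finally, the \emph{moreover} assertion is a direct consequence of Corollary \ref{cor:Y0contact} applied to the compatible $H_2/H^i_1$-action on the contact manifold $Z_{0,k}^i$: every positive-dimensional component $Y_{0,r}\subseteq Z_{0,k}^i\cap Y_0$ is a weight-zero fixed component, hence a contact manifold with contact line bundle $L_{|Y_{0,r}}$. The precise identifications of the inner loci listed in each case then follow by matching the $H_2/H^i_1$-fixed locus of $Z_{0,k}^i$ with that of the corresponding adjoint model.
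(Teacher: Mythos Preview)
Your overall strategy matches the paper's: recognize $Z_{0,k}^i$ as a bandwidth-two contact Fano manifold, pin down the Legendrian extremal component $Y_{\beta_i,k}$ and its Atiyah normal bundle via Propositions \ref{prop:Xmu} and \ref{prop:Atiyah}, and then invoke Theorem \ref{thm:uniqueBW2}. Cases (0) and (4) are also treated as in the paper, though for (4) you should make explicit \emph{why} $\rho(Z_{0,k}^i)\geq 2$ occurs precisely when $Y_{\beta_i,k}\simeq\DA_2(2)\times\DA_2(1)$: the paper deduces this from \cite[Lemma~2.9(1)]{ORSW}, which controls $\Pic(Z_{0,k}^i)$ in terms of $\Pic(Y_{\beta_i,k})$.

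Where your argument genuinely breaks is case (3). Remark \ref{rem:red_dim} refers to the ambient theorem for $X$ under the standing hypotheses of Theorem \ref{thm:LB_isolated_pts}---in particular reductivity of $\CAut(X)^\circ$---and is \emph{not} an unconditional classification of contact Fano $5$-folds. You have no reason to know that $\CAut(Z_{0,k}^i)^\circ$ is reductive, and moreover $L_{|Z_{0,k}^i}$ visibly fails to generate $\Pic(Z_{0,k}^i)$, since it restricts to $\cO_{\P^2}(2)$ on the Legendrian $\P^2$; so even the hypothesis $\Pic=\Z L$ underlying that remark is not met. The paper's route here is direct and avoids any classification input: from $L_{|\P^2}\simeq\cO_{\P^2}(2)$ one gets via \cite[Lemma~2.9(i)]{ORSW} that $L_{|Z_{0,k}^i}$ is twice an ample generator, so the Fano index of $Z_{0,k}^i$ equals $6=\dim Z_{0,k}^i+1$, and Kobayashi--Ochiai gives $Z_{0,k}^i\simeq\P^5=\DC_3(1)$. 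This also settles the inner-component question for (3), since the $\C^*$-action on $\P^5$ with two disjoint fixed $\P^2$'s has no further fixed locus.
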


\begin{proof} Note that, as observed in Lemma \ref{lem:XmuZ} the action of $H_2/H_1^i\simeq \C^*$ on  $(Z_{0,k}^i,L_{|Z_{0,k}^i})$ has bandwidth two. By Proposition \ref{prop:compassHex} and Lemma \ref{lem:compass_prop} (ii) this action is equalized at the sink and the source, hence it is equalized by \cite[Lemma 5.8]{ORSW}. Moreover, by Corollary \ref{cor:Y0contact}, $Z_{0,k}^i$ and subsequently every positive dimensional irreducible component of $Z_{0,k}^i \cap Y_{0}$ is a contact manifold whose contact line bundle is the restriction of $L$.

By Remark \ref{rem:allequal} and Corollary \ref{cor:weightsBW2}  the extremal components $Y_{\beta_i,k}, Y_{\beta_{i+3},k}$ of the $H_2/H_1^i$ action on $Z_{0,k}^i$ are isomorphic to each other, and  isomorphic to one of the connected components $Y$ of the varieties $Y_{\beta_i}$ appearing in Proposition \ref{prop:Xmu}. Moreover, again by Corollary \ref{cor:weightsBW2}, we get $\dim Z_{0,k}^i = 2 \dim Y+1$.

When $Y$ is a point one has  $\dim Z_{0,k}^i=1$, and we obviously have $Z_{0,k}^i \simeq \P^1$. If  $Y \simeq \DA_2(2)$ we have  $\dim Z_{0,k}^i=5$, and $L_{|\DA_2(2)}\simeq \cO_{\P^2}(2)$ by Proposition \ref{prop:Xmu}; using \cite[Lemma~2.9~(i)]{ORSW}, $L$ is then the second power of the ample generator of $\Pic(Z_{0,k}^i)$, therefore the index of $Z_{0,k}^i$ is six and so $Z_{0,k}^i  \simeq \P^5$. Note that a  $\C^*$-action on $\P^5$ with two fixed disjoint $\P^2$'s does not have other fixed point components. If  $Y \simeq  \DA_2(2) \times  \DA_2(1)$ then $\dim Z_{0,k}^i=9$, and the Picard number of $Z_{0,k}^i$ must be larger than one (see \cite[Lemma~2.9(1)]{ORSW}). Then we conclude that $Z_{0,k}^i\simeq \DA_5(1,5)$ (see Remark \ref{rem:contactfano}). In this case the bandwidth two $\C^*$-action has been described in \cite[Example 5.18]{ORSW}, where it is shown that the central component is  $\DA_2(1,2) \sqcup  \DA_2(1,2)$.

In all the other cases, since the normal bundles $\cN_{Y_{\beta_i,k}|Z^i_{0,k}}$, $\cN_{Y_{\beta_{i+3},k}|Z^i_{0,k}}$ are uniquely determined by $Y$ and $L$, and their duals twisted with $L$ are globally generated (Proposition \ref{prop:Atiyah}), we can use Theorem \ref{thm:uniqueBW2} to conclude that the variety $Z^i_{0,k}$ is also determined by these data. In particular all the $Z^i_{0,k}$ are isomorphic to the corresponding $H_2$-fixed components obtained in the case in which $X$ is an adjoint variety  (see Table \ref{tab:Freudenthal}), so we obtain the list of triples in the statement.
\end{proof}

\begin{remark}\label{rem:allequal2}
Note that the case (0) of Proposition \ref{prop:Z} appears when $X$ contains fixed components of the types (1,2) of the list of Proposition \ref{prop:Xmu}. In any case, the isomorphism class of the components $Z_{1}^i$ contained in $X$ determines the possible classes of the components $Z_{0,k}^i$ containing $Y_{\beta_i,k}$.
\end{remark}

\begin{proposition} \label{prop:compassY_0} Let  $Y_{0,k}\subset X^{H_2}$ be as in Notation \ref{not:fixedH2}. Then there exist $H_1^i$-fixed components $Z_{0,k}^i$, $i=1,2,3$, containing $Y_{0,k}$ as an $H_2/H_1^i$-fixed component of weight zero, and 
$$\mathcal{C}(Y_{0,k},X,H_2)=\big\{\beta_i^{(2n+1-\dim Y_{0,k})/6}, \ \ i=0,\dots,5 \big\}.$$ 
\end{proposition}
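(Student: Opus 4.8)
The plan is to compute $\cC(Y_{0,k},X,H_2)$ by splitting it along the three lines through the origin spanned by the antipodal pairs $\{\beta_i,\beta_{i+3}\}$, which are exactly the kernels of the projections $\pi_i^*$ for $i=1,2,3$. First I would observe that, since $Y_{0,k}$ has $H_2$-weight zero, it has $H^i_1$-weight zero for every $i$, so it lies in some weight-zero component of $X^{H^i_1}$; this component cannot be of type $(0)$ in Proposition \ref{prop:Z} (those meet $Y_0$ in the empty set), hence it is one of the positive-dimensional $Z^i_{0,k}$ and $Y_{0,k}$ is an irreducible piece of its central fibre $Z^i_{0,k}\cap Y_0$. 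This establishes the existence claim for $i=1,2,3$. By Lemma \ref{lem:XmuZ} the torus $H_2/H^i_1\simeq\C^*$ acts on $(Z^i_{0,k},L_{|Z^i_{0,k}})$ with bandwidth two, and by Proposition \ref{prop:Z} this action is equalized; moreover by Lemma \ref{lem:legendrian2} the central component $Y_{0,k}$ satisfies $\nu^+(Y_{0,k})=\nu^-(Y_{0,k})$.

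Next I would extract the $\beta_i$-part of the compass. By Lemma \ref{lem:compass_prop}(ii),
$$
\cC(Y_{0,k},Z^i_{0,k},H_2/H^i_1)=\cC(Y_{0,k},X,H_2)\cap\ker\pi_i^*,
$$
and $\ker\pi_i^*\cap\Mo(H_2)=\Z\beta_i$ because $\beta_i$ is primitive. Since the bandwidth-two action is equalized \emph{at every} fixed component, the normal weights of $Y_{0,k}$ in $Z^i_{0,k}$ are $\pm 1$ in the identification $\Mo(H_2/H^i_1)\simeq\Z$ sending $\beta_i\mapsto 1$; hence the elements of $\cC(Y_{0,k},X,H_2)$ on this line are \emph{exactly} $\beta_i$ and $\beta_{i+3}=-\beta_i$, each with multiplicity $m_i:=\nu^+(Y_{0,k})$, so that $2m_i=\dim Z^i_{0,k}-\dim Y_{0,k}$. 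By Remark \ref{rem:allequal2} all the $Z^i_{0,k}$ are of the same isomorphism type, so $\dim Z^i_{0,k}$ is independent of $i$; together with the symmetry $m_i=m_{i+3}$ coming from Lemma \ref{rem:symweights} this gives $m_0=\dots=m_5=:m$. Thus the portion of $\cC(Y_{0,k},X,H_2)$ supported on the three $\beta$-lines is precisely $\{\beta_i^{\,m}\mid i=0,\dots,5\}$, with $6m=3(\dim Z^i_{0,k}-\dim Y_{0,k})$ elements.

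It remains to show there is nothing else, which is the main obstacle. By Lemma \ref{lem:compass_prop2} every element of $\cC(Y_{0,k},X,H_2)$ is a positive multiple of some weight of a fixed component, hence (as $\mu_L(Y_{0,k})=0$) a positive ray toward a vertex $\alpha_j$ or an inner point $\beta_j$ of $\bigtriangleup(S_2)$; the $\beta$-rays are accounted for above, so the only possible extra elements point along the $\alpha_j$-rays. The hard point is to exclude these. The clean way I would close the argument is a dimension count: the total number of compass elements equals the codimension $\dim X-\dim Y_{0,k}=2n+1-\dim Y_{0,k}$, while the $\beta$-contribution already has $3(\dim Z^i_{0,k}-\dim Y_{0,k})$ elements. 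I therefore need the identity
$$
3\,\dim Z^i_{0,k}=2n+1+2\,\dim Y_{0,k},
$$
which I expect to verify directly on the finite list of triples $(Z^i_{0,k},\,Y_{\beta_i,k},\,Z^i_{0,k}\cap Y_0)$ of Proposition \ref{prop:Z} (using $\dim Z^i_{0,k}=2\dim Y_{\beta_i,k}+1$ from Corollary \ref{cor:weightsBW2} and the explicit dimensions of the adjoint-type varieties involved; e.g.\ in type $\DB_r$ one has $n=2r-3$, $\dim Z^i_{0,k}=4r-13$, $\dim Y_{0,k}=4r-17$, and both sides equal $12r-39$). This identity forces $3(\dim Z^i_{0,k}-\dim Y_{0,k})=2n+1-\dim Y_{0,k}$, so the $\beta$-directions already exhaust the full codimension, leaving no room for any $\alpha_j$-direction. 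Consequently $\cC(Y_{0,k},X,H_2)=\{\beta_i^{\,m}\mid i=0,\dots,5\}$ with $6m=2n+1-\dim Y_{0,k}$, i.e.\ $m=(2n+1-\dim Y_{0,k})/6$, as claimed. The only real subtlety is the equalization being available at the non-extremal component $Y_{0,k}$ (so that the weights are genuinely $\pm\beta_i$ and not higher multiples), which is exactly what Proposition \ref{prop:Z} guarantees, and the bookkeeping in the dimension identity; everything else is a routine application of the downgrading formulae.
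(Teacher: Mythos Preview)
Your argument has a genuine gap at its very first step: you assert that the $H_1^i$-fixed, weight-zero component containing $Y_{0,k}$ is one of the $Z^i_{0,k}$ classified in Proposition~\ref{prop:Z}, and then reason from the list. But Proposition~\ref{prop:Z} only treats those components $Z^i_{0,k}$ that \emph{contain some} $Y_{\beta_i,k}$; as the paper explicitly warns in Notation~\ref{not:Z}, it is not a priori true that every weight-zero $H_1^i$-fixed component contains an $H_2$-fixed component of weight $\beta_i$. So your dichotomy ``type~(0) or one of the positive-dimensional $Z^i_{0,k}$'' is not exhaustive. Equivalently, you have not ruled out the possibility that, for some $i$, the component $Z$ containing $Y_{0,k}$ equals $Y_{0,k}$ itself (trivial $H_2/H_1^i$-action), or more generally does not reach the weights $\pm\beta_i$ --- in which case Proposition~\ref{prop:Z}, Lemma~\ref{lem:XmuZ}, and the equalization claim are all unavailable, and your dimension count collapses.

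The paper closes exactly this gap by proceeding in the opposite order. It first \emph{independently} excludes the $\alpha$-directions: assuming some $\lambda\alpha_j$ lies in $\cC(Y_{0,k},X,H_2)$, it downgrades along the subtorus killing $\alpha_j$, obtains a contact Fano component $Z'$ with isolated Legendrian extremal points $y_j,y_{j+3}$, and concludes $Z'\cong\P^1$, contradicting $Y_{0,k}\neq\emptyset$. Only then, using that the (nonempty) compass must contain some $\lambda_1\beta_1$, it invokes Lemma~\ref{lem:compass_prop2} to produce an $H_2$-fixed component of weight $\beta_1$ inside the $H_1^1$-fixed component through $Y_{0,k}$; this is what legitimises applying Proposition~\ref{prop:Z}. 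After that the two arguments converge: both compute the multiplicity of each $\beta_i$ from the explicit list and finish with the same codimension count. Your route would become valid if you inserted, for each $i$, a justification that the $H_1^i$-component through $Y_{0,k}$ actually contains a $Y_{\beta_i,k}$ --- but that is precisely the nontrivial content you are skipping, and the paper's $\alpha$-exclusion is the mechanism that supplies it.
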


\begin{figure}[h!]
\includegraphics[height=5cm]{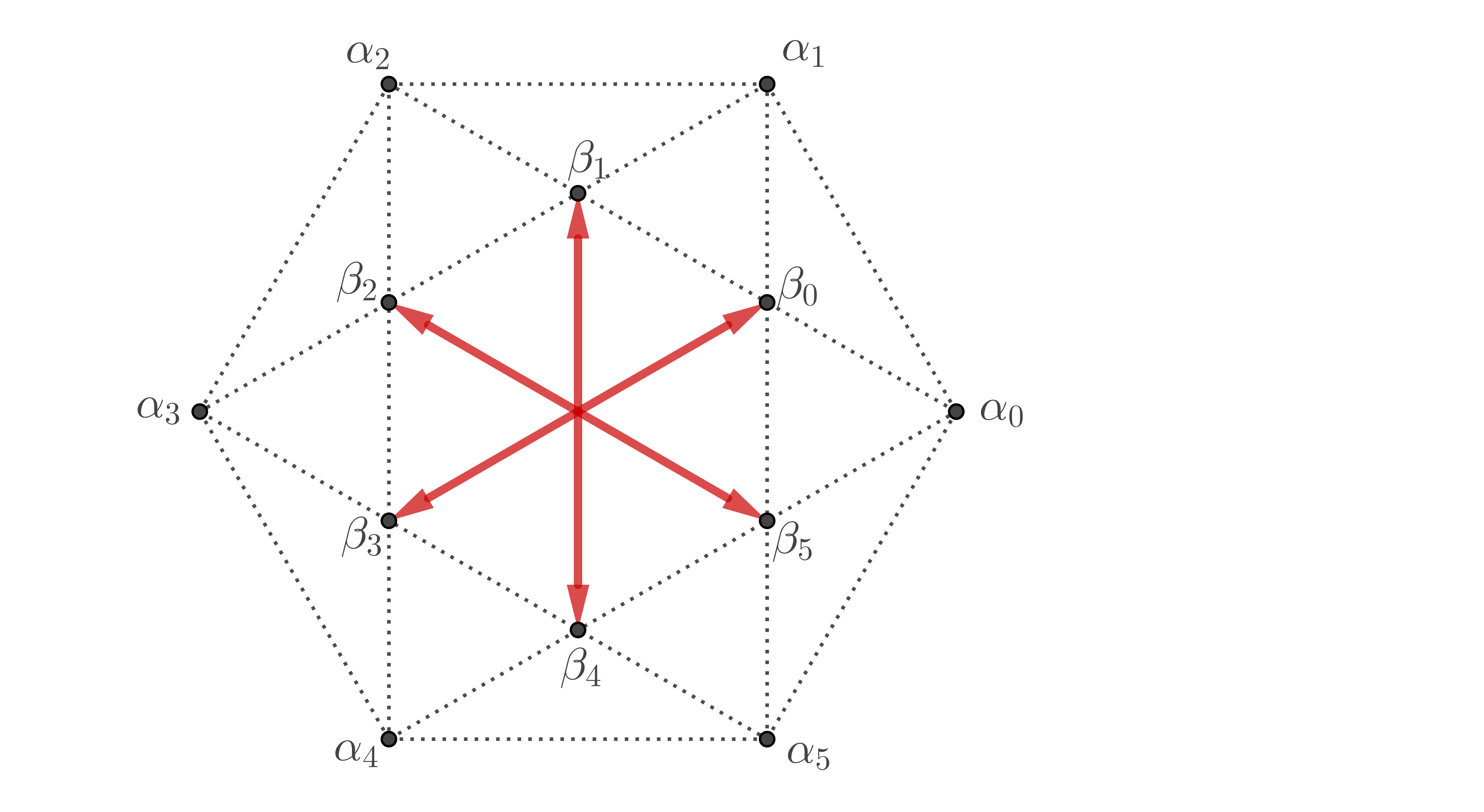}
\caption{Compass at a fixed component $Y_{0,k}$.}
\label{fig:Compass2}
\end{figure}

\begin{proof} Applying \cite[Corollary 2.14]{BWW} we know that each element in the compass $\mathcal{C}(Y_{0,k},X,H_2)$ is proportional either to a root $\alpha_i$ or $\beta_i$. We first show that no element of $\mathcal{C}(Y_{0,k},X,H_2)$ can be proportional to a  root $\alpha_i$. By contradiction, assume for instance that $\lambda\alpha_0\in \mathcal{C}(Y_{0,k},X,H_2)$, $\lambda\in\Q_{>0}$. Consider a subtorus $H_1^{\prime}\subset H_2$ corresponding to the projection $\pi'^{*}\colon M(H_2)\to M(H_1^{\prime})$, sending $\alpha_0$ to $0$, and a variety $Z^{\prime}\subset X^{H_1^{\prime}}$ which contains $Y_{0,k}$ (see Lemma \ref{lem:downgrading}). Since we have assumed that $\lambda\alpha_0\in \mathcal{C}(Y_{0,k},X,H_2)$, using Lemma \ref{lem:compass_prop} (ii) we also have $\lambda\alpha_0\in \mathcal{C}(Y_{0,k},Z',H_2/H_1')$. It follows that $Z'\supsetneq Y_{0,k}$, and that $Z'$ contains  an $H_2$-fixed point component of weight $\alpha_0$. By Lemma \ref{rem:symweights}, it contains also a fixed point component of weight $\alpha_3$, and using Lemma \ref{lem:downgrading} (iii) these two components are the isolated points $y_{0}$ and $y_{3}$. Applying Corollary \ref{cor:Y0contact},  $Z'$ is a contact manifold, on which the extremal $H_2/H_1'$-fixed components $y_0,y_3$ are Legendrian (Corollary \ref{cor:weightsBW2}), therefore $Z'\cong\P^1$ and $Y_{0,k}=\emptyset$, a contradiction.

At this point, by \cite[Corollary 2.14]{BWW}, the only possible elements in $\mathcal{C}(Y_{0,k},X,H_2)$ are proportional to the weights $\beta_i$'s and, since $Y_{0,k}\neq X$, we have at least one element, say $\beta_1'=\lambda_1\beta_1$, $\lambda_1\in\Q_{>0}$. Using Lemma  \ref{rem:symweights} we deduce that also $\beta'_4:=-\beta'_1\in \mathcal{C}(Y_{0,k},X,H_2)$, with the same multiplicity as $\beta'_1$. We take now the subtorus $H_1^1\subset H_2$ corresponding to the  the orthogonal projection $\pi_1^*\colon M(H_2)\to M(H_1)$ sending $\beta_1$ to zero (see Figure \ref{fig:hexagon2}). By Lemma \ref{lem:compass_prop} (i), the existence of $\beta'_i$ in the compass of $Y_{0,k}$ tells us that there exists a component $Z_{0,k}^1$ containing it and, as in the previous paragraph, one may show that $Z_{0,k}^1$  contains two $H_2$-fixed point components corresponding to the weights $\beta_1$ and $\beta_4$; note that since $Z_{0,k}^1$ contains $Y_{0,k}$ these components cannot be isolated points. The possibilities for these extremal fixed point components are listed in Proposition  \ref{prop:Z}; each of them determines $Z_{0,k}^1$ and, subsequently the multiplicities $\mult(\beta'_i)$ of the elements $\beta'_1$, $\beta'_4$ as elements of the compass $\mathcal{C}(Y_{0,k},Z_{0,k}^1,H_2/H_1)=\mathcal{C}(Y_{0,k},X,H_2)\cap\ker(\pi_1^*)$ (cf. Lemma \ref{lem:compass_prop} (ii)). By Proposition \ref{prop:Z}
this leaves us with the following possibilities:
\begin{table}[h!!]
\begin{tabular}{|c|c|c|c|c|c|}
\hline
 $Z_{0,k}^1$ & $Y_{0,k}$ & $\,\mult(\beta'_1)\,$ &$\dim X$ & \,$\dim Y_{0,k}$ \,& $\,\codim (Y_{0,k},X) \,$\\\hline\hline
$\, \DB_{(n-1)/2}(2)\,$ & \,$ \DB_{(n-3)/2}(2) $\,& $2$ & $\,\,2n+1\,\,$ & $\,2n-11\,$&$12$\\\hline
$\DD_{n/2}(2)$ &  $\DD_{(n-2)/2}(2)$ & $2$ &$\,2n+1\,$& $2n-11$&$12$\\\hline   
\,$\DA_5(1,5)$\, & $\DA_2(1,2) $ & $3$ & $21$& $3$ &$18$ \\\hline 
$\DD_6(2)$ & $\DA_5(1,5)$& $4$ & $33$&$9$&$24$\\\hline 
$\DE_7(1)$ & $\DE_6(2)$&$6$&$57$&$21$&$36$\\\hline
\end{tabular}
\end{table}\par
Note that, each possible $Z_{0,k}^1$ determines the isomorphism class of the fixed components $Z_{1}^i$, by Propositions \ref{prop:Z} and \ref{prop:Xmu}; in particular this determines the value of $n-1=\dim Z_{1}^i$ and consequently the dimension on $X$, that we have written in the fourth column of the table.

By repeating the same argument we can show that every element $\beta'_i=\lambda_i\beta_i$ ($\lambda_i\in\Q_{>0}$) in the compass has the same multiplicity of $\beta'_1$ and $\beta'_4$. Since the number of element in the compass, counted with multiplicity, equals $\dim X - \dim Y_{0,k}$ and this number in all the cases is six times the multiplicity of $\beta'_1$ we can conclude that all the $\beta'_i$'s appear in the compass, with multiplicity $(\dim X-\dim Y_{0,r})/6$. 

Our arguments above also show that $Y_{0,k}$ is contained in three fixed point components $Z_{0,k}^i$, $i=1,2,3$. Finally using the description of the compass at components of type $Y_{\beta_i,k}\subset Z_{0,k}^i$ (Proposition \ref{prop:compassHex}) and Lemma \ref{lem:compass_prop}, we may write 
$$\pi_i^*\mathcal{C}(Y_{0,k},X,H_2)=\cC(Z_{0,k}^i,X,H_1^i)=\pi_i^*\mathcal{C}(Y_{\beta_i,k},X,H_2)=\{(\pm 1)^{n-1-\dim Y_{\beta_i,k}}\},
$$
which suffices to show that $\lambda_i=1$ so that $\beta'_i=\beta_i$, for all $i$.
\end{proof}

\subsection{Step IV: The actions of $H_2$ on $X$ and $X_G$ have the same combinatorial data}

The results in the previous Section allow to state the following:

\begin{corollary}\label{cor:H2griddata}  
Under the assumptions of Theorem \ref{thm:LB_isolated_pts}, there exists a simple group $G'$ and a bijection $\psi:X^{H_2}\to X_{G'}^{H_2}$ such that for every $Y\in X^{H_2}$:
$$
Y\simeq \psi(Y),\quad \mu_L(Y)=\mu_{L_{G'}}(\psi(Y)),\quad \cC(Y,X,H_2)=\cC(\psi(Y),X_{G'},H_2).
$$
\end{corollary}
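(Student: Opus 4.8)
The plan is to read off the group $G'$ from the local classification of Step III, build the bijection $\psi$ by matching fixed components of equal weight, and then observe that the compass formulas already proved depend only on weights and dimensions, so that they transfer automatically.

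First I would pin down $G'$. By Remark \ref{rem:allequal} all the bandwidth-three slices $Z_1^i$ are mutually isomorphic, and by Proposition \ref{prop:Xmu} the pair $(Z_1^i,Y_{\beta_{i-1}})$ realizes exactly one of the six cases (1)--(6); together with $n-1=\dim Z_1^i$ this singles out a unique simple group $G'$ (of type $\DB$ or $\DD$ in cases (1),(2), and equal to $\DF_4,\DE_6,\DE_7,\DE_8$ in cases (3)--(6)) whose adjoint variety $(X_{G'},L_{G'})$ carries the $H_2$-downgrading of Section \ref{ssec:downgradings} with precisely the isomorphism classes recorded in Table \ref{tab:Freudenthal}. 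Since $X_{G'}$ is an adjoint variety, its $H_2$-fixed data is computed directly in Section \ref{ssec:Freudenthal}; moreover $X_{G'}$ satisfies the hypotheses of Theorem \ref{thm:LB_isolated_pts}, being contact Fano with $\CAut(X_{G'})^\circ=G'$ simple of rank $\geq 2$ and, by Corollary \ref{cor:adjointfix}, with isolated extremal $H$-fixed points. Hence Propositions \ref{prop:compassHex} and \ref{prop:compassY_0} apply to $X_{G'}$ as well.

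Next I would construct $\psi\colon X^{H_2}\to X_{G'}^{H_2}$ by matching components of equal weight. Each extremal point $y_i$ (weight $\alpha_i$) is sent to the unique extremal $H_2$-fixed point of $X_{G'}$ of weight $\alpha_i$ (isolated by Remark \ref{rem:isolatedpoints}). Each component $Y_{\beta_i,k}$ (weight $\beta_i$) is matched, connected component by connected component, with the corresponding one in $X_{G'}$; this is legitimate because Propositions \ref{prop:Xmu} and \ref{prop:Z} show that the lists of pairs $(Z_1^i,Y_{\beta_i})$ and triples $(Z_{0,k}^i,Y_{\beta_i,k},Z_{0,k}^i\cap Y_0)$ for $X$ coincide term by term with the ones for $X_{G'}$ in Table \ref{tab:Freudenthal}. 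Finally each weight-zero component $Y_{0,k}$, which by Proposition \ref{prop:compassY_0} lies inside three of the slices $Z_{0,k}^i$, is sent to the weight-zero component of $X_{G'}$ cut out by the same three bandwidth-two slices; the entries $Z_{0,k}^i\cap Y_0$ of Proposition \ref{prop:Z} guarantee agreement of isomorphism type. By construction $\psi$ is a bijection preserving both the isomorphism class, $Y\simeq\psi(Y)$, and the weight, $\mu_L(Y)=\mu_{L_{G'}}(\psi(Y))$.

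It then remains only to match the compasses, and here no new work is needed: Propositions \ref{prop:compassHex} and \ref{prop:compassY_0} express $\cC(Y,X,H_2)$ at every fixed component purely through the roots $\alpha_i,\beta_i$, the integer $n$, and $\dim Y$. Since those propositions hold for $X_{G'}$ too (first paragraph) and $\psi$ preserves weights and dimensions, both compasses are given by identical formulas, so $\cC(Y,X,H_2)=\cC(\psi(Y),X_{G'},H_2)$ for all $Y$. The step I expect to be most delicate is the weight-zero stratum: one must check that the family $\{Y_{0,k}\}$ and its incidence pattern among the slices $Z_{0,k}^i$ match those of $X_{G'}$ exactly, and not merely type by type. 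This is controlled by Proposition \ref{prop:compassY_0} --- each $Y_{0,k}$ meets exactly three slices and has compass $\{\beta_i^{(2n+1-\dim Y_{0,k})/6}\}$, which rigidifies the incidences --- together with Remark \ref{rem:allequal}, which propagates the isomorphism type uniformly around the hexagon.
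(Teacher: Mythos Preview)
Your proposal is correct and follows essentially the same approach as the paper: identify $G'$ from the isomorphism type of the bandwidth-three slices $Z_1^i$ (Proposition \ref{prop:Xmu} and Remark \ref{rem:allequal}), propagate this through Proposition \ref{prop:Z} and Remark \ref{rem:allequal2} to determine all $Z_{0,k}^i$ and $Y_{0,k}$, then match components by weight and read off equality of compasses from the explicit formulas in Propositions \ref{prop:compassHex} and \ref{prop:compassY_0}, observing these apply equally to $X_{G'}$. Your explicit remark that $X_{G'}$ itself satisfies the hypotheses of Theorem \ref{thm:LB_isolated_pts} (so the compass propositions can be invoked on both sides) is a point the paper leaves implicit, and your caution about the weight-zero incidence pattern is well placed but already handled by Proposition \ref{prop:compassY_0} exactly as you indicate.
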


\begin{proof}
Given a pair $(X,L)$ as in Theorem \ref{thm:LB_isolated_pts}, we consider the downgradings $H^i_1\subset H_2\subset H$, $i=0,\dots,5$, of the action of a maximal torus $H\subset G$ presented in Step II. From Remark \ref{rem:allequal} the $H^i_1$-fixed point components $Z^i_1$ are isomorphic for all $i$, and the same holds for all the $H^2_1$-fixed point varieties $Y_{\beta_i}$.  Subsequently, all the varieties $Z_{0,k}^i$, and all the inner fixed point components $Y_{0,k}$ are determined (Proposition \ref{prop:Z} and Remark \ref{rem:allequal2}). This shows that there exists a bijection $\psi$ among the set of components of $X^{H_2}$ and the set of fixed point components of $X_{G'}^{H_2}$ for a certain simple group $G'$ (cf. Table \ref{tab:Freudenthal}), and a torus of dimension two in $G'$ that we identify with $H_2$ (chosen as in Step II).
Now we note that, by Proposition \ref{prop:compassY_0}, the above data determine the compass at every component $Y_{0,k}$, so that $\cC(Y_{0,k},X,H_2)= \cC(\psi(Y_{0,k}),X_{G'},H_2)$. In particular $2n+1=\dim X=\dim X_{G'}$ and, applying Proposition \ref{prop:compassHex} to $(X,L)$ and to $(X_{G'},L_{G'})$, the compasses $\cC(Y,X,H_2)$, $\cC(\psi(Y),X_{G'},H_2)$ are equal for every $H_2$-fixed point component $Y$ in $X$.
\end{proof}

We will show now that $X_{G'}$ is the adjoint variety of the group $G$. We will make use of the following technical lemma:

\begin{lemma}\label{lem:mistaken identity} Let $G\ne G'$ be two semisimple groups with Lie algebras $\fg,\fg'$ of type $\DA_2$, $\DB$, $\DD$, $\DE$, $\DF_4$, $\DG_2$. If their Dynkin diagrams are different, then $\fg,\fg'$ are not isomorphic as $\Mo(H_2)$-graded vector spaces. 
\end{lemma}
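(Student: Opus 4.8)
The plan is to distinguish the Lie algebras $\fg$ and $\fg'$ not by their abstract isomorphism type (which is already encoded in the Dynkin diagram), but by the extra structure of the $\Mo(H_2)$-grading, which is determined by the choice of the subgroup $S_2\simeq\SL(3)$ made in Lemma \ref{lem:reduction}. The key observation is that the grading is governed by the projection $\imath^*:\Mo(H)\to\Mo(H_2)$, and that the dimensions of the graded pieces $\fg^2_m=\bigoplus_{\imath^*(\gamma)=m}\fg_\gamma$ for $m\in\Hex$ are combinatorial invariants: the multiplicity of $\imath^*$ over each lattice point of $\bigtriangleup(S_2)$ counts roots of $G$, and these multiplicities were already computed implicitly in Table \ref{tab:Freudenthal} and in Proposition \ref{prop:compassHex} and Proposition \ref{prop:compassY_0}. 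So the strategy is: if $\fg$ and $\fg'$ were isomorphic as $\Mo(H_2)$-graded vector spaces, then in particular $\dim\fg^2_m=\dim\fg'^2_m$ for every $m$, and I would show that this forces the Dynkin diagrams to coincide.

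First I would record, for each admissible type $\DA_2,\DB_r,\DD_r,\DE_6,\DE_7,\DE_8,\DF_4,\DG_2$, the tuple of graded dimensions $(\dim\fg^2_m)_{m\in\Hex}$. The simplest discriminating invariants are $\dim X_G+1=\dim\fg-\dim\fg^2_0$ type quantities and the dimensions of the inner fixed-point components appearing in Table \ref{tab:Freudenthal}; concretely, the dimension of the weight-zero piece $\fg^2_0$ equals $\dim\fg_0^{2,\ss}+\dim\fg_0^{2}-\dim\fg_0^{2,\ss}$, whose semisimple parts are listed explicitly in the table of Section \ref{ssec:downgradings} as $\DB_{r-3},\DD_{r-3},\DA_2\times\DA_2,\DA_5,\DE_6,\DA_2,0$. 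Since $\dim\fg=\sum_{m\in\Hex}\dim\fg^2_m$ and since a graded isomorphism preserves each summand, I would simply compare these tuples across the list. The total dimensions of the simple Lie algebras are already pairwise distinct except possibly in small coincidences, and where total dimensions might not immediately separate two candidates, the refined data — the dimension of $\fg^2_0$, equivalently the dimension of $Y_0$, and the common dimension $\dim Y_{\beta_i}$ of the weight-$\beta_i$ pieces (read off from Propositions \ref{prop:Xmu} and \ref{prop:Z}) — will.

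The heart of the argument is therefore a finite case-check: I would assemble for each of the seven admissible Dynkin types the triple
\[
\bigl(\dim\fg,\ \dim\fg^2_0,\ \dim\fg^2_{\beta_i}\bigr),
\]
and verify that no two distinct types share the same triple. Because the dimension of $X$ is $2n+1=\dim\fg-\dim\fg^2_0-1$ (the adjoint variety has dimension one less than the generic orbit codimension in $\PP(\fg)$), and because $\dim Y_{\beta_i}$ is pinned down by the classification in Proposition \ref{prop:Xmu} to the explicit values in its list (e.g.\ $\DA_2(2)$ for $\DF_4$, $\DA_2(2)\times\DA_2(1)$ for $\DE_6$, $\DA_5(4)$ for $\DE_7$, $\DE_6(6)$ for $\DE_8$, and a point or a quadric for $\DB,\DD$), these two numbers already separate the exceptional types from each other and from the classical ones. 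For the two infinite families $\DB_r$ and $\DD_r$, the relevant $\fg^2_0$ pieces have types $\DB_{r-3}$ and $\DD_{r-3}$, whose dimensions $(r-3)(2r-5)$ and $(r-3)(2r-7)$ differ, and likewise the total dimensions $r(2r+1)$ versus $r(2r-1)$ differ, so no $\DB_r$ coincides with any $\DD_{r'}$.

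The main obstacle I anticipate is not conceptual but organizational: one must confirm that the $\Mo(H_2)$-graded dimensions really are a complete enough invariant, i.e.\ that there is genuinely no accidental numerical coincidence among the finitely many admissible types, including the boundary cases where the classical series produce low rank (such as $\DB_3,\DD_4$) which must be checked to not collide with $\DA_2,\DG_2,\DF_4$ or with each other. The cleanest way to organize this is to tabulate the pairs $(\dim X,\dim Y_{\beta_i})$ directly from Table \ref{tab:Freudenthal}, since these are precisely the invariants that a graded isomorphism would have to preserve, and observe by inspection that the resulting list of pairs has no repetitions. Once the table is exhibited, the conclusion is immediate: distinct Dynkin diagrams yield distinct graded-dimension data, so $\fg$ and $\fg'$ cannot be isomorphic as $\Mo(H_2)$-graded vector spaces, which is the assertion of the lemma.
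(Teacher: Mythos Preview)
Your approach is essentially the same as the paper's: use the dimensions of the graded pieces as numerical invariants and verify by inspection that no two admissible Dynkin types produce the same data. The paper's execution is considerably more streamlined, however. Rather than tabulating the full triple $(\dim\fg,\ \dim\fg^2_0,\ \dim\fg^2_{\beta_i})$, the paper observes that the total dimension $\dim\fg$ already separates almost every pair: the equation $2n^2+n=2m^2-m$ forces $n=(2m-1)/2$, which is not an integer, so no $\fb_n$ has the same dimension as any $\fd_m$. A direct check then shows that the only dimensional coincidence in the whole list is $\dim\fb_6=\dim\fe_6=78$, and this single pair is separated by one extra computation, $\dim(\fb_6)^2_0=24\ne 18=\dim(\fe_6)^2_0$.

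Your treatment of the classical families contains a genuine gap. You compare $\DB_r$ with $\DD_r$ for the \emph{same} rank $r$ (total dimensions $r(2r+1)$ versus $r(2r-1)$, and $\fg^2_0$ of types $\DB_{r-3}$ versus $\DD_{r-3}$) and then conclude that ``no $\DB_r$ coincides with any $\DD_{r'}$''. But that argument says nothing about different ranks $r\ne r'$; it is precisely the Diophantine observation above that rules those out. Once that is in place, the more elaborate tabulation you propose becomes unnecessary.
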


\begin{proof} The dimensions of the Lie algebras appearing in the statement are:
\begin{table}[h!!]
\begin{tabular}{|c|c|c|c|c|c|c|c|c|}
\hline
$\g$&{$\fa_2$}&$\fb_n$&$\fd_m$&$\fe_6$&$\fe_7$&$\fe_8$&$\ff_4$&{$\fg_2$}\\\hline\hline
$\,\dim(\g)\,$&{$8$}&$\,2n^2+n\,$&$\,2m^2-m\,$&$\,78\,$&$\,133\,$&$\,248\,$&$\,52\,$&{$\,14\,$}\\\hline
\end{tabular}
\end{table}

Note that given two positive integers $n,m$, $2n^2+n=2m^2-m$ implies that $n=(2m-1)/2$, a contradiction. Moreover, one may easily check that the only case in which $\fb_n$ or $\fd_m$ have the same dimension of another Lie algebra of the list is $\dim(\fb_6)=\dim(\fe_6)=78$. In this case, we may study explicitly the projections $\imath^*$, described in Lemma \ref{lem:reduction}, to conclude that $\dim(\fb_6)_0=24$, $\dim(\fe_6)_0=18$.
\end{proof}

\begin{corollary}\label{cor:XGXG'}
The Lie algebras $\fg$ and $\fg'$ of the groups $G$ and $G'$ are isomorphic and, in particular, $X_G\simeq X_{G'}$.
\end{corollary}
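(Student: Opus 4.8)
The plan is to read off $\fg$ and $\fg'$ as spaces of sections carrying their $\Mo(H_2)$-gradings, match these by equivariant localization, and then invoke Lemma \ref{lem:mistaken identity} to pin down the Dynkin type. First I would recall that, by Lemma \ref{lem:AutCAut}, there is an $H$-equivariant (hence $H_2$-equivariant) isomorphism $\fg=\Lie(G)\simeq\HH^0(X,L)$, so that the $\Mo(H_2)$-grading of $\fg$ produced by the adjoint action of $H_2$ is exactly the $H_2$-weight decomposition of $\HH^0(X,L)$. On the other side, $\HH^0(X_{G'},L_{G'})$ is the adjoint representation $\fg'$ of $G'$, again with its $\Mo(H_2)$-grading. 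It therefore suffices to prove that $\HH^0(X,L)$ and $\HH^0(X_{G'},L_{G'})$ are isomorphic as $H_2$-representations.

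To obtain this isomorphism I would apply Proposition \ref{prop:BWW224}$(C_1)$ to the pairs $(X,L)$, $(X_{G'},L_{G'})$, the torus $H_2$, and the bijection $\psi\colon X^{H_2}\to X_{G'}^{H_2}$ of Corollary \ref{cor:H2griddata}. The vanishing required by $(C_1)$ holds for both pairs by Kodaira vanishing: since $\omega_X=L^{-(n+1)}$ one has $L=K_X+(n+2)L$ with $(n+2)L$ ample, so $\HH^i(X,L)=0$ for $i>0$, and likewise for $X_{G'}$. Condition $(A_1)$, the weight equality in $(A_2)$, and the compass equality in $(A_3)$ are precisely the content of Corollary \ref{cor:H2griddata}. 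What remains are the bundle isomorphisms $L_{|Y}\simeq (L_{G'})_{|\psi(Y)}$ and $\cN^{-\nu}(Y)\simeq\cN^{-\nu}(\psi(Y))$: the former follows from the explicit identification of $L_{|Y}$ in Propositions \ref{prop:Xmu} and \ref{prop:Z} (the ample generator, respectively $\cO_{\P^2}(2)$), which agrees with Table \ref{tab:Freudenthal}; the latter comes from the structural analysis of Step III (Propositions \ref{prop:Xmu}, \ref{prop:Z}, \ref{prop:compassY_0}), which identifies the intermediate varieties $Z_1^i$ and $Z_{0,k}^i$, hence the eigen-subbundles of $\cN_{Y|X}$ via the downgrading decomposition of Lemma \ref{lem:compass_prop}, together with Proposition \ref{prop:Atiyah} for the isotropic directions. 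Granting these, $(C_1)$ yields $\HH^0(X,L)\simeq\HH^0(X_{G'},L_{G'})$ as $H_2$-modules, so $\fg\simeq\fg'$ as $\Mo(H_2)$-graded vector spaces.

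Finally I would feed this into Lemma \ref{lem:mistaken identity}. Both $\fg$ and $\fg'$ are simple of a type covered by that lemma: $\fg$ is of type $\DA_2$, $\DB$, $\DD$, $\DE$, $\DF_4$ or $\DG_2$ by Corollary \ref{cor:LSisolated} and Lemma \ref{lem:noCtype}, while $\fg'$ is of a type occurring in Table \ref{tab:Freudenthal}. If $G=G'$ there is nothing to prove; otherwise the contrapositive of Lemma \ref{lem:mistaken identity} applies, and since $\fg$ and $\fg'$ are isomorphic as $\Mo(H_2)$-graded vector spaces, $G$ and $G'$ must share the same Dynkin diagram. In either case $\fg\simeq\fg'$ as Lie algebras, and consequently the associated adjoint varieties satisfy $X_G\simeq X_{G'}$. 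The main obstacle is the normal-bundle part of $(A_3)$: the purely combinatorial matching is already achieved in Corollary \ref{cor:H2griddata}, but upgrading the compass equalities to genuine isomorphisms of the eigen-subbundles $\cN^{-\nu}(Y)$ is where the contact geometry of Proposition \ref{prop:Atiyah} and the bandwidth classification Theorems \ref{thm:uniqueBW2} and \ref{thm:bw3} must do the real work.
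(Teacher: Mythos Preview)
Your proof is correct and follows essentially the same route as the paper: verify the hypotheses of Proposition~\ref{prop:BWW224}$(C_1)$ for the $H_2$-actions on $(X,L)$ and $(X_{G'},L_{G'})$ to obtain $\fg\simeq\HH^0(X,L)\simeq\HH^0(X_{G'},L_{G'})\simeq\fg'$ as $\Mo(H_2)$-graded vector spaces, and then conclude via Lemma~\ref{lem:mistaken identity}. One minor sharpening: for the exceptional eigen-subbundles $\cN^{-(\beta_{i\pm 2}-\beta_i)}(Y_{\beta_i,k})$ the paper does not invoke Proposition~\ref{prop:Atiyah} but rather applies the contact isomorphism $F\simeq F^\vee\otimes L$ directly, which identifies these rank-one bundles with $L_{|Y_{\beta_i,k}}\otimes N^\vee$ for $N$ the negative part of $\cN_{Y_{\beta_i,k}|Z_1^{i+1}}$, already known from Step~III.
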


\begin{proof}
We will show that the assumptions of Proposition \ref{prop:BWW224} are fulfilled  for the actions of $H_2$ on $(X,L)$ and $(X_{G'},L_{G'})$, so that $\HH^0(X,L)$ and $\HH^0(X_{G'},L_{G'})$ are isomorphic as $H_2$-moduli. Since $\HH^0(X,L)$ is isomorphic to $\fg$ as an $H_2$-module, and the same holds for $X_{G'}$, it follows that $\fg\simeq \fg'$ as $H_2$-modules. In this way we conclude that they are also isomorphic as Lie algebras by Lemma \ref{lem:mistaken identity}.

By Corollary \ref{cor:H2griddata} we are left to check that $L_{|Y}$ and $\cN^{-\nu}(Y)$ are isomorphic to their counterparts in $X_{G'}$, for every component $Y\subset X^{H_2}$ and every $\nu\in\cC(Y,X,H)$. Since every $Y$ is contained in an $H_1^i$-fixed point component $Z$ (by Lemma \ref{lem:downgrading}), and the restriction of $L$ on these components is determined by the isomorphism class of $Z$ (see Propositions  \ref{prop:Xmu}, \ref{prop:Z} and \ref{prop:compassY_0}), the first assumption is fulfilled. 

Now we check the equality of the graded parts of the normal bundles $\cN^{-\nu}(Y)$. By Propositions  \ref{prop:compassHex}, \ref{prop:compassY_0} and Lemma \ref{lem:compass_prop}, the bundles $\cN^{-\nu}(Y)$ are $\Mo(H_2/H^i_1)$-graded parts of the normal bundle of $Y$ on the $H_1^i$-fixed point component containing it (hence they coincide with their counterparts in $X_{G'}$), for every $\nu\in \cC(Y,X,H_2)$ different from the following cases (see Figures \ref{fig:Compass1}, \ref{fig:Compass2}):
$$
(Y,\nu)=(y_{\alpha_i},-\alpha_i), \quad (y_{\alpha_i},\alpha_{i\pm 1}-\alpha_i), \quad (Y_{\beta_i,k},\beta_{i\pm 2}-\beta_i),\quad i=0,\dots,5. 
$$
In the first two cases, since $y_{\alpha_i}$ is an isolated point, there is nothing to prove. In the latter we simply note that the contact isomorphism restricts to an isomorphism $\cN^{-(\beta_{i\pm 2}-\beta_i)}(Y_{\beta_i,k})$ $\simeq$ $ L_{|Y_{\beta_i,k}}\otimes N^\vee$, where $N$ denotes the negative part (with respect to $H_2/H_1^{i+1}$) of the normal bundle $\cN_{Y_{\beta_i,k}|Z_1^{i+1}}$. This completes the proof.
\end{proof}

%!TEX root = contactFreudenthal.tex

\subsection{Step V: Conclusion}

In this final step we will conclude the proof of Theorem \ref{thm:LB_isolated_pts} by applying Proposition \ref{prop:BWW224}, part $(C_2)$. In order to do so we need to study the fixed point components of the action on $(X,L)$ of the maximal torus $H$ of $G=\CAut^\circ(X)$, the  normal bundles of these components, the restrictions of $L$, and compare these data with those obtained for $(X_G,L_G)$ (see Section \ref{sec:torusGP}). 

We start by comparing $X^H$ and $X_G^H$, and the weights of $L$ and $L_G$.

\begin{lemma}\label{lem:grid1} Under the assumptions of Theorem \ref{thm:LB_isolated_pts}, $\dim X=\dim X_{G}$. Moreover,  the $H$-action on $X$ has only isolated fixed points,  $X^H$ is in $1$-to-$1$ correspondence with $X_G^H$, and the corresponding weights of the action on $L$ are the vertices of the root polytope $\bigtriangleup(G)$.
\end{lemma}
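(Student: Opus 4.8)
The plan is to leverage the fact that we have already completely matched the combinatorial data of the $H_2$-action on $X$ and on $X_G$ (Corollaries \ref{cor:H2griddata} and \ref{cor:XGXG'}), and to propagate this information upward to the full maximal torus $H$. First I would establish $\dim X=\dim X_G$: this is immediate from Corollary \ref{cor:XGXG'}, since $\fg\simeq\fg'$ forces $\dim X=\dim X_{G'}=\dim X_G$ (recall $2n+1=\dim X=\dim X_{G'}$ was already noted in the proof of Corollary \ref{cor:H2griddata}, and $X_{G'}\simeq X_G$).

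Next I would show that the $H$-action on $X$ has only isolated fixed points. The key observation is that $X^H\subset X^{H_2}$, so every $H$-fixed point lies in one of the $H_2$-fixed components $Y$ classified in Step III. By Lemma \ref{lem:downgrading}(iii), the quotient torus $H/H_2$ acts on each such $Y$ with fixed locus $X^H\cap Y$, and the weights of these $H/H_2$-fixed points are precisely the long roots of $G$ projecting (via $\imath^*$) to the given $H_2$-weight $m$ of $Y$ (as discussed for adjoint varieties in Section \ref{ssec:Freudenthal}). Since each component $Y\subset X^{H_2}$ is isomorphic to its counterpart $\psi(Y)\subset X_{G'}^{H_2}=X_G^{H_2}$ together with matching compass data, the induced $H/H_2$-action on $Y$ has exactly the same fixed point structure as on $\psi(Y)$. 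On the homogeneous side the $H$-fixed points of $X_G$ are the isolated points $w[v]$, $w\in W$ (Corollary \ref{cor:adjointfix}), so the $H/H_2$-action on each $\psi(Y)$ has only isolated fixed points; transporting this through the isomorphism $Y\simeq\psi(Y)$ shows the same for $X$. Hence $X^H$ consists of isolated points, and the bijection $\psi$ restricts to a bijection $X^H\to X_G^H$ compatible with the weight maps.

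To identify the weights I would argue that, under this bijection, the $H$-weight of $L$ at a fixed point $x\in X^H\cap Y$ is determined by its $H_2$-weight $m=\mu_L(Y)$ together with the $H/H_2$-weight coming from the fixed-point data on $Y$; since both pieces agree with those of $\psi(x)\in X_G^H$, we get $\mu_L(x)=\mu_{L_G}(\psi(x))$. By Corollary \ref{cor:adjointfix} these weights are exactly $\{w(\beta):w\in W\}$, the long roots of $G$, which together with the zero-weight structure of the root polytope are the vertices of $\bigtriangleup(G)$ (recall $\bigtriangleup(X,L,H)=\bigtriangleup(G)$ by Lemma \ref{lem:polyroot}, and the vertices of the root polytope of a simple group are precisely its long roots).

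The main obstacle I anticipate is the careful bookkeeping of the $H/H_2$-action on the positive-dimensional $H_2$-fixed components $Y_{\beta_i,k}$ and $Y_{0,k}$: one must verify that the isomorphism $Y\simeq\psi(Y)$ supplied by Step IV is genuinely equivariant for the residual torus $H/H_2$, not merely an abstract isomorphism of varieties, so that the count and weights of the interior $H$-fixed points transfer correctly. This is where the structure of the Freudenthal varieties from Section \ref{ssec:Freudenthal} and the fact that each $Y$ is a minimal orbit of a representation of the semisimple part $\fg_0^{2,\ss}$ should be invoked, matching the combinatorics of long roots of $G$ with $\imath^*$-fibers; the delicate point is ensuring no spurious or missing fixed points arise on $X$ compared to $X_G$, which is controlled precisely by the compass equalities already established.
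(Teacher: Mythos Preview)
Your argument for $\dim X=\dim X_G$ is fine, but the heart of your proposal contains a genuine gap, which you yourself flag but do not resolve. You want to transport the $H/H_2$-fixed point structure from $\psi(Y)\subset X_G$ to $Y\subset X$ via the isomorphism $Y\simeq\psi(Y)$ furnished by Corollary~\ref{cor:H2griddata}. This would require that isomorphism to be $H/H_2$-equivariant, and nothing established so far gives that: Corollary~\ref{cor:H2griddata} records only the $H_2$-weight and the $H_2$-compass of each component, which are $H_2$-level invariants and say nothing about how the residual torus $H/H_2$ acts inside $Y$. Your closing sentence, that the absence of spurious or missing interior $H$-fixed points ``is controlled precisely by the compass equalities already established,'' is therefore not justified---those equalities live in $\Mo(H_2)$, not in $\Mo(H/H_2)$. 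Nor does the fact that each $\psi(Y)$ is a minimal $G_0^{2,\ss}$-orbit help: while $G_0^{2,\ss}$ (the connected centralizer of $H_2$) does act on each $Y\subset X^{H_2}$, you have no control over that action on the $X$ side, and establishing that $Y$ is $G_0^{2,\ss}$-homogeneous with the expected isotropy would essentially require knowing $X\simeq X_G$ already.

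The paper avoids this obstacle by a completely different and much cleaner route: an Euler-characteristic count. One downgrades the $H_2$-action on both $X$ and $X_G$ to a \emph{general} one-dimensional subtorus $H_1\subset H_2$, so that $X^{H_1}=X^{H_2}$ and $X_G^{H_1}=X_G^{H_2}$. Since the $H_2$-compasses (hence the $H_1$-compasses) match component by component, Theorem~\ref{thm:BB_decomposition} gives $H_\bullet(X,\Z)\simeq H_\bullet(X_G,\Z)$; in particular $\chi(X)=\chi(X_G)$. Now $\chi(X_G)=|X_G^H|$ equals the number of long roots of $G$, i.e.\ the number of vertices of $\bigtriangleup(G)=\bigtriangleup(X,L,H)$. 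By hypothesis each vertex already corresponds to an isolated $H$-fixed point of $X$, so these alone account for all of $\chi(X)$; any further $H$-fixed component would strictly increase $\chi(X)$, a contradiction. This argument never touches the $H/H_2$-action on the positive-dimensional $H_2$-components, and that is precisely why it succeeds where your approach stalls.
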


\begin{proof}
By Corollary \ref{cor:LSisolated} we know that $G$ is a simple group. Using Corollaries \ref{cor:H2griddata} and \ref{cor:XGXG'}, we have that $X^{H_2}$ and $X_G^{H_2}$ are in $1$-to-$1$ correspondence, and that the compasses at two corresponding components are the same. By Remark \ref{rem:isolatedpoints}, the extremal fixed points of $X$ and $X_G$ are isolated, hence the cardinalities of the compasses at these points equal  the dimensions of the varieties in question, and we get $\dim X=\dim X_G$. 

We now consider the downgrading  of the actions of $H_2$ on $X$ and $X_G$ to a general one dimensional subtorus $H_1\subset H_2$. Applying \cite[Lemma~4.1]{CARRELL} we obtain that $X^{H_1}=X^{H_2}$ and $X_G^{H_1}=X_G^{H_2}$, and we get a bijection among $X^{H_1}$  and $X_G^{H_1}$. By the properties of downgrading (Lemma \ref{lem:compass_prop} (i)), the compasses of the action of $H_1$ on $X$ and $X_G$ on corresponding components are equal, and so, by Theorem \ref{thm:BB_decomposition}, the singular homology groups of $X$ and $X_G$ are equal as well. In particular, $X$ has no odd degree homology, and the Euler characteristics of $X$ and $X_{G}$ are equal.

On the other hand, again by Theorem \ref{thm:BB_decomposition}, the Euler characteristic of $X_{G}$ is equal to the cardinality of $X_G^H$, which is bijective via the weight map $\mu_{L_G}$ to the set of long roots of $G$ (Corollary \ref{cor:adjointfix}), that is to the set of vertices of the root polytope $\bigtriangleup({G})$.
Since by Lemma \ref{lem:polyroot} one has $\bigtriangleup(G)=\bigtriangleup(X,L,H)$ and, by hypothesis, the vertices of $\bigtriangleup(X,L,H)$ are the weights of a set of isolated fixed points of $X$, it follows that the action of $H$ on $X$ cannot have
other fixed point components (because they would contribute to the Euler characteristic
of $X$). This concludes the proof.
\end{proof}
 
Since by the above lemma $X^H$ consists of isolated points, in order to apply Proposition \ref{prop:BWW224} we need only to prove that the compasses of the action of $H$ on $X$ are the same as the ones of the corresponding adjoint variety $X_G$. By abuse, we will denote corresponding fixed points in $X$ and $X_G$ by the same letter.

\begin{lemma}\label{lem:grid2}
Under the assumptions of Theorem \ref{thm:LB_isolated_pts}, for every fixed point $y\in X^{H}$ the compass $\mathcal{C}(y,X, H)$ is equal to $\mathcal{C}(y,X_G, H)$.
\end{lemma}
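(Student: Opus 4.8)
The plan is to exploit the symmetry already in place and to reassemble the full $H$-compass from the downgraded data collected in Steps III--IV. First I would reduce to a single fixed point. Since $G=\CAut(X)^\circ$ acts on $X$ with $L$ linearized (Lemma \ref{lem:AutCAut}) and on $X_G$, the normalizer $N_G(H)$ acts on both, permuting the $H$-fixed points and transforming weights and compasses equivariantly: if $n\in N_G(H)$ represents $w\in W$ then $\mu_L(n\cdot y)=w(\mu_L(y))$ and $\cC(n\cdot y,X,H)=w\,\cC(y,X,H)$, and similarly on $X_G$. By Lemma \ref{lem:grid1} the weight map identifies $X^H$ and $X_G^H$ with the set of long roots of $G$, on which $W$ acts transitively (Corollary \ref{cor:adjointfix}), and the bijection $X^H\simeq X_G^H$ is by weight, hence $W$-equivariant. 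Therefore it suffices to establish $\cC(y_0,X,H)=\cC(y_0,X_G,H)$ at the single fixed point $y_0=y_{\alpha_0}$, an extremal $H_2$-fixed point for the downgrading $H_2$ fixed in Step II, of weight a long root $\gamma_0$ with $\imath^*\gamma_0=\alpha_0$; note that $y_{\alpha_0}$ is an isolated $H$-fixed point by Remark \ref{rem:isolatedpoints} and Lemma \ref{lem:downgrading}.

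By Corollaries \ref{cor:H2griddata} and \ref{cor:XGXG'} the projection $\imath^*\cC(y_0,X,H)=\cC(y_{\alpha_0},X,H_2)$ is known and agrees with its counterpart on $X_G$; moreover $\cC(y_0,X,H)$ consists of $2n+1$ weights, each a positive rational multiple of $\gamma'-\gamma_0$ for a fixed-point weight $\gamma'$ (a long root), by Lemma \ref{lem:compass_prop2}. The directions lying over the vertices and edges of the hexagon $\bigtriangleup(S_2)$ are forced: by Lemma \ref{lem:sinksourceiso} the opposite root $-\gamma_0$ appears with multiplicity one and, projecting to $-\alpha_0$ (which itself has multiplicity one in $\cC(y_{\alpha_0},X,H_2)$), it exhausts that slot; and by Lemma \ref{lem:compass_extremalpoints}(1) the two edge directions have multiplicity one, so the constraint that they point to a long root (Lemma \ref{lem:compass_prop2}) while projecting to $\alpha_{\pm1}-\alpha_0$ pins them down as the roots $\gamma_{\pm1}-\gamma_0$ toward the neighbouring vertices. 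All of these coincide with the corresponding entries of $\cC(y_0,X_G,H)$.

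The genuinely delicate part is the "bulk" of the compass, the $2(n-1)$ weights projecting to $\beta_0-\alpha_0$ and $\beta_5-\alpha_0$: here $\imath^*$ is far from injective, and a single rank-two downgrading cannot recover their lifts to $\Mo(H)$. To circumvent this I would intersect the compass with the kernels of the rank-one downgradings through $y_0$. The two bandwidth-three components $Z_1^{0},Z_1^{2}$ containing $y_0$ (Proposition \ref{prop:Xmu}) satisfy $\cC(y_0,Z_1^{i},H/H_1^{i})=\cC(y_0,X,H)\cap\ker(\Mo(H)\to\Mo(H_1^{i}))$ by Lemma \ref{lem:compass_prop}, and a direct check shows that precisely the $n-1$ directions over $\beta_5-\alpha_0$ (resp.\ over $\beta_0-\alpha_0$) lie in $\ker(\Mo(H)\to\Mo(H_1^{0}))$ (resp.\ $\ker(\Mo(H)\to\Mo(H_1^{2}))$). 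Thus these directions are recovered as full vectors in $\Mo(H)$, not merely through their $\imath^*$-projections. Since each $Z_1^{i}$ is one of the rational homogeneous Freudenthal varieties of Proposition \ref{prop:Xmu} (matching Table \ref{tab:Freudenthal}) and $H/H_1^{i}$ has rank $r-1$ equal to the rank of $\Aut^\circ(Z_1^{i})=G_0^{i,\ss}$, the torus $H/H_1^{i}$ is a maximal torus there; hence $\cC(y_0,Z_1^{i},H/H_1^{i})$ is given by the root-theoretic formula of Lemma \ref{lem:compassGP} and equals the compass of the corresponding subvariety of $X_G$. Combining the three pieces yields $\cC(y_0,X,H)=\cC(y_0,X_G,H)$; by the $W$-reduction this holds at every fixed point, and feeding it, together with Lemma \ref{lem:grid1}, into Proposition \ref{prop:BWW224}$(C_2)$ completes the proof of Theorem \ref{thm:LB_isolated_pts}.

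The main obstacle is exactly this last step: showing that the $H/H_1^{i}$-action on the Freudenthal subvariety $Z_1^{i}$ is conjugate to the maximal-torus action on its homogeneous model, so that its compass is computed by roots. This rests on the rigidity of these homogeneous spaces, which forces any rank-$(r-1)$ torus to be a maximal torus and thereby upgrades the rank-two matching of Step IV to the full $H$-compass; the explicit hexagon bookkeeping above (confirming that the three families of directions exhaust the $2n+1$ entries and fall into the appropriate kernels) is the routine part.
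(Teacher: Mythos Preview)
Your approach differs from the paper's in one substantial way: you use the transitive action of the Weyl group $W=N_G(H)/H$ on $X^H$ (identified with the long roots by Lemma~\ref{lem:grid1}) to reduce the verification to a single fixed point $y_0$ with $\imath^*\mu_L(y_0)=\alpha_0$, whereas the paper carries out a three-case analysis according to whether $\imath^*\mu_L(y)$ is a vertex, a nonzero inner point, or the origin of the hexagon. Your reduction is valid and genuinely simpler; the paper's handling of the $\beta_i$- and $0$-type points, and its use of the bandwidth-two subvarieties $Z^i_{0,k}$, becomes unnecessary once one observes the $W$-equivariance of the bijection $X^H\simeq X_G^H$. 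At $y_0$ itself your decomposition into the three rigid directions plus the two packets coming from $Z_1^0$ and $Z_1^2$ is exactly the paper's Case~1 (note $Z_1^2=Z_{-1}^5$ since $H_1^2=H_1^5$).

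There is, however, a real gap at the point you yourself flag as the ``main obstacle''. Knowing that $H/H_1^i$ has rank $r-1=\rank\Aut^\circ(Z_1^i)$ shows it is a maximal torus, but that alone does not pin down $\cC(y_0,Z_1^i,H/H_1^i)$ as a \emph{specific} subset of $\Mo(H)$: two maximal tori are conjugate only inside $\Aut^\circ(Z_1^i)$, and such a conjugation does not act as the identity on $\Mo(H)$, so Lemma~\ref{lem:compassGP} by itself gives the compass only up to an a~priori unknown identification of root systems. (The paper's proof is equally terse here---it simply asserts that the compasses agree because the $Z_1^i$ ``are the same''.) The clean fix is to observe that not just $H$ but the full centralizer $G_0^i=C_G(H_1^i)^\circ\subset G$ acts on the $H_1^i$-fixed component $Z_1^i\subset X$, and that this action surjects onto $\Aut^\circ(Z_1^i)$ (their Lie algebras are both $\fg_0^{i,\ss}$). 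Consequently the roots of $\Aut^\circ(Z_1^i)$ with respect to the image of $H$ are precisely the roots of $\fg_0^{i,\ss}\subset\fg$, sitting canonically in $\Mo(H)$ and independent of whether one works in $X$ or in $X_G$; then Lemma~\ref{lem:compassGP}, applied with these roots and with $y_0$ identified by its $L$-weight, gives the matching of compasses you need. With this amendment your argument goes through and is more economical than the paper's.
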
  

\begin{proof}
We will make use of our description of the compasses of the $H_2$-action on $X$ (Propositions \ref{prop:compassHex} and \ref{prop:compassY_0}), and  of the $H_1^i$-fixed point components $Z_{0,k}^i$, $Z_1^i$, being $H_1^i$ the subtorus of $H_2$ corresponding to the orthogonal projection $\pi_i^*\colon \Mo(H_2)\to \Mo(H_1^i)$ sending $\beta_i$ to $0$ (see Figures \ref{fig:hexagon2}, \ref{fig:Compass1}, \ref{fig:Compass2}, and Notation \ref{not:Z}).

As we already observed (see the proof of Corollary \ref{cor:H2griddata}, and Corollary \ref{cor:XGXG'}), for every $i$ and every index $k$ the subvarieties $Z_{0,k}^i$, $Z_1^i$ are the same for $X$ and $X_G$. 

Given an $H$-fixed point $y\in X$, we will distinguish three cases, according to the value of $\imath^*(\mu_L(y))$, being $\imath^*\colon \Mo(H)\to \Mo(H_2)$ the projection.

Assume that $\imath^*(\mu_L(y))$ is an extremal fixed point of $\bigtriangleup(S_2)$, say $\alpha_0$. By Lemma \ref{lem:sinksourceiso} the element $-\mu_L(y)$ appears with multiplicity one in the $H$-compass at $y$. Using Lemma \ref{lem:compass_extremalpoints}, $\cC(y,X,H)$ contains two elements with multiplicity one projecting to $\alpha_1-\alpha_0$ and $\alpha_5-\alpha_0$ via $\imath^*$; by Lemma \ref{lem:compass_prop2}, these elements are necessarily $\mu_L(y_1)-\mu_L(y)$ and $\mu_L(y_5)-\mu_L(y)$, where $y_1, y_5\in X^H$ are the only fixed points satisfying that $\imath^*\mu_L(y_1)=\alpha_1$, $\imath^*\mu_L(y_5)=\alpha_5$. 
The remaining $2(n-1)$ elements are: 
\begin{enumerate}
\item $n-1$ elements in $\cC(y,X,H)\cap \ker(\pi_5^*\circ\imath^*)=\cC(y,Z_{-1}^5,H/H_1^5)$;
\item $n-1$ elements in $\cC(y,X,H)\cap \ker(\pi_0^*\circ\imath^*)=\cC(y,Z_1^0,H/H_1^0)$.
\end{enumerate}
Since the varieties $Z_{-1}^5$, $Z_1^0$ are the same for $X$ and for $X_G$, these elements of the compass are the same in both cases, and we get $\cC(y,X,H)=\cC(y,X_G,H)$. 

If $\imath^*(\mu_L(y))$ is an inner point of $\bigtriangleup(S_2)$ different from zero, say $\beta_2$, then, denoting by $Y_{\beta_2,k}$ the unique $H_2$-fixed component containing the point $y$,
 $\cC(y,X,H)$ consists of $2n+1$ elements, $2n-1$ of which can be described as follows: \begin{enumerate}
\item $n-1$ elements in $\cC(y,X,H)\cap \ker(\pi_1^*\circ\imath^*)=\cC(y,Z_{-1}^1,H/H_1^1)$;
\item $n-1-\dim Y_{\beta_2,k}$ elements in $\cC(y,Z_{-1}^0,H/H_1^0)\setminus \cC(y,Z_{-1}^1,H/H_1^1)$; note that $\cC(y,Z_{-1}^0,H/H_1^0)\cap \cC(y,Z_{-1}^1,H/H_1^1)=\cC(y,Y_{\beta_2,k},H/H_2)$; 
\item $\dim Y_{\beta_2,k}+1$ elements in $\cC(y,Z_{0,k}^2,H/H^2_1)$.
\end{enumerate}
Since the varieties $Z_{-1}^0$, $Z_{-1}^1$ and $Z_{0,k}^2$ are the same for $X$ and for $X_G$, these elements of the compass are the same in both cases. Among the elements in the compasses $\cC(y,Z_{-1}^1,H/H_1^1)$ and $\cC(y,Z_{-1}^0,H/H_1^0)$ we have two distinguished ones, appearing with multiplicity one, that project via $\imath^*$ onto $\alpha_2-\beta_2$ and $\alpha_3-\beta_2$, respectively; call them $v_2$ and $v_3$. By Lemma \ref{lem:compass_prop2}, they are necessarily the vectors $v_2=\mu_L(y_2)-\mu_L(y)$, $v_3=\mu_L(y_3)-\mu_L(y)$, where $y_2,y_3\in X^H$ are the only fixed points satisfying that $\imath^*\mu_L(y_2)=\alpha_2$, $\imath^*\mu_L(y_3)=\alpha_3$. Then we can apply Lemma \ref{rem:symweights}  to $v_2$ and $v_3$ to find  the last two elements of the compass at $y$: $-\mu_L(y_2)$, $-\mu_L(y_3)$. We conclude that $\cC(y,X,H)=\cC(y,X_G,H)$. 

Finally, in the case of a fixed point $y$ such that $\imath^*(\mu_L(y))=0$, the argument is analogous: the compass $\cC(y,X,H)$ will be the union of the compasses of the varieties $Z_{0,k}^i$ containing $y$, which are the same for $X$ and for $X_G$. 
\end{proof}

%%%%%%%%%%%%%%%%%%%%%%%

%%%%%%%%%%%%%%%%%%%%%%%

\section{High rank torus actions on contact manifolds}
\label{contact_high}

As explained in the Introduction, we use Theorem \ref{thm:LB_isolated_pts} to improve Fang's theorems (\cite{Fa1,Fa2}). In the language of projective geometry those results can be read as characterizations of (some) adjoint varieties as contact Fano manifolds whose groups of automorphisms have rank bigger than a certain bound. We reduce that bound so that it can be used to characterize most adjoint varieties (see Figure \ref{fig:fang}):

\begin{theorem}\label{thm:LBFang}
Let $(X,L)$ be a contact Fano manifold of dimension $2n+1$ with $\Pic X=\ZZ L$. Suppose
that the identity component $G$ of the group of contact automorphisms of $X$ is reductive of rank $r\geq \max(2,(n-3)/2)$. Then $(X,L)=(X_{G},L_{G})$, and $G$ is simple of one of the following types: $\DB_r$ $(r\geq 3)$, $\DD_r$ $(r\geq 4)$, $\DE_6$, $\DE_7$,  $\DF_4$, $\DG_2$.
\end{theorem}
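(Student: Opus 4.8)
The plan is to derive Theorem \ref{thm:LBFang} from the technical result Theorem \ref{thm:LB_isolated_pts}. The hypotheses on $X$ coincide, so the only thing to supply is the extra assumption of Theorem \ref{thm:LB_isolated_pts}, namely that the extremal fixed point components of the maximal torus $H\subset G$ are isolated points; the rank hypothesis will be used both to guarantee this and, at the very end, to eliminate the type $\DE_8$. Note first that $\max(2,(n-3)/2)\geq 2$, so $H$ has rank $r\geq 2$, and that the exceptional contact Fano manifolds $X\simeq\P^{2n+1}$ and $X\simeq\P(\Omega_{\P^{n+1}})$ are ruled out by the assumption $\Pic X=\ZZ L$ (Remark \ref{rem:contactfano}). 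Thus every standing hypothesis of Theorem \ref{thm:LB_isolated_pts} is available except, a priori, the one on extremal fixed points.

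The first step is to verify that the extremal fixed point components are isolated. By Corollary \ref{cor:poly_sections} and Lemma \ref{lem:polyroot} (whose hypothesis $\HH^0(Y,L_{|Y})\neq 0$ holds automatically, $L_{|Y}$ being ample) one has $\bigtriangleup(X,L,H)=\Gamma(X,L,H)=\bigtriangleup(G)$, so the weight $\mu_L(Y)$ of an extremal component $Y$ is a vertex of the root polytope, i.e.\ a long root $\beta$; in particular $\mu_L(Y)\neq 0$, so $Y$ is isotropic by Lemma \ref{lem:sinksourceiso}. To force $\dim Y=0$ I would exploit the $H$-equivariant restriction $\HH^0(X,L)\to\HH^0(Y,L_{|Y})$: since $H$ acts trivially on $Y$ and with weight $\beta$ on $L_{|Y}$, the target is pure of weight $\beta$, and under the identification $\HH^0(X,L)\simeq\g$ of Lemma \ref{lem:AutCAut} the only weight space that can surject onto it is the one-dimensional root space $\g_\beta$; as $L_{|Y}$ is ample this would make $Y$ a point.

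I expect this dimension bound to be the main obstacle, precisely because the surjectivity of the restriction onto $\HH^0(Y,L_{|Y})$ is delicate for a positive-dimensional $Y$. If that fails, the estimate must instead be read off from the combinatorics of the compass at $Y$, combining Lemmas \ref{lem:compass_extremalpoints} and \ref{lem:compass_prop2} with the contact symmetry of Lemma \ref{rem:symweights}; and it is exactly here that the rank hypothesis should intervene. The mechanism I would aim for is that a positive-dimensional isotropic extremal component forces $\dim X=2n+1$ to be large relative to $r$ — concretely $n>2r+3$ — which is incompatible with $r\geq(n-3)/2$. Establishing this incompatibility is the heart of the reduction.

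Once the extremal components are known to be isolated, Theorem \ref{thm:LB_isolated_pts} applies and yields $(X,L)\simeq(X_G,L_G)$ with $G$ simple of type $\DB_r$, $\DD_r$, $\DE_6$, $\DE_7$, $\DE_8$, $\DF_4$ or $\DG_2$ (the types $\DA_r$ with $r\geq 3$ and $\DC$ being already discarded inside that theorem, cf.\ Lemma \ref{lem:noCtype}). It remains to eliminate $\DE_8$. Computing the dimension of each adjoint variety so that $2n+1=\dim X_G$ gives the pairs $(r,n)$ equal to $(r,2r-3)$ for $\DB_r$, $(r,2r-4)$ for $\DD_r$, and $(6,10),(7,16),(8,28),(4,7),(2,2)$ for $\DE_6,\DE_7,\DE_8,\DF_4,\DG_2$ respectively. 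In each case other than $\DE_8$ one checks directly that $r\geq\max(2,(n-3)/2)$, whereas for $\DE_8$ one has $r=8<25/2=(n-3)/2$. Hence $(X,L)=(X_{\DE_8},L_{\DE_8})$ would violate the standing hypothesis $r\geq\max(2,(n-3)/2)$, so $\DE_8$ cannot occur, giving the asserted list and completing the proof.
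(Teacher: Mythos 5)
Your overall architecture is the right one — reduce to Theorem \ref{thm:LB_isolated_pts} by showing the extremal $H$-fixed components are isolated, then discard $\DE_8$ by the numerical check $r=8<(28-3)/2$ — and your treatment of the $\DE_8$ exclusion matches the paper (cf.\ Remark \ref{rem:hypothesis_rank}). But the central step, ruling out positive-dimensional extremal components, is not actually carried out, and two of the claims you lean on are incorrect. First, ``$\HH^0(Y,L_{|Y})\neq 0$ holds automatically, $L_{|Y}$ being ample'' is false: an ample line bundle on a positive-dimensional projective variety can have no sections at all (a degree-one line bundle on a genus-two curve, say), and establishing nonvanishing here is precisely part of the work. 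Second, even granting that the $H$-equivariant restriction $\HH^0(X,L)\to\HH^0(Y,L_{|Y})$ identifies the target with the one-dimensional root space $\g_\beta$ (which is \cite[Lemma 3.6]{BWW}, used in Lemma \ref{lem:polyroot}), the conclusion ``$\dim\HH^0(Y,L_{|Y})\le 1$ and $L_{|Y}$ ample, hence $Y$ is a point'' does not follow: positive-dimensional polarized varieties with $h^0=1$ exist, so one must prove that a positive-dimensional extremal component would necessarily have $\dim\HH^0(Y,L_{|Y})>1$. You correctly sense that this is the main obstacle, but the mechanism you propose (a direct numerical incompatibility $n>2r+3$ read off from the compass) is not established and is not how the contradiction arises.

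The paper's argument for this step is Lemma \ref{lem:extremalh0}. One takes a full flag of faces of $\bigtriangleup(X,L,H)$ through the vertex $\mu_L(Y)$, producing a chain of subtori $H=H^0\supsetneq\cdots\supsetneq H^{r-1}$ and a strictly increasing chain of fixed-point components $Y=Y^0\subsetneq\cdots\subsetneq Y^{r-1}$ with $Y^{r-1}$ isotropic, hence of dimension at most $n$. If some step of the chain has codimension one, Wahl's theorem (Lemma \ref{lem:wahl}) forces $Y^i\simeq\P^m$ and then trivially $\dim\HH^0(Y,L_{|Y})>1$; otherwise every step has codimension at least two, so $n-\dim Y\geq 2(r-1)$, and the rank hypothesis $r\geq(n-3)/2$ yields $\dim Y\leq 5$, where the result of H\"oring--\'Smiech on Fano fivefolds (via \cite[Lemma 6.3]{RW}) gives $\dim\HH^0(Y,L_{|Y})>1$. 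This contradicts the multiplicity-one computation and forces $Y$ to be a point. So the rank hypothesis is used not to produce an outright dimension contradiction but to push $\dim Y$ into the range where auxiliary nonvanishing theorems apply; without supplying this (or an equivalent) argument, your proof has a genuine gap.
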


\begin{remark} \label{rem:hypothesis_rank}
Note that the adjoint variety  of type $\DE_8$ does not satisfy  the assumptions, as in this case $\dim
X_G=57$, $n=28$, $r=8$. By Remark \ref{rem:red_dim} we may assume $n\geq 5$. On the other hand the hypothesis $r\geq 2$ is needed only for the cases $n=5,6$, in which the statement has been proved in \cite[Theorem 6.2]{RW}. Without loss of generality, we may then assume that $n\geq 7$. 
\end{remark}

The Theorem will follow from Theorem \ref{thm:LB_isolated_pts} by showing that the extremal fixed points of the action of a maximal torus $H\subset G$ are isolated. In order to do that, we will use some preliminary results. 

\begin{lemma}\label{lem:wahl}
Let $X$ be a projective manifold of dimension $d \ge 2$ with $\Pic X\simeq\ZZ$. If there exists
an action of $\CC^*$ with a fixed point component of codimension $1$ then
$X\simeq\PP^d$ and the fixed point component in question is a hyperplane.
\end{lemma}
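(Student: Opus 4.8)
The plan is to deduce the statement from Wahl's cohomological characterization of projective space, which says that if $X$ is a projective manifold of dimension $d$ and $\HH^0(X,T_X\otimes\cA^{-1})\neq 0$ for some ample line bundle $\cA$, then $(X,\cA)\simeq(\PP^d,\cO_{\PP^d}(1))$. The whole task therefore reduces to producing a nonzero global vector field on $X$ that vanishes along an ample divisor, and $Y$ will be the natural candidate for that divisor.

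First I would let $V\in\HH^0(X,T_X)$ be the fundamental vector field generating the given $\CC^*$-action; it is nonzero because the action is nontrivial. Write $Y$ for the codimension-one fixed point component, which is a smooth prime divisor. The key observation is that $V$ vanishes at every fixed point when regarded as a tangent vector: for $p\in Y$ the orbit map $t\mapsto t\cdot p$ is constant, so its derivative $V(p)\in T_{X,p}$ is zero. Hence $V|_Y=0$ as a section of $T_X|_Y$.

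Next I would use this vanishing to lift $V$. Tensoring the sequence $0\to\cO_X(-Y)\to\cO_X\to\cO_Y\to 0$ with the locally free sheaf $T_X$ gives $0\to T_X(-Y)\to T_X\to T_X|_Y\to 0$, so a section of $T_X$ lies in the image of $\HH^0(X,T_X(-Y))$ exactly when its restriction to $Y$ is zero. By the previous step this holds for $V$, so $V$ defines $V'\in\HH^0(X,T_X(-Y))$, and $V'\neq 0$ because $\HH^0(X,T_X(-Y))\to\HH^0(X,T_X)$ is injective and sends $V'$ to $V\neq 0$. Finally, since $\Pic X\simeq\ZZ$ is generated by an ample class and $Y$ is a nonzero effective divisor, $\cO_X(Y)$ is ample; thus $\HH^0(X,T_X\otimes\cO_X(Y)^{-1})\neq 0$ with $\cO_X(Y)$ ample, and Wahl's theorem yields $X\simeq\PP^d$ together with $\cO_X(Y)\simeq\cO_{\PP^d}(1)$, i.e.\ $Y$ is a hyperplane.

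I do not expect a serious obstacle here. The only point requiring care is the lifting $V\mapsto V'$, where one must check that the \emph{pointwise} vanishing of $V$ along $Y$ is precisely the condition for $V$ to come from the subsheaf $T_X(-Y)\subset T_X$, and that the lift remains nonzero; both are immediate from the displayed exact sequence. Everything else is bookkeeping: that $\Pic X\simeq\ZZ$ forces $\cO_X(Y)$ to be ample, and the invocation of Wahl's characterization.
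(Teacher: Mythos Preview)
Your proof is correct and follows essentially the same approach as the paper: produce the fundamental vector field of the $\CC^*$-action, observe that it vanishes along the codimension-one fixed component $Y$, hence gives a nonzero section of $T_X\otimes\cO_X(-Y)$, and conclude via Wahl's theorem since $\cO_X(Y)$ is ample. The paper's proof is simply a terser version of yours, omitting the explicit exact-sequence justification for the lift that you spell out.
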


\begin{proof} 
Let $D$ denote the codimension one fixed component. The action of $\CC^*$ determines a non-zero vector field which vanishes at the fixed points of the action, hence it gives a nonzero section in $\HH^0(X,T_X\otimes\cO_X(-D))$. Since $D$ is effective and $\Pic X\simeq\ZZ$, then $D$ is ample and the result follows by \cite[Theorem 1]{Wahl}. 
\end{proof}

Following \cite[Corollary 3.8]{BWW} the next Lemma allows us to produce sections of $L$ on $X$ by extending them from the extremal fixed point components of the action of $H$, and to prove the equality of the polytopes $\bigtriangleup(X,L,H)$ and $\Gamma(X,L,H)$: 

\begin{lemma}\label{lem:extremalh0}
Let $(X,L)$ be a contact Fano manifold of dimension $2n+1$ with $\Pic X=\ZZ L$. Suppose
that the identity component of the group of contact automorphisms $G$ is reductive with maximal torus
$H$ of rank $r\geq (n-3)/2>0$. Then, for every extremal fixed point
 component $Y$ of positive dimension we have $\dim \HH^0(Y,L_{|Y}) > 1$. Furthermore, $\Gamma(X,L,H)=\bigtriangleup(X,L,H)=\bigtriangleup(G)$. 
\end{lemma}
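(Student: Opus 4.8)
The plan is to prove the two assertions separately, letting the section estimate carry the real content and deducing the polytope equality formally.

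For the polytope equality, Corollary \ref{cor:poly_sections} already gives $\Gamma(X,L,H)=\bigtriangleup(G)$, and the inclusion $\Gamma(X,L,H)\subseteq\bigtriangleup(X,L,H)$ holds because $L$ is ample. To get the reverse inclusion I would invoke Lemma \ref{lem:polyroot}, whose only hypothesis is that $\HH^0(Y,L_{|Y})\neq 0$ for every extremal fixed point component $Y$. When $Y$ is an isolated point this is automatic, since then $L_{|Y}\simeq\C$; when $Y$ is positive-dimensional it is a consequence of the first assertion. Thus the entire statement reduces to proving $\dim\HH^0(Y,L_{|Y})>1$ for positive-dimensional extremal $Y$.

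So fix such a $Y$, of dimension $d\ge 1$. First note that $\mu_L(Y)\neq 0$: the origin lies in the (relative) interior of the root polytope $\bigtriangleup(G)=\Gamma(X,L,H)\subseteq\bigtriangleup(X,L,H)$ and hence cannot be a vertex. Therefore Lemma \ref{lem:sinksourceiso} applies and tells us that $Y$ is isotropic and that $\Omega_Y\otimes L_{|Y}$ is a direct summand of $\cN_{Y|X}$; dualizing, $T_Y$ is a quotient of $\cN_{Y|X}^{\vee}\otimes L_{|Y}$. Next I would realize $Y$ as an extremal component of a one-parameter subgroup: choosing a cocharacter $\lambda$ of $H$ with $\langle\lambda,\mu_L(Y)\rangle$ strictly maximal among the weights of the fixed components, the fixed locus of $\lambda$ coincides with $X^H$ and $Y$ is its source. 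Since $X$ is Fano, hence rationally connected, and the source of a $\C^*$-action on a rationally connected manifold is again rationally connected (one of its Bia\l ynicki-Birula cells is a dense affine bundle over $Y$), we obtain $\HH^i(Y,\cO_Y)=0$ for $i>0$ and $\chi(\cO_Y)=1$.

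To finish I would show that $Y$ is Fano, or at least that $-K_Y$ is nef. This is where the positivity of the twisted conormal bundle $\cN_{Y|X}^{\vee}\otimes L_{|Y}$ enters: granting that this bundle is nef --- the analogue, for a general isotropic extremal component, of the Atiyah description of Proposition \ref{prop:Atiyah} and of the semiampleness hypothesis in Theorem \ref{thm:uniqueBW2} --- the quotient $T_Y$ is nef, so $-K_Y=\det T_Y$ is nef and $L_{|Y}-K_Y$ is ample. Kawamata--Viehweg vanishing then yields $\HH^i(Y,L_{|Y})=0$ for $i>0$, so $\dim\HH^0(Y,L_{|Y})=\chi(Y,L_{|Y})$, and a Riemann--Roch computation using $\chi(\cO_Y)=1$, the ampleness of $L_{|Y}$ and the nefness of $-K_Y$ gives $\chi(Y,L_{|Y})\ge 2$, as wanted.

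The main obstacle is precisely the positivity step of the last paragraph, namely establishing that the extremal component $Y$ is Fano. I expect this to be the place where the rank hypothesis $r\ge (n-3)/2$ is genuinely used: it should allow one to downgrade the action to a suitable subtorus and identify $Y$, via the classifications of small-bandwidth actions in Theorems \ref{thm:bw3} and \ref{thm:uniqueBW2}, with one of the rational homogeneous varieties appearing there, on which $L$ restricts to (a power of) the ample generator and the inequality $\dim\HH^0(Y,L_{|Y})>1$ is transparent. Once such an identification is available the Fano-ness, and hence the whole argument, becomes immediate.
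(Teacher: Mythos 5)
Your reduction of the polytope equality to the nonvanishing statement via Lemma \ref{lem:polyroot} and Corollary \ref{cor:poly_sections} is exactly what the paper does, and is fine. The problem is that the substance of the lemma, $\dim\HH^0(Y,L_{|Y})>1$ for a positive-dimensional extremal component, is not actually proved: you defer the decisive positivity step and your guess at how to fill it is not viable. Identifying $Y$ through the small-bandwidth classifications (Theorems \ref{thm:bw3} and \ref{thm:uniqueBW2}) is circular in the logic of the paper: those theorems are applied only inside the proof of Theorem \ref{thm:LB_isolated_pts}, whose hypothesis is precisely that the extremal fixed components are isolated, and the present lemma is what Theorem \ref{thm:LBFang} uses to establish that hypothesis. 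Moreover, even the conditional part of your argument has a gap: granting that $-K_Y$ is nef and $L_{|Y}$ is ample, Kawamata--Viehweg gives $\HH^0=\chi$, but the inequality $\chi(Y,L_{|Y})\geq 2$ is \emph{not} a formal Riemann--Roch consequence of $\chi(\cO_Y)=1$ and these positivity hypotheses; nonvanishing of this kind for Fano manifolds is a genuinely hard problem, known only in low dimensions, which is exactly why the paper must first bound $\dim Y$.

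The paper's actual mechanism is a dimension count, and this is where the rank hypothesis enters. Choosing a full flag of faces of $\bigtriangleup(X,L,H)$ through the vertex $\mu_L(Y)$ and quotienting $\Mo(H)$ accordingly produces a chain of subtori $H=H^0\supsetneq\dots\supsetneq H^{r-1}$ and a \emph{strictly} increasing chain of fixed-point components $Y=Y^0\subsetneq Y^1\subsetneq\dots\subsetneq Y^{r-1}$, with $Y^{r-1}$ extremal for a $\C^*$-action and hence isotropic by Lemma \ref{lem:sinksourceiso}, so $\dim Y^{r-1}\leq n$. If some step of the chain has codimension one, Lemma \ref{lem:wahl} (Wahl's theorem) forces $Y^i\simeq\P^m$ and the conclusion is immediate; otherwise every step has codimension at least two, so $n-\dim Y\geq 2(r-1)$, and $r\geq (n-3)/2$ yields $\dim Y\leq 5$. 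Then $Y$ is a Fano manifold of Picard number one (\cite[Lemma 2.9(1)]{ORSW}) of dimension at most five, and the cited nonvanishing results (\cite[Lemma 6.3]{RW}, \cite[Corollary 1.3]{HoS}) give $\dim\HH^0(Y,L_{|Y})>1$. Without this dimension reduction, or some substitute for it, your argument does not close.
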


\begin{proof} 
As in the proof of \cite[Proposition 3.9]{BWW} we consider  a full flag of faces of
$\bigtriangleup(X,L,H)$ containing the vertex determined by the weight of $L|_Y$. Quotienting $\Mo(H)$ by the sublattices generated by the lattice points contained in these faces, we obtain a sequence of subtori:
$$H =: H^0 \supsetneq H^1 \supsetneq \dots \supsetneq H^{r-1},$$
where $H^i$ is of dimension $r-i$, and a sequence of
smooth irreducible subvarieties $Y=:Y^0\subsetneq 
Y^1\subsetneq\cdots\subsetneq Y^{r-1}$ satisfying that $Y^i$ is a fixed point component for the action of $H^i$; note that this sequence is strictly increasing by Lemma \ref{lem:downgrading}. Since each $Y^i$ is invariant by $H^j$, for $j\leq i$, then we have an action of $H^{i-1}/H^i\simeq\C^*$ on $Y^i$, with $Y^{i-1}$ as a fixed point component.

In particular $Y^{r-1}$ is associated to a facet of
$\bigtriangleup(X,L,H)$, so it is an extremal fixed point component for the action of $H^{r-1}\simeq\C^*$; and the weight of the action of $H^{r-1}$ on $L$ at the fixed component $Y^{r-1}$ is different from zero. Applying Lemma \ref{lem:sinksourceiso}, we conclude that $Y^{r-1}$ is an isotropic submanifold of $X$; therefore $\dim Y^{r-1}\leq n$. 

Assume first  that, for some $i$, we have $\dim Y^i-\dim Y^{i-1}=1$. Then, by Lemma \ref{lem:wahl},
one has $Y^i \simeq \PP^m$ for some $m$. So $Y^0$ is a positive dimensional subvariety of $\P^m$ and clearly $\dim \HH^0(Y^0,L|_{Y^0})>1$. 

Assume now that $\dim Y^i-\dim Y^{i-1}\geq 2$ for every $i$. Then
$$n-\dim Y^0 \geq \dim Y^{r-1}-\dim Y^0=\sum_{i=1}^{r-1}(\dim Y^i-\dim Y^{i-1})\geq 2(r-1),$$
 which yields, by our assumption on $r$, that $\dim Y^0\leq 5$; by \cite[Lemma 2.9 (1)]{ORSW} $Y^0$ is a Fano manifold with Picard number $1$ and \cite[Lemma 6.3]{RW} (see also \cite[Corollary~1.3]{HoS}) gives $\dim \HH^0(Y^0,L|_{Y^0})>1$.

For the second part of the statement, we note that $\HH^0(Y,L_{|Y})\ne 0$ holds trivially for every zero dimensional extremal fixed point component, hence the conclusion follows by Lemma \ref{lem:polyroot}.
\end{proof} 

With all the above materials at hand we may finally prove our main statement.  

\begin{proof} [Proof of Theorem \ref{thm:LBFang}] In view of Remark \ref{rem:hypothesis_rank}, we  assume that $n\geq 7$,  so that the rank of $G$ is bigger than or equal to $(n-3)/2$. By Lemma \ref{lem:extremalh0} one has $\Gamma(X,L,H)=\bigtriangleup(X,L,H)=\bigtriangleup(G)$. Moreover, because of Lemma \ref{lem:AutCAut} we deduce that the multiplicities of the weights at the  vertices of $\Gamma(X,L,H)$ are the same as for the adjoint representation, thus equal to $1$, that is $\HH^0(Y,L_{|Y})=1$ for every extremal fixed point component.  By Lemma \ref{lem:extremalh0} the extremal fixed components must be isolated points, and we may conclude applying Theorem \ref{thm:LB_isolated_pts}.
\end{proof}

%%%%%%%%%%%%%%%%%%%%%%%

\end{document}